\theoremstyle{plain}
\newtheorem{theorem}{Theorem}[section]
\newtheorem{proposition}[theorem]{Proposition}
\newtheorem{assumption}[theorem]{Assumption}
\theoremstyle{definition}
\theoremstyle{remark}
\newtheorem{remark}[theorem]{Remark}
\DeclareMathOperator{\tr}{tr}
\DeclareMathOperator{\im}{im}
\DeclareMathOperator{\spn}{span}
\DeclareMathOperator{\ESS}{ESS}
\DeclareMathOperator{\Unif}{Unif}
\title{Guided filtering and smoothing for infinite-dimensional diffusions}
\author{Thorben Pieper-Sethmacher$^{1,*}$} 
\email{\texttt{thorbenpieper@tudelft.nl}}
\author{
Daniele Avitabile$^{2,3}$}
\email{\texttt{d.avitabile@vu.nl}}
\author{Frank van der Meulen$^2$}
\email{\texttt{f.h.van.der.meulen@vu.nl}}
\address{$^1$ Delft Institute for Applied Mathematics \\
  Delft University of Technology \\
    Mekelweg 4, 2628 CD Delft \\
    The Netherlands.}
\address{$^2$ Department of Mathematics \\ Vrije Universiteit Amsterdam \\ De Boelelaan 1111, 1081 HV Amsterdam \\ The Netherlands.}
\address{$^3$ MathNeuro Team \\ Inria branch of the University of Montpellier \\ 860
rue Saint-Priest 34095 Montpellier Cedex 5 \\ France.}
\thanks{$^*$Corresponding author: Thorben Pieper-Sethmacher,
\texttt{thorbenpieper@tudelft.nl}.}
\begin{document}

\begin{abstract}
We consider the filtering and smoothing problems for an infinite-dimensional
diffusion process $X$, observed through a finite-dimensional representation at
discrete points in time. At the heart of our proposed methodology lies the
construction of a path measure, termed the guided distribution of $X$, that is
absolutely continuous with respect to the law of $X$, conditioned on the
observations. We show that this distribution can be incorporated as a potent proposal
measure for both sequential Monte Carlo as well as Markov Chain Monte Carlo schemes
to tackle the filtering and smoothing problems respectively. In the offline setting,
we extend our approach to incorporate parameter estimation of unknown model
parameters. The proposed methodology is numerically illustrated in a case study for a stochastic neural field equation. 
\end{abstract}
\keywords{Data assimilation; Doob's h-transform; Exponential change of measure;
  Guided particle filter; Infinite-dimensional diffusions; Markov chain Monte Carlo;
  Parameter estimation; Smoothing; Stochastic Amari equation; Stochastic partial
differential equations}

\maketitle

\section{Introduction}
Enriching model predictions of a stochastic dynamical system using
partial and noisy observations is an attractive idea, with numerous applications in
the physical, engineering, and biological sciences
\cite{smithUncertaintyQuantificationTheory2014, 
 Law2015,
nakamuraInverseModelingIntroduction2015,
Reich2015,
Sanz-alonsoInverseProblemsData2023}. From a mathematical and statistical viewpoint, such \textit{data assimilation} methods constitute inverse problems in which the unknown may be the state of the system, its model parameters, or a combination thereof. If the underlying system is spatially independent, it is governed by a \textit{stochastic differential equation} (SDE) in a finite-dimensional state space, with observations pertaining the state variables at selected observation times. 
The mathematical foundations for these problems have been developed over the past decade, and are currently available in several texts \cite{bain2009fundamentals, Law2015, Reich2015, Sanz-alonsoInverseProblemsData2023}.

The present paper concerns data assimilation for systems governed by \textit{infinite-dimensional SDEs}. Such systems include spatiotemporal processes modelled by \textit{stochastic partial differential equations (SPDEs)} as well as spatially extended SDEs in which spatial coupling is induced from nonlocal interactions. This setting poses technical challenges in view of the dimensionality of the state space, and fewer studies in this area are currently available. We introduce a unified framework for two classical data assimilation tasks -- \textit{filtering} and \textit{smoothing} -- as well as \textit{parameter estimation} for SDEs in Hilbert spaces. Our framework differs from existing methods by treating filtering and smoothing in a rigorous abstract form, taking an infinite-dimensional dynamical system approach to the data assimilation problem. 
This aligns with the ``discretisation-free'' formulation of inverse problems as advocated in \cite{Stuart2010Inverse} and implies that, contrary to approaches where the SDE is first discretised, our methods are well defined under increasing mesh refinements.
 We demonstrate our methodology in a case study for the stochastic \textit{Wilson--Cowan--Amari} equation, estimating parameters that were previously assumed to be known in the literature.

\subsection{Setup}\label{subsec:setup}
Consider a system governed by an infinite-dimensional diffusion equation of the form
\begin{equation}\label{eq: dXt} 
\begin{cases}
\df X_t &= \left[ A X_t + F(t, X_t) \right] \df t + Q^{\frac12} \df W_t, \quad  t \geq 0,\\ 
 \quad X_0 &\sim \mu_0.
\end{cases}
\end{equation}
Here, $A$ denotes the generator of a strongly continuous semigroup $(S_t)_{t \geq 0}$ on a Hilbert space $H$, whereas $F$ is a continuous nonlinearity and $Q$ a symmetric, positive operator of trace class on $H$. The process $(W_t)_{t \geq 0}$ is a cylindrical Wiener process on $H$, defined on a stochastic basis $(\Omega, \calF, (\calF_t)_{t \geq 0},\P)$ and $\mu_0$ denotes a Borel measure on $H$.

We assume that \cref{eq: dXt} admits a unique mild solution $X$ satisfying
\begin{align}
\label{eq: Xt}
X_t &= S_t X_0 + \int_0^t S_{t-s} F(s, X_s) \df s + \int_0^t S_{t-s} Q^{\frac12} \df W_s.
\end{align}
The process $X$ is an $H$-valued, predictable Markov process with transition kernel $(\mu_{s,t})_{s \leq t}$ defined by $\mu_{s,t}(x,B) := \P(X_t \in B \mid X_s = x)$ for all Borel sets $B \subset H$ and $x \in H$.

Suppose at discrete observation times $0 < t_1 < t_2 < \ldots < t_n = T$, we are given \textit{finite-dimensional observations} $y_i$ through realisations $Y_i = y_i$ of 
\begin{align}
\label{eq: def_Y}
    Y_i \mid X_{\ti} &\sim k_i(X_{\ti}, \cdot), \quad i = 1, \ldots, n.
\end{align}
Here, $k_i(X_{\ti}, \cdot)$ denotes a conditional density on $\R^{m_i}$, where $m_i$ is the dimension of the observation at time $t_i$. 
For the sake of convenience, we assume from now on that
\begin{align}
\label{eq: def_Y2}
    Y_i \mid X_{\ti} \sim f(\cdot; L X_{\ti}, \Sigma), \quad i = 1, \ldots, n,
\end{align}
where $L: H \to \R^m$ is a bounded linear operator, termed the \textit{observation operator}, $\Sigma$ is a positive-definite matrix and $f(\cdot; Lx, \Sigma)$ is the Gaussian density with mean $Lx$ and covariance $\Sigma$.
However, our methods generalise to non-Gaussian observation densities and we will discuss how to adapt to such cases towards the end of the paper.

Based on the described setup, we address three estimation problems. 
Firstly, we are concerned with state estimation of the unobserved, latent path $X$. 
Here, we consider both the online problem of estimating the filtering distribution 
\begin{align}
\label{eq: def_filtering}
    \P(X_{\ti} \in B \mid Y_1, \ldots, Y_i), \quad i = 1, \ldots, n,
\end{align}
and as well as the offline problem of estimating the smoothing distribution
\begin{align}
\label{eq: def_smoothing}
    \P(X_{t} \in B \mid Y_1, \ldots, Y_n), \quad t \in [0,T],
\end{align}
for any Borel set $B \subset H$.
Moreover, if \cref{eq: dXt} is parametrised by some unknown model parameter $\theta \in \R^p$, we address parameter estimation of $\theta$ in the offline setting.

\subsection{Related Work}
In the context of SPDEs, there has been a growing interest in statistical inference for infinite-dimensional SDEs. As the survey \cite{cialenco2018statistical} shows, the literature mostly concerns models admitting closed-form parameter estimators studied under
increasing-information asymptotics. 
Additionally, results for nonlinear SPDEs are sparse. In the frequentist framework, inference of a drift parameter for the two-dimensional stochastic Navier-Stokes
equation has been considered in \cite{Cialenco11Parameter} and for semi-linear SPDEs
in \cite{Stannat20Drift}, \cite{Altmeyer2023Parameter}, and
\cite{Cialenco2024Statistical}. 
Estimation concerning the nonlinearity has been treated in the parametric case in 
 \cite{Gaudlitz2023Estimation} and in the non-parametric case in \cite{Gaudlitz2023Non, HildebrandtTrabs2023Nonparametric}.
 
In contrast to the frequentist statistical perspective, ``results on filtering in the context of SPDEs are rather scarce" and ``Bayesian inference for SPDEs is another area with few existing results" (\cite{cialenco2018statistical}, page 326). 
Both of these problems are commonly found in the literature for spatiotemporal statistics. Here, formulating so-called ‘physically inspired spatiotemporal models’ as solutions to linear SPDEs has been proposed as a more computationally efficient approach compared to conventional methods based on Gaussian process regression, see \cite{Lindgren2011Explicit}, \cite{Sarkka2013Spatiotemporal}, and \cite{sigrist2015spde}. A recent introduction to this framework can be found in Chapter 5 of \cite{Wikle2019Spatio}.
However, the results in this direction are limited to linear SPDEs and do not generalise to nonlinear and non-Gaussian signals. 

The few works that address data assimilation for nonlinear SPDEs mostly concern the filtering problem. 
In \cite{Llopis2018Particle}, a guided particle filter, including tempering and resample-move steps, has been introduced to estimate the filtering distribution of a two-dimensional stochastic Navier-Stokes signal. Moreover, a similar approach was used in \cite{Lang2022Bayesian} in the context of filtering for the stochastic rotating shallow water (SRSW) model and in \cite{Cotter2025camassaholm} for the Camassa-Holm equations. Likewise, our approach to the filtering problem expands on the methodology proposed in \cite{Llopis2018Particle}. 

Smoothing and Bayesian parameter inference for
infinite-dimensional diffusions in the generality of our setup has, to the best of
our knowledge, not been considered previously in the literature.
In the special case of a single, noiseless observation and known model parameters, smoothing is reduced to sampling of an infinite-dimensional diffusion bridge. This has been investigated for strong solutions to infinite-dimensional SDEs in \cite{Yang2024Simulating} and for mild solutions in \cite{Piepersethmacher2025Simulation}.

As stated above, our paper addresses also processes that evolve according to spatially extended SDEs; these arise naturally when the dynamical system of interest is inherently nonlocal,
as in models coming from neuroscience
\cite{izhikevichDynamicalSystemsNeuroscience2006,
ermentroutMathematicalFoundationsNeuroscience2010,
bressloffWavesNeuralMedia2014, coombesNeurodynamicsAppliedMathematics2023},
ecology~\cite{volpertEllipticPartialDifferential2014},
and solid
mechanics~\cite{duNonlocalModelingAnalysis2019,deliaNonlocalIntegralEquation2024}. 
The infinite-dimensional SDE that we use as an application of our theory is a stochastic version of the
Wilson--Cowan--Amari equation~\cite{wilson1973mathematical,amari1977dynamics}, also
known as \textit{neural field equation}. In this model the cortex is represented as a
continuum, and nonlocality manifests itself through a nonlinear, nonlocal operator
collecting global neuronal activity. There are
multiple reasons to target this system: (i) it is a tractable example of a 
parabolic, nonlocal, and \textit{non-diffusive} system, which sets it apart from the
other applications mentioned above, modelling nonlocal diffusion or nonlocal
reaction; (ii) in spite of its simplicity, it shares the salient mathematical features
of more sophisticated neuroscience models; (iii) it is capable of supporting many
spatiotemporal patterns observed in healthy and pathological brains, and crucially
(iv) the mathematical neuroscience community has begun studying data assimilation
problems for these types of systems, prompted by available brain datasets
collected in the past decades.

Wilson--Cowan--Amari models subject to stochastic forcing have been introduced more
recently than their PDE counterparts, with literature focusing on their analysis
\cite{
kilpatrick2013wandering,
kilpatrickPulseBifurcationsStochastic2014,
kuehnLargeDeviationsNonlocal2014,
faugerasStochasticNeuralField2015,
MacLaurin.2020}, and numerical simulation
\cite{limaNumericalInvestigationStochastic2019,limaNumericalSolutionStochastic2022a}.
Further, inverse
problems for neural fields have been studied in the context of kernel reconstructions
\cite{potthastInverseProblemsNeural2009, nakamuraInverseModelingIntroduction2015,
alswaihliKernelReconstructionDelayed2018}, and more specifically in data assimilation
using Kalman filters with in-vitro
\cite{Schiff2007Kalman,sauerDataAssimilationHeterogeneous2009} and synthetic 
\cite{kulikovaDatadrivenParameterEstimation2023} data. 
In contrast to the existing literature, we treat the data assimilation problem in continuous space and time, rather than relying on a spatiotemporal discretisation of the neural field model.

 \vspace{\baselineskip}

\subsection{Approach}
\label{sec: approach}
Our approach relies on the construction of \textit{guided processes}, a tractable class of processes obtained by steering the solution $X$ based on the observed data. For SDEs with Euclidean state-spaces, the idea of guided processes goes to back to \cite{Clarke} and \cite{DelyonHu}, the terminology being first introduced in \cite{PapaspiliopoulosRoberts2012}. A related technique is known as \textit{nudging} in the data assimilation literature. We comment on this connection in Appendix \ref{app: C}.

Here, we build specifically upon the work on guided processes as defined in \cite{schauer2017guided}, \cite{mider2021continuous} and \cite{vdMeulenSchauerSommer2025}. To lift this approach to the infinite-dimensional setting, we rely heavily on the exponential changes of measure studied in \cite{Piepersethmacher2025Class}.

Consider for now the case that $n=1$, i.e. we
observe one realisation $y$ of the random variable $Y \mid X_T \sim k(X_T, \cdot)$.
In this simplified case, estimating the conditional distribution of $X$ given $Y = y$
solves both the filtering as well as the smoothing problem. 

It is well-known that this conditional distribution can be derived by a change of measure known as \textit{Doob's h-transform} as follows. Defining for any fixed $y \in \R^{m}$ the likelihood function 
\begin{align*}
h(t,x) &= \int_H k(z,y) \mu_{t,T}(x, \df z), \quad t \in [0,T], x \in H,
\end{align*}
one shows that the process $(h(t,X_t))_{t \in [0,T]}$ is a non-negative $\P$-martingale and hence defines a unique change of measure $\P^h$ on $\calF_T$ by 
\begin{align*}
\df \P^h_{\mid \calF_T} &= \frac{h(T,X_T)}{C^h} \df \P_{\mid \calF_T},
\end{align*}
with $C^h := \E[h(0,X_0)]$ acting as a normalising constant. Under the new measure $\P^h$, the law $\calL^h(X)$ of $X$ is exactly the conditional distribution of $X$ given $Y = y$ in the sense that 
\begin{align*}
	\E^h[\varphi(X)]= \E[\varphi(X) \mid Y = y]
\end{align*} for any bounded and measurable function $\varphi$.
Moreover, under certain regularity conditions on $h$, it was shown in \cite{Piepersethmacher2025Class} that the change of measure $\P^h$ is of Girsanov type. The process $X$ under $\P^h$ is then a mild solution to 
\begin{align}
\label{eq: dXh}
\df X^h_t &= \left[ A X^h_t + F(t, X^h_t) + Q \Df_x \log h (t,X^h_t) \right] \df t + Q^{\frac12} \df W^h_t,
\end{align}
where $W^h$ is a $\P^h$-cylindrical Wiener process. 
\cref{eq: dXh} differs from the original dynamics in \cref{eq: dXt} only by an additional drift term involving the \textit{score} $\Df_x \log h (t,x)$ of the likelihood function $x \mapsto h(t,x)$ that steers the process $X^h$ into regions of high probability given the observation $Y = y$. 
With slight abuse of notation, we write $X^h$ for the process $X$ defined on the stochastic basis $(\Omega, \calF, (\calF_t)_{t \in [0,T]},\P^h)$ and call it the \textit{conditioned process}. Note that the existence of $\P^h$ and $X^h$ does not rely on the existence of a mild solution to Equation \eqref{eq: dXh}.

\vspace{\baselineskip}

In most cases, the transition kernel $\mu$ of $X$, and hence the function $h$, is intractable. This renders directly sampling the conditioned process infeasible, even if conditions for the well-posedness of Equation \eqref{eq: dXh} are satisfied. 
A natural approach to overcome this problem is to substitute the intractable function $h$ by a tractable function $g$ that `approximates' $h$ in some sense.
Besides being tractable, the function $g$ should
\begin{itemize}
    \item[(i)] be informed by the observed data in a way that resembles the original function $h$,
    \item[(ii)] be such that there exists a mild solution $X^g$ to the equation
  \begin{align}
  \label{eq: dX^g}
  	    \df X^g_t &= \left[ A X^g_t + F(t, X^g_t) + Q \Df_x \log g (t,X^g_t) \right] \df t + Q^{\frac12} \df W^g_t,
  \end{align}
    \item[(iii)] be such that the laws of $X^h$ and $X^g$ are absolutely continuous in path space with a tractable Radon-Nikodym (R.N.) derivative $\Phi$.
\end{itemize}
One can then obtain \textit{weighted samples} of the conditioned process $X^h$ by sampling the process $X^g$ and associating with each sample the weight $\Phi(X^g)$. 
Since $X^g$ is `guided' by the observation $Y = y$, we refer to it as the \textit{guided process} and its law as the \textit{guided distribution}.

\vspace{\baselineskip}

In our approach, we define $g$ by substituting the transition kernel $\mu$ in the definition of $h$ with a tractable transition kernel $\nu$ of an auxiliary process $Z$. If $Z$ is an Ornstein-Uhlenbeck process, we show that the function $g$ defines a change of measure on $\calF_T$ by 
\begin{align*}
\df \P^g_{\mid \calF_T} &= \frac{g(T,X_T)}{C^g} \exp \left(- \int_0^T \dfrac{\calK g}{g}(s,X_s) \, \df s \right) \df \P_{\mid \calF_T}
\end{align*}
such that $X$ under $\P^g$ satisfies Equation \eqref{eq: dX^g} with respect to a $\P^g$-cylindrical Wiener process $W^g$.
Here, $\calK$ denotes the infinitesimal generator of the original, unconditioned process $X$ and $C^g$ is another normalising constant. 

 The function $x \mapsto g(t,x)$ is to be interpreted as the likelihood function of $Z_t = x$ under the simplified dynamics of $Z$ and therefore satisfies condition (i). Moreover, condition (ii) is ensured by the Gaussian nature of $g(t,x)$ implying a global Lipschitz condition of the score function $\Df_x \log g(t,x)$.
 Lastly, by construction of $\P^h$ and $\P^g$, the laws of $X^g$ and $X^h$ are absolutely continuous in path space, hence satisfying the third condition imposed on $g$. 
 In \cite{Piepersethmacher2025Simulation} it was shown that this absolute continuity persists if the observation kernel degenerates to a Dirac measure in $Lx$, i.e. if $Y \mid X_T \sim \delta_{Lx}(\cdot)$. In other words, the guided distribution remains a valid importance sampling distribution for $X^h$ even under highly informative observations. 

\vspace{\baselineskip}

In this work we apply this basic idea of constructing the conditioned process $X^h$ as well as the guided process $X^g$ by a change of measure to derive solutions to both the filtering as well as the smoothing problem. 
For the filtering problem, we propose the law of $X^g$ as a natural candidate for the proposal distribution in a 
\textit{particle filter} (or \textit{sequential Monte Carlo}) approach.

On the other hand, to solve the smoothing problem we extend the changes of measure $\P^h$ and $\P^g$ to account for the complete set of observations $(Y_1,\ldots Y_n)$. 
This enables drawing weighted samples of $X^h$ which can then be passed onto \text{Markov chain Monte Carlo} or \textit{importance sampling} schemes to sample from the smoothing distribution of $X$.

\subsection{Outline}
In Section \ref{sec: changes_measure} we derive the two changes of measure that define the smoothing and guided distribution of $X$. In particular, we present two ways of efficiently computing the score $\Df_x \log g$ needed to sample from the guiding distribution and evaluate the corresponding weights.
The implementation of the guided distribution into a particle filtering scheme is discussed in Section \ref{sec: guided_particle_filter}. Section \ref{sec: guided_MCMC} derives a Markov Chain Monte Carlo algorithm that targets the smoothing distribution of $X$ and posterior of model parameters $\theta$.
Performance of the proposed methodology is illustrated in Section \ref{sec: amari} through an example based on the stochastic Amari equation.

\subsection{Frequently used notation}
We denote by $H$ a Hilbert space with inner product $\langle x,y \rangle$ and norm $|x| = \sqrt{ \langle x, x \rangle }$. 
The process $X$ denotes the mild solution to Equation \eqref{eq: dXt}, defined on the stochastic basis $(\Omega, \calF, (\calF_t)_{t \geq 0},\P)$, with Markov transition kernel $\mu = (\mu_{s,t})_{s \leq t}$. If $F$ is independent of $X$ and instead $F(t,X_t) = a_t$ for some predictable, $H$-valued process $(a_t)_t$, we write $Z$ in place of $X$ for the Ornstein-Uhlenbeck process that satisfies \eqref{eq: dXt} and $\nu = (\nu_{s,t})_{s \leq t}$ for its transition kernel.
\vspace{\baselineskip}

The kernel $k_i(x,\cdot)$ defines a conditional density on $\R^{m_i}$ for any $x \in H$. For most of the paper, we assume $k_i(x, \cdot) = k(x,\cdot)$ to be a Gaussian density on $\R^m$ with mean $Lx$ and covariance matrix $\Sigma$, where $L:H \to \R^m$ is a bounded linear observation operator. 
By $Y_i \in \R^{m_i}$ we denote the observations of $X_{\ti}$ with density $k_i(X_{\ti}, \cdot)$ at observation time $t_i$, with $Y = (Y_1,\ldots Y_n)$ defined as the joint vector of all observations. We refer to individual components of one observation vector $Y_i \in \R^{m_i}$ as measurements.
For realisations of $Y_i$, we write $y_i \in \R^{m_i}$. We shall also write $y_i^+ = (y_i,\ldots,y_n)$ for the vector of all non-past observations at time $t_i$. Moreover, $m_i^+$ denotes the dimension of $y_i^+$, i.e. $m_i^+ = \sum_{j=i}^n m_j$. For a function $f$, $f^+(t) := \lim_{\substack{s \downarrow t}} f(s)$ is defined as the right-sided limit of $f$ at $t$ whenever existent.

\vspace{\baselineskip}
By $C([0,T];H)$ we denote the space of continuous, $H$-valued functions endowed with the supremum norm and its Borel algebra. 
For the laws of $X$ and $Z$ on the Borel algebra of $C([0,T];H)$ we write $\calL(X)$ and $\calL(Z)$ respectively. 
With slight abuse of notation, we shall also write $X_{[\tim, \ti]}$ for the process $(X_t)_{t \in [\tim, \ti]}$ and $\calL(X_{[\tim,\ti]})$ for its law on $C([\tim,\ti];H)$.

The realised observations $y$ together with the transition kernels $\mu$ and $\nu$ define the functions $h$ and $g$. 
These define the changes of measure $\P^h$ and $\P^g$ under which the law of $X$ is, respectively, the smoothing and guided distribution given the observations $y$. We denote these by $\calL^h(X)$ and $\calL^g(X)$.

\section{Changes of measure}
\label{sec: changes_measure}

The aim of this section is to derive the two changes of measure $\P^h$ and $\P^g$ that respectively define the intractable smoothing distribution $\calL^h(X)$ and the tractable guided distribution $\calL^g(X)$ of $X$. In particular, the construction will show that 
\begin{itemize}
    \item[(i)] $\calL^h(X)$ and $\calL^g(X)$ are absolutely continuous with Radon-Nikodym derivative
\begin{align*}
    \Phi(X) := \dfrac{\df \calL^h(X)}{\df \calL^g(X)}(X) = \frac{C^g}{C^h} \exp \left(  \int_0^T \langle F(s,X_s), G(s,X_s) \rangle \df s \right),
\end{align*}
where $C^g$ and $C^h$ are normalising constants and $G(t,x) = \Df_x \log g(t,x )$ is a tractable function,
    \item[(ii)] the distribution $\calL^g(X)$ can be sampled by numerically solving 
    \begin{align*}
        \df X^g_t &= \left[ A X^g_t + F(t, X^g_t) + Q G (t,X^g_t) \right] \df t + Q^{\frac12} \df W^g_t.
    \end{align*}
\end{itemize}
Moreover, we derive two ways of efficiently computing $G$ needed to sample from $X^g$ and evaluate the corresponding weights. 
The proofs of the results in this section can be found in Appendix \ref{app: A}.

\subsection{The smoothing distribution}
\label{sec: smoothing_measure}
We start by constructing the smoothing distribution of $X$ given $Y_1 = y_1, \ldots, Y_n = y_n$. We do so by defining a measure $\P^h$ on $\calF_T$ such that 
\begin{align}
\label{eq: smoothingEh}
    \E^h \left[ \varphi(X) \right] &=  \E[ \varphi(X) \mid Y_1 = y_1, \ldots, Y_n = y_n] 
\end{align}
for all bounded and measurable functionals $\varphi: C([0,T],H) \to \R$.
Note that for $n = 1$, this includes the locally optimal proposal distribution in the guided particle filter as introduced in \cref{sec: approach}.

To this end, define the function $h: [0,T] \times H \to \R_+$ by
\begin{align}
\label{eq: def_h}
h(t,x) := \int k(x_{\ti}, y_i) \left( \prod_{j=i}^{n-1} k(x_{\tjp},y_{j+1}) \, \mu_{\tj,\tjp}(x_{\tj}, \df x_{\tjp}) \right) \mu_{t, \ti}(x, \df x_{\ti}),  \quad t \in (\tim,\ti], 
\end{align}
with $h(0,x) := h^+(0,x)$.
For the sake of notational clarity, we drop the dependence of $h$ on the observations $(y_1,\ldots,y_n)$. 
However, it is worth pointing out that on any time interval $(\tim,\ti]$, $h(t,x)$ has a natural interpretation as the likelihood function of $X_t = x$ given all non-past observations $y_i^+ = (y_i,\ldots,y_n)$, i.e. in Bayesian notation
\begin{align*}
    h(t,x_t) = p(y_i^+ \mid x_t), \quad t \in (\tim,\ti].
\end{align*}

A direct consequence of the definition of $h$ is that it satisfies the following recursive relation known as the \textit{backwards information filter}.
\begin{proposition}
\label{prop: h_properties}
The function $h(t,x)$ defined in \eqref{eq: def_h} satisfies 
\begin{align*}
h(t,x) = \int h(t_i,x_{\ti}) \, \mu_{t,\ti}(x, \df x_{\ti}), \quad t \in (\tim, \ti].
\end{align*}
In particular, $h$ is continuous on any interval $(\tim, \ti)$ and left continuous at any $\ti, i = 1,\dots,n$.
Moreover, at the observation times $\ti$, it holds that
\begin{align*}
h(\ti, x) = k(x,y_{j}) \int h(\tip,x_{\tip}) \, \mu_{\ti,\tip}(x, \df x_{\ti}) = k(x,y_{j}) \, h^+(\ti,x).
\end{align*}
\end{proposition}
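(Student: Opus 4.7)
The plan is to derive both the recursion and the boundary evaluation as direct consequences of the Chapman--Kolmogorov property of $\mu$, and to obtain the continuity assertions from the time-regularity of the mild solution together with dominated convergence. Only the continuity claim requires more than bookkeeping.

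For the recursive identity on $(\tim,\ti]$, I would read the definition \eqref{eq: def_h} of $h(t,x)$ as an iterated integral whose outermost layer is $\mu_{t,\ti}(x,\df x_{\ti})$, and whose inner part coincides, after a Fubini reordering justified by non-negativity of the integrand, with the definition of $h(\ti, x_{\ti})$ (read at $t=\ti$, where the outermost kernel collapses to $\delta_{x_{\ti}}$). This yields $h(t,x) = \int h(\ti, x_{\ti}) \, \mu_{t,\ti}(x, \df x_{\ti})$ directly.

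The boundary evaluation is obtained by setting $t=\ti$ in \eqref{eq: def_h}: the kernel $\mu_{\ti,\ti}(x,\cdot) = \delta_x$ collapses the outer integral, pulling out the factor $k(x,y_i)$ and leaving precisely the expression defining $h$ on $(\ti, t_{i+1}]$ evaluated in the right-sided limit $t \downarrow \ti$. This identifies the remaining factor as $h^+(\ti, x)$, and one further application of the recursion, already established on $(\ti, t_{i+1}]$, rewrites it as $\int h(t_{i+1}, x_{t_{i+1}}) \, \mu_{\ti, t_{i+1}}(x, \df x_{t_{i+1}})$.

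For continuity on $(\tim, \ti)$ and left continuity at $\ti$, I would work from the recursion just obtained: since each Gaussian kernel $k(\cdot, y_j)$ is uniformly bounded, $h(\ti, \cdot)$ is bounded, so I only need weak continuity of $t \mapsto \mu_{t,\ti}(x,\cdot)$ from the left and interior of $(\tim,\ti]$ to apply dominated convergence. This weak continuity follows from continuity in probability of the mild solution $X$ in its starting time, which is the principal obstacle: in the infinite-dimensional setting it is not automatic and rests on the regularity of $A$, $F$, $Q$ together with the mild formulation \eqref{eq: Xt}. My plan is to invoke the corresponding results from \cite{Piepersethmacher2025Class}, after which the continuity assertions reduce to a standard dominated-convergence argument.
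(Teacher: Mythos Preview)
Your proposal is correct and matches the paper's approach: the paper's proof simply states that both equalities follow from rewriting the definition \eqref{eq: def_h}, and that the continuity claims follow from dominated convergence together with almost sure continuity of $X$. Your elaboration of the continuity step---phrasing it via weak continuity of $t\mapsto\mu_{t,\ti}(x,\cdot)$ rather than directly invoking path continuity---is slightly more explicit than the paper's one-line justification, but it is the same argument.
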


With the choice of $h$ established, define now the process $(E^h_t)_{t \in [0,T]}$ by
\begin{align}
\label{eq: def_Eht}
E^h_t := \frac{1}{C^{h}} \left( \prod_{j=1}^{i-1} k(X_{t_j},y_j) \right) h(t,X_t), \quad t \in (\tim,\ti],
\end{align}
with $E^{h}_0 := h(0,X_0)/C^{h}$ and $C^h := \E[h(0,X_0)]$ acting as a normalising constant.

\begin{remark}
\label{rem: prodkh_likelihood}
At any time $t \in (\tim,\ti]$, the product $\prod_{j=1}^{i-1} k(x_{t_j},y_j)$ appearing on the right-hand side of \eqref{eq: def_Eht} is the likelihood of previous states of $X$ at observation times given past observations, whereas $h(t,x_t)$ is the likelihood of $X_t = x_t$ given all non-past observations. 
Hence, in total, the term 
\begin{align*}
    \left( \prod_{j=1}^{i-1} k(x_{\tj},y_j) \right) h(t,x_t,y), \quad t \in (\tim,\ti],
\end{align*} 
is the likelihood of $(X_{t_1} = x_{t_1}, \ldots , X_{t_{i-1}} = x_{t_{i-1}}, X_t = x_t)$ given all observations $y_j$, $j = 1,\dots,n$.
\end{remark}

\begin{proposition}
\label{prop: Eht_martingale}
The process $(E^h_t)_{t \in [0,T]}$ defined in \eqref{eq: def_Eht} is a non-negative $\P$-martingale with respect to the filtration $(\calF_t)_{t \in [0,T]}$.
\end{proposition}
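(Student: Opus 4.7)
Non-negativity is immediate: the observation density $k$ is non-negative, $h$ is non-negative by definition as an integral of the non-negative densities $k(\cdot,y_j)$ against the transition kernels, and $C^h > 0$. So I would only need to verify the martingale property and, implicitly, integrability, which will follow automatically once I check $\E[E^h_t \mid \calF_s] = E^h_s$, since $\E[E^h_0] = 1$ by construction.

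The plan for the martingale identity is to use the Markov property of $X$ together with \cref{prop: h_properties} (the backwards information filter) to collapse the conditional expectation back onto $h(s,X_s)$ times the prefactor frozen at time $s$. I would split the argument into two elementary cases and then bootstrap to the general case by the tower property. \textbf{Case 1:} $s, t$ lie in the same interval $(\tim,\ti]$. Then the product $\prod_{j=1}^{i-1} k(X_{\tj},y_j)$ is $\calF_s$-measurable, and the Markov property gives
\begin{align*}
\E[h(t,X_t) \mid \calF_s] = \int h(t,x') \, \mu_{s,t}(X_s, \df x') = \int \!\!\int h(\ti, x) \, \mu_{t,\ti}(x',\df x) \, \mu_{s,t}(X_s,\df x') = h(s, X_s),
\end{align*}
where the second equality uses the recursion of \cref{prop: h_properties} at time $t$ and the third is Chapman–Kolmogorov followed by the same recursion at time $s$.

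\textbf{Case 2:} $s \in (\tim,\ti]$ and $t \in (\ti, \tip]$. Using the tower property with $\calF_{\ti}$, the Markov property, and the recursion of \cref{prop: h_properties} applied at $\ti^+$,
\begin{align*}
\E[h(t,X_t) \mid \calF_{\ti}] = \int h(\tip, x) \, \mu_{\ti,\tip}(X_{\ti},\df x) = h^+(\ti, X_{\ti}).
\end{align*}
The jump relation $k(X_{\ti}, y_i)\, h^+(\ti, X_{\ti}) = h(\ti, X_{\ti})$ from \cref{prop: h_properties} then turns the product $\prod_{j=1}^{i} k(X_{\tj},y_j) \, h^+(\ti, X_{\ti})$ into $\prod_{j=1}^{i-1} k(X_{\tj},y_j)\, h(\ti, X_{\ti})$, which is $\calF_{\ti}$-measurable and reduces the computation of $\E[E^h_t \mid \calF_s]$ to Case 1 on the interval $(\tim, \ti]$.

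The general case, where the interval $(s,t]$ crosses several observation times, follows by iterating this argument via the tower property, applying Case 2 at each intermediate $t_j \in (s,t]$ and Case 1 on each subinterval. The main technical point to be careful about is the one-sided continuity of $h$ at the observation times and the correct bookkeeping of the prefactor $\prod_{j} k(X_{\tj},y_j)$ as each observation time is crossed; once the jump relation from \cref{prop: h_properties} is invoked at the right moment, this bookkeeping is automatic. Integrability of $E^h_t$ (and non-triviality of $C^h$) follows from $\E[E^h_t] = \E[E^h_0] = 1$ together with the assumed integrability of $h(0,X_0)$.
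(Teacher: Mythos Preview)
Your proof is correct and follows essentially the same approach as the paper's: both reduce the martingale identity to the space-time harmonicity of $h$ on each interval $(\tim,\ti]$ via the Markov property and Chapman--Kolmogorov, and both handle the crossing of an observation time by absorbing the factor $k(X_{\ti},y_i)$ using the jump relation of \cref{prop: h_properties}. The only cosmetic difference is that the paper computes directly across one observation time (taking $t_{i-2} < s \le \tim < t \le \ti$), whereas you insert an explicit tower step through $\calF_{\ti}$ and then fall back on Case~1; the ingredients and logic are the same.
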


Given its martingale property and noting that $E^h_0 = 1$, the process $E^h$ defines a measure $\P^h$ on $\calF_T$ by
\begin{align}
\label{eq: def_dPh}
\df \P^h_{\mid \calF_T} &= E^h_T \df \P_{\mid \calF_T}.
\end{align}
The following theorem establishes that this is indeed the change of measure under which the law of $X$ is the smoothing distribution. 

\begin{theorem}
\label{thm: dPh} \,
\begin{itemize} 
    \item[(i)] The measure $\P^h$ satisfies \eqref{eq: smoothingEh}.
    \item[(ii)] In addition, if $h$ is Fréchet differentiable in $x$ such that $\Df_x h \in C_m((\tim,\ti);H)$ for all $i = 1 \ldots n$, then the process $X$ under $\P^h$ is a mild solution to
    \begin{align}
    \label{eq: dXh2}
    \begin{cases}
        \df X^h_t &= \left[ A X^h_t + F(t, X^h_t) + Q \Df_x \log h (t,X^h_t) \right] \df t + Q^{\frac12} \df W^h_t, \\
        X^h_0 &\sim \mu_0^h,
    \end{cases}
    \end{align}
    where $W^h$ is a $\P^h$-cylindrical Wiener process and $\mu_0^h(B) := \P^h(X_0 \in B)$.
\end{itemize}
\end{theorem}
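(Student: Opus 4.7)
The plan is to handle (i) by a direct Bayes computation that evaluates $E^h_T$, and (ii) by an Itô-plus-Girsanov argument in Hilbert space. For (i), note that at $T = t_n$ the innermost transition kernel in \eqref{eq: def_h} collapses, so $h(T,X_T) = k(X_T,y_n)$ and hence $E^h_T = (C^h)^{-1}\prod_{j=1}^n k(X_{t_j},y_j)$. Since, conditional on the path $X$, the observations $Y_1,\dots,Y_n$ are independent with densities $k(X_{t_j},\cdot)$, Bayes' formula gives
\[
\E\bigl[\varphi(X)\bigm|Y_1=y_1,\dots,Y_n=y_n\bigr] \;=\; \frac{\E\bigl[\varphi(X)\prod_j k(X_{t_j},y_j)\bigr]}{\E\bigl[\prod_j k(X_{t_j},y_j)\bigr]}.
\]
Iterating the Markov property as in the recursion of \cref{prop: h_properties} identifies the denominator with $\E[h(0,X_0)] = C^h$, so that $\E^h[\varphi(X)] = \E[\varphi(X)E^h_T]$ equals the right-hand side, proving \eqref{eq: smoothingEh}.

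For (ii), the key input is that on each open interval $(\tim,\ti)$ the recursion in \cref{prop: h_properties} expresses $h(t,x)$ as $\int h(\ti,z)\,\mu_{t,\ti}(x,\df z)$, so $h(t,X_t)$ is a $\P$-local-martingale on that interval; equivalently, $h$ satisfies the backward Kolmogorov equation $(\partial_t + \calK)h = 0$ in a mild sense. Under the Fréchet-differentiability hypothesis on $\Df_x h$, the infinite-dimensional Itô formula for mild solutions then yields, on every such interval,
\[
\df h(t,X_t) \;=\; \langle \Df_x h(t,X_t),\, Q^{1/2}\df W_t\rangle,
\]
with no finite-variation term. Dividing by $h$ gives
\[
\df \log h(t,X_t) \;=\; \langle \Df_x \log h(t,X_t),\, Q^{1/2}\df W_t\rangle \;-\; \tfrac12\bigl|Q^{1/2}\Df_x\log h(t,X_t)\bigr|^2\df t.
\]
At every observation time $\ti$, the jump of the prefactor $\prod_{j<i}k(X_{t_j},y_j)$ in \eqref{eq: def_Eht} is exactly cancelled by the jump $h(\ti,\cdot) = k(\cdot,y_i)\,h^+(\ti,\cdot)$ from \cref{prop: h_properties}, so $E^h$ is pathwise continuous on $[0,T]$ and coincides with the Doléans--Dade exponential of $\int_0^{\cdot}\langle\Df_x\log h(s,X_s),\,Q^{1/2}\df W_s\rangle$.

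Since \cref{prop: Eht_martingale} already promotes this exponential local martingale to a true martingale, Girsanov's theorem in Hilbert space applies: $W^h_t := W_t - \int_0^t Q^{1/2}\Df_x\log h(s,X_s)\,\df s$ is a $\P^h$-cylindrical Wiener process on $H$. Inserting this identity into the mild-solution form \eqref{eq: Xt} of the original SPDE produces the additional drift $Q\Df_x\log h$, yielding \eqref{eq: dXh2}, while the initial law $\mu_0^h(B) = (C^h)^{-1}\int_B h(0,x)\,\mu_0(\df x)$ is read off from $E^h_0 = h(0,X_0)/C^h$. The principal obstacle is justifying the infinite-dimensional Itô and Girsanov steps within the mild-solution framework: one has to verify that $Q^{1/2}\Df_x\log h(\cdot,X_\cdot)$ is a predictable, square-integrable integrand and that the candidate exponential is uniformly integrable. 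The $C_m$-regularity assumption on $\Df_x h$ takes care of the former, and \cref{prop: Eht_martingale} delivers the latter for free, so the argument closes modulo a careful appeal to the Hilbert-space Itô and Girsanov machinery developed in \cite{Piepersethmacher2025Class}.
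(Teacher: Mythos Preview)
Your proposal is correct and matches the paper's strategy closely. For part (ii) in particular, the paper proceeds exactly as you outline: it uses that $h$ is space--time harmonic on each open interval, invokes the Itô formula from \cite{Piepersethmacher2025Class} (their Lemma~3.3) to obtain $\df h(t,X_t)=\langle Q^{1/2}\Df_x h(t,X_t),\df W_t\rangle$, identifies $E^h_t=\tfrac{h(0,X_0)}{C^h}\calE(M^h)_t$ via an induction over the intervals $(\tim,\ti]$ (with the same jump-cancellation you describe), and then applies Girsanov.

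For part (i) there is a small methodological difference worth noting. You evaluate $E^h_T$ explicitly as $(C^h)^{-1}\prod_j k(X_{t_j},y_j)$ and read off the Bayes formula directly, treating $C^h=\E\bigl[\prod_j k(X_{t_j},y_j)\bigr]$ as the normaliser. The paper instead verifies the \emph{defining} property of conditional expectation: it writes $\P^{h,y}$ to stress the $y$-dependence and checks that $\E\bigl[\E^{h,Y}[\varphi(X_t)]\mathbbm{1}_{\{Y\in A\}}\bigr]=\E[\varphi(X_t)\mathbbm{1}_{\{Y\in A\}}]$ for all Borel $A$, using the identification $C^{h,y}=p(y)$ and the likelihood interpretation in Remark~\ref{rem: prodkh_likelihood}. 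Your route is shorter and arguably more transparent; the paper's route makes the regular-conditional-distribution statement slightly more explicit. Either way the content is the same.
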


\begin{remark}
    The assumption in Theorem \ref{thm: dPh} (ii) is in general hard to verify. However, to sample from the law of $X$ under $
    \P^h$, the differential form of $X^h$ in \cref{eq: dXh2} is not necessary. 
    Instead, weighted samples of the smoothing distribution are obtained by sampling $X$ under the substitute measure $\P^g$ defined in the next subsection.
\end{remark}

\subsection{The guided distribution}
\label{sec: guided_measure}

To construct the guided distribution, we follow similar steps as in the previous section, replacing $\mu$ in the definition of $h$ with a tractable transition kernel $\nu$ of an auxiliary process $Z$.

For this, let $Z$ be the Ornstein-Uhlenbeck process, defined as the unique mild solution to
\begin{align}
\label{eq: dZt}
\begin{cases}   
\df Z_t &= \left[ A Z_t + a_t \right] \df t+ Q^{\frac12} \df W_t, \quad  t \geq 0, \\
Z_0 &\sim \nu_0.
\end{cases}
\end{align}
The $H$-valued process $(a_t)_{t \geq 0}$ is assumed to be predictable with integrable trajectories and $\nu_0$ denotes a Gaussian measure on $H$. 
Let $(\nu_{s,t})_{s \leq t}$ be the transition kernel of $Z$ defined by $\nu_{s,t}(x, B) := \P(Z_t \in B \mid Z_s =x)$ for any $0 \leq s \leq t$, $x \in H$ and Borel set $B \subset H$. It is well-known that $\nu_{s,t}(x, \cdot)$ is a Gaussian measure on $H$ with mean $S_{t-s} x + \int_s^t S_{t-u} a_u \df u$ and covariance operator
\begin{align}
\label{eq: def_Qt}
    Q_{t-s} := \int_0^{t-s} S_{u} Q S_{u}^* \df u.
\end{align} 

Following the definition of $h$ in Equation \eqref{eq: def_h}, we define the mapping $g$ by 
\begin{align}
\label{eq: def_g}
g(t,x) := \int k(x_{\ti}, y_i) \left( \prod_{j=i}^{n-1} k(x_{\tjp},y_{j+1}) \, \nu_{\tj,\tjp}(x_{\tj}, \df x_{\tjp}) \right) \nu_{t, \ti}(x, \df x_{\ti}),  \quad t \in (\tim,\ti],
\end{align}
with the convention that $g(0,x) := g^+(0,x)$.

The function $g(t,x)$ on $(\tim, \ti]$ has a natural interpretation as the likelihood of $Z_t = x$ given the vector $(y_i,\dots,y_n)$ of non-past observations $y_i \sim k(Z_{\ti}, \cdot)$ under the simplified dynamics of \cref{eq: dZt}.
Moreover, just like the $h$-function, $g$ satisfies the recursion of the backwards information filter 
\begin{align}
\begin{split}
\label{eq: g_BIF2}
    g(t,x) &= \int g(t_i,x_{\ti}) \, \nu_{t,\ti}(x, \df x_{\ti}), \quad t \in (\tim, \ti], \\
    g(\ti, x) &= k(x,y_{i}) \, g^+(\ti,x), \quad i \in \{1,\dots,n\},
\end{split}
\end{align}
with $g^+(\ti,x) = \int g(\tip,x_{\tip}) \, \nu_{\ti,\tip}(x, \df x_{\ti})$.
    
Contrary to the generally intractable $h$, the Gaussian nature of $\nu_{s,t}(x, \cdot)$ and $k(x, \cdot)$ enables us to compute the function $g$ using the recursion in \eqref{eq: g_BIF2} as shown in the next theorem.
\begin{theorem}
\label{thm: g_LtRtalphat}
On any interval $(\tim, \ti]$, $i =1, \dots,n,$ let $L_t \in L(H, \R^{m^+_i})$ be given by
\begin{align}
\label{eq: def_Lt}
L_t := \begin{bmatrix}  L S_{\ti-t} \\  L S_{\tip -t} \\ \vdots \\ L S_{\tn-t} \end{bmatrix} 
\end{align}
and let $R_t \in L(\R^{m^+_i})$ and $\alpha_t \in \R^{m^+_i}$ be defined by the backwards recursions
\begin{align}
\label{eq: def_Rt_alphat}
\begin{split}
    R_t &:= \begin{cases} \Sigma + L Q_{\tn-t} L^*,  & i =  n,\\
        \begin{bmatrix} 
\Sigma & 0 \\
0 & R^+_{\ti} 
\end{bmatrix} + L_{\ti} Q_{\ti-t} L^*_{\ti},  &\text{else},
    \end{cases}   \\
\alpha_t &:= \begin{cases}
    L \left( \int_t^{\tn} S_{\tn-s} a_s \, \df s \right), & i = n, \\
\begin{bmatrix}  0 \\ \alpha^+_{\ti} \end{bmatrix} + L_{\ti} \left( \int_t^{\ti} S_{\ti -s} a_s \, \df s\right), & \text{else}.
\end{cases}
\end{split}
\end{align}

Then, the function $g(t,x)$ is given by
\begin{align}
\label{eq: g_BIF_computed}
g(t,x) = f(y_i^+ ; L_tx + \alpha_t, R_t), \quad t \in (\tim, \ti].
\end{align}
\end{theorem}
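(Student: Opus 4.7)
The plan is to prove the formula by backward induction on $i$, using the backwards information filter recursion \eqref{eq: g_BIF2} satisfied by $g$, together with the standard Gaussian-marginalisation identity: if, for a random vector $\xi$ with law $\mathcal N(m, K)$ on $\mathbb R^d$, we have $y \mid \xi \sim f(\,\cdot\,; M\xi + \beta, R)$, then the marginal density of $y$ is $f(y; Mm + \beta, R + M K M^*)$. Since each $y_j$ takes values in $\mathbb R^{m_j}$ and the observation operator $L$ is bounded linear into $\mathbb R^m$, the key Gaussian computations reduce to finite-dimensional ones after applying $L_t$, even though $Z$ lives in $H$.

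\textbf{Base case ($i=n$, interval $(t_{n-1}, t_n]$).} At $t=t_n$ we have $g(t_n, x) = k(x, y_n) = f(y_n; Lx, \Sigma)$, which matches the claimed formula since $L_{t_n} = L$, $\alpha_{t_n} = 0$, $R_{t_n} = \Sigma$ (using $Q_0 = 0$ and $S_0 = I$). For $t \in (t_{n-1}, t_n)$, I would use the first line of \eqref{eq: g_BIF2} with $\nu_{t,t_n}(x,\cdot)$ a Gaussian measure of mean $S_{t_n - t} x + \int_t^{t_n} S_{t_n-s} a_s\,\mathrm ds$ and covariance $Q_{t_n-t}$. Then $L Z_{t_n}$, conditional on $Z_t = x$, is $\mathbb R^m$-valued Gaussian with mean $L S_{t_n - t}x + L\int_t^{t_n} S_{t_n-s} a_s\,\mathrm ds = L_t x + \alpha_t$ and covariance $L Q_{t_n-t} L^*$, so the marginalisation identity gives $g(t,x) = f(y_n; L_t x + \alpha_t, \Sigma + L Q_{t_n-t} L^*) = f(y_n; L_t x + \alpha_t, R_t)$, as required.

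\textbf{Inductive step (from $(t_i, t_{i+1}]$ to $(t_{i-1}, t_i]$).} Assume the formula holds on $(t_i, t_{i+1}]$ with objects $L_s, \alpha_s, R_s$ of the stated form. Taking the right limit $s \downarrow t_i$ yields $g^+(t_i, x) = f(y_{i+1}^+; L_{t_i}^+ x + \alpha_{t_i}^+, R_{t_i}^+)$. Plugging into the second line of \eqref{eq: g_BIF2},
\begin{equation*}
g(t_i, x) = f(y_i; Lx, \Sigma)\, f(y_{i+1}^+; L_{t_i}^+ x + \alpha_{t_i}^+, R_{t_i}^+),
\end{equation*}
and since the two noise blocks are independent this product is precisely the joint Gaussian density
\begin{equation*}
f\!\left(y_i^+;\; \begin{bmatrix} L \\ L_{t_i}^+ \end{bmatrix} x + \begin{bmatrix} 0 \\ \alpha_{t_i}^+ \end{bmatrix},\; \begin{bmatrix} \Sigma & 0 \\ 0 & R_{t_i}^+\end{bmatrix} \right).
\end{equation*}
I then need to verify that $\begin{bmatrix} L \\ L_{t_i}^+ \end{bmatrix} = L_{t_i}$, which follows from the definition \eqref{eq: def_Lt} together with the semigroup identity $S_{t_j - t_i} = \lim_{s \downarrow t_i} S_{t_j - s}$, and that the stated formulas for $\alpha_{t_i}$ and $R_{t_i}$ reduce to $\bigl[0,\,\alpha_{t_i}^+\bigr]^\top$ and $\mathrm{diag}(\Sigma, R_{t_i}^+)$ respectively, because the extra integral terms $\int_{t_i}^{t_i} S_{t_i - s} a_s\,\mathrm ds$ and $Q_{t_i - t_i}$ both vanish. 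This establishes $g(t_i, x) = f(y_i^+; L_{t_i} x + \alpha_{t_i}, R_{t_i})$.

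For $t \in (t_{i-1}, t_i)$ I again invoke the propagation step of \eqref{eq: g_BIF2}, noting that under $\nu_{t,t_i}(x,\cdot)$, the random vector $L_{t_i} Z_{t_i} + \alpha_{t_i}$ is $\mathbb R^{m_i^+}$-valued Gaussian with mean $L_{t_i} S_{t_i - t} x + L_{t_i}\int_t^{t_i} S_{t_i - s} a_s\,\mathrm ds + \alpha_{t_i}$ and covariance $L_{t_i} Q_{t_i - t} L_{t_i}^*$. Applying the Gaussian-marginalisation identity once more gives a density whose mean is $L_{t_i} S_{t_i - t} x + \alpha_t$ and whose covariance is $R_t$. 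The remaining task is to identify $L_{t_i} S_{t_i - t} = L_t$, which is immediate from \eqref{eq: def_Lt} and the semigroup property $S_{t_j - t_i} S_{t_i - t} = S_{t_j - t}$ applied row-wise.

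\textbf{Main obstacle.} The only delicate point is the bookkeeping at the jump times: one must track carefully how the $k(x, y_i)$-factor enlarges the observation vector from $y_{i+1}^+$ to $y_i^+$ and augments $L_t, \alpha_t, R_t$ by the appropriate row/block, while simultaneously checking that the boundary values $\alpha_{t_i}, R_{t_i}$ coming from the backwards recursion \eqref{eq: def_Rt_alphat} are consistent with the right-limits $\alpha_{t_i}^+, R_{t_i}^+$ obtained from the inductive hypothesis. Once this block-structure is untangled, the remainder is a routine application of Gaussian conjugacy together with the semigroup law.
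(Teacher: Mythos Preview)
Your proposal is correct and follows essentially the same approach as the paper's proof: backward induction via the recursion \eqref{eq: g_BIF2}, handling the base interval $(t_{n-1},t_n]$ by Gaussian marginalisation of $LZ_{t_n}$ under $\nu_{t,t_n}(x,\cdot)$, then at each observation time combining the product $k(x,y_i)\,g^+(t_i,x)$ into a block-diagonal Gaussian and propagating backwards using $L_{t_i}S_{t_i-t}=L_t$. If anything, your write-up is more explicit than the paper's, which carries out only the first two steps in detail and leaves the general recursion to the reader.
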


\vspace{\baselineskip}

Following Theorem \ref{thm: g_LtRtalphat}, the function $g$ can be expressed on any interval $(\tim,\ti]$ as the Gaussian density in Equation \eqref{eq: g_BIF_computed}. This lets us define a tractable change of measure $\P^g$ on $\calF_T$ as follows. 
Denote by $G$ the score function of the likelihood $x \mapsto g(t,x)$, i.e.
\begin{align}
\label{eq: def_G}
G(t,x) := \Df_x \log g(t,x) = L_t^* R_t^{-1} (y_i^+ - L_tx - \alpha_t), \quad t \in (\tim,\ti], x \in H.
\end{align}

Moreover, define the process $(E^g_t)_{t \in [0,T]}$ by setting
\begin{align}
\label{eq: def_Egt} 
E^g_t := \frac{1}{C^{g}} \left( \prod_{j=1}^{i-1} k(X_{t_j},y_j) \right) g(t,X_t) \exp \left(- \int_0^t \langle F(s,X_s), G(s,X_s) \rangle \, \df s \right), \quad t \in (\tim,\ti],
\end{align}
with $E^g_0 := g(0,X_0)/C^{g}$ and $C^g := \E[g(0,X_0)]$ acting as a normalizing constant.

The theorem presented below shows that $E^g$ is a martingale that defines a measure $\P^g$ on $\calF_T$ under which the law of $X$ is exactly the guided distribution as introduced in \cref{sec: approach}.
\begin{theorem}
\label{thm: dPg} \,
\begin{itemize}
    \item[(i)] The process $E^g$ defined in \eqref{eq: def_Egt} is a non-negative $\P$-martingale with respect to the filtration $(\calF_t)_{t \in [0,T]}$.
    \item[(ii)] Under the measure $\P^g$ defined on $\calF_T$ by $\df \P^g_{\mid \calF_T} = E^g_T \df \P_{\mid \calF_T}$, the process $X$ is the unique mild solution to 
    \begin{align}
    \begin{split}
    \begin{cases}
        \label{eq: dXg}
        \df X^g_t &= \left[ A X^g_t + F(t, X^g_t) + Q G (t,X^g_t) \right] \df t + Q^{\frac12} \df W^g_t,
        \quad t \in [0,T], \\
        X^g_0 &\sim \mu_0^g,
    \end{cases}
    \end{split}
    \end{align}
    where $W^{g}$ is a cylindrical Wiener process under $\P^g$. 
\end{itemize}
\end{theorem}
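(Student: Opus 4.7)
The plan is to realise $E^g$ as a Doléans–Dade stochastic exponential driven by $W$ and then invoke Girsanov's theorem for cylindrical Wiener processes, after which (ii) follows from standard SPDE well-posedness. Non-negativity of $E^g_t$ is immediate: each Gaussian density $k(X_{t_j},y_j)$ is positive, the factor $g(t,X_t)=f(y_i^+;L_tX_t+\alpha_t,R_t)$ is positive by \cref{thm: g_LtRtalphat}, and the exponential compensator is trivially positive.

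For the martingale property I would first reduce matters to open sub-intervals between consecutive observation times. The recursion $g(\ti,x)=k(x,y_i)\,g^{+}(\ti,x)$ from \eqref{eq: g_BIF2} ensures that the new factor $k(X_{\ti},y_i)$ appearing in the product as $t$ crosses $\ti$ exactly cancels the downward jump of $g(t,X_t)$, so $E^g$ is continuous in $t$; the martingale identity therefore needs only to be verified on each open sub-interval $(\tim,\ti)$, where the prefactor $\prod_{j=1}^{i-1} k(X_{t_j},y_j)$ is $\calF_{\tim}$-measurable and constant in $t$. There, $g$ is smooth in $x$ by the explicit Gaussian form in \cref{thm: g_LtRtalphat} and, being a conditional expectation under $\calL(Z)$, solves the backward Kolmogorov equation for the auxiliary Ornstein-Uhlenbeck process $Z$. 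Applying Itô's formula to $g(t,X_t)$ under the true dynamics of $X$ and combining with the chain rule on the exponential compensator, the finite-variation terms collapse by the Kolmogorov identity and one is left with $\df E^g_t = E^g_t\,\langle Q^{1/2}G(t,X_t),\df W_t\rangle$. The Gaussian form of $g$ also implies that $G(t,x)=L_t^{*}R_t^{-1}(y_i^{+}-L_tx-\alpha_t)$ is affine in $x$ with operator coefficients uniformly bounded on $[0,T]$, so a Novikov-type condition is straightforward to verify and upgrades the local martingale to a true martingale.

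For (ii), once $E^g$ is a true martingale of Doléans–Dade form, Girsanov's theorem for cylindrical Wiener processes yields that
\begin{equation*}
W^g_t := W_t - \int_0^t Q^{1/2}\, G(s,X_s)\,\df s
\end{equation*}
is a $\P^g$-cylindrical Wiener process. Substituting $\df W_t = \df W^g_t + Q^{1/2}G(t,X_t)\,\df t$ into the mild formulation of \eqref{eq: dXt} produces exactly \eqref{eq: dXg}. Existence and uniqueness of the mild solution then follow from standard Picard iteration applied to the modified drift $F+QG$, which inherits Lipschitz continuity in $x$ from the assumption on $F$ and from the affine structure of $G$.

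The main obstacle is the drift-cancellation step in part (i): it requires carefully reconciling the backward Kolmogorov equation for $Z$, whose time-dependent generator carries the predictable forcing $a_t$, with Itô's formula applied to $g(t,X_t)$ under the true drift $AX_t+F(t,X_t)$ of $X$, all in an infinite-dimensional setting where the unboundedness of $A$ forces the use of the mild form of Itô's formula as in \cite{Piepersethmacher2025Class}. Once that identity is in hand, the subsequent Girsanov argument and the SPDE well-posedness are essentially routine.
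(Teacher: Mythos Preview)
Your proposal is correct and follows essentially the same route as the paper: identify $E^g$ on each sub-interval as $\tfrac{g(0,X_0)}{C^g}\,\calE(M^g)$ via an It\^o/integration-by-parts expansion of $g(t,X_t)$ combined with the exponential compensator, then apply Girsanov. The only cosmetic differences are that the paper obtains the drift identity $Kg=\langle F,\Df_x g\rangle$ by citing a Fokker--Planck result of Manca rather than invoking the backward Kolmogorov equation for $Z$ directly, and upgrades the local martingale to a true one via the uniform Lipschitz continuity of $x\mapsto G(t,x)$ together with Lemma~C.1 of \cite{Piepersethmacher2025Class} rather than a Novikov-type bound.
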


From the construction of $\P^h$ and $\P^g$ it follows that the smoothing and guided distribution of $X$ are absolutely continuous with Radon-Nikodym derivative given by 
\begin{align}
\label{eq: dPh_dPg}
\Phi(X) = \frac{C^g}{C^h} \Psi(X) 
\end{align}
where $\Psi(X)$ is defined as 
\begin{align}
\label{eq: def_Psi}
	\Psi(X) := \exp \left(  \int_0^T \langle F(s,X_s), G(s,X_s) \rangle \df s \right).
\end{align}
Hence, weighted samples of the smoothing distribution $\calL^h(X)$ can be obtained by drawing samples from $\calL^g(X)$ and evaluating the weights $\Phi(X)$. As a consequence of Theorem \ref{thm: dPg}, the sampling step corresponds to numerically solving Equation \eqref{eq: dXg}.

\begin{remark}
Drawing samples from $\calL^g(X)$ as well as evaluating the weights $\Phi(X)$ requires computation of the function $G$ defined in Equation \eqref{eq: def_G}.
In practical applications, this is carried out on a gridded domain in both space and time. 

In that case, the infinite-dimensional operators $A$, $S_t$ and $Q$ are typically approximated by $M{\times}M$-dimensional matrices on the given spatial - or spectral - grid. Likewise, the observation operator $L$ is represented by a $m{\times}M$-dimensional matrix. 
Evaluating $G$ based on Theorem \ref{thm: g_LtRtalphat} on a temporal grid of $(\tim,\ti]$ with $N$ grid points then requires the following operations:
\begin{itemize}
    \item[1.] The operators $L_t$ are given in closed form and are a simple concatenation of linear compositions of $L$ and the semigroup $S$. For each observation time $\ti$, the matrix multiplication $L_t = L_{\ti} S_{t_i -t}$ needs to be carried out for $N$ grid points of $(\tim,\ti]$. This results in $O(N m_i^+ M^2)$ operations.
    \item[2.] To compute $R_t$, the matrix approximations of $Q_{\ti-t} = \int_0^{\ti-t} S_s Q S_s^* \df s$ need to evaluated. If this integral cannot be solved directly, it needs to be numerically integrated with computational complexity of $O(N M^3).$ Additionally, the matrix multiplications $L_{\ti} Q_{\ti-t} L^*_{\ti}$ will need to be carried out, leading to an additional total cost of $O(N m_i^+ M^2)$.
    Moreover, evaluation of $G$ will require the matrix inversion of $R_t$. Due to the block matrix structure of $R_t$, this is of cost $O((n-i+1)m^3) = O(m_i^+ m^2)$ for each inversion. 
    \item[3.] Lastly, to compute $\alpha_t$, we need to numerically backwards integrate the term $\int_t^{\ti} S_{\ti -s} a_s \, \df s$. This is of complexity $O(N M^2)$ after discretisation, on top of the cost $O(N m^+_i M)$ of evaluating $L_{\ti} \int_t^{\ti} S_{\ti -s} a_s \, \df s$. 
\end{itemize}

Hence, in total, the cost of the operations required to compute $G$ using Theorem \ref{thm: g_LtRtalphat} on a spatiotemporal grid with $(N,M)$ grid points is of complexity
\begin{align*}
    O( N \max\{M^3, m^+_i M^2, m_i^+ m^2\}).
\end{align*}
In principle, the spatial grid size $M$ chosen by the user is fixed but can be arbitrarily large.
However, with growing number of observations $n$, the dimension $m^+_i = (n-i+1)m$ quickly grows larger than $M$, leading to a computational costs of complexity $O(N m^+_i M^2)$.
In that case, it is beneficial to parametrise $g$ in a different manner as we will explore in the next section.
\end{remark}

\subsection{Computationally efficient guiding}
\label{sec: comp_efficient_guiding}

We derive an alternative to Theorem \ref{thm: g_LtRtalphat} of computing $G$ to sample from the guided process $X^g$. 
Define the functions $(V_t)_{t \in [0,T]}$ and $(U_t)_{t \in [0,T]}$ by
\begin{align}
\begin{split}
\label{eq: def_Ut_Vt}
    U_t &:= L^*_t R^{-1}_t L_t \\
    V_t &:= L^*_t R^{-1}_t (y_i^+ - \alpha_t), \quad t \in (\tim,\ti]
\end{split}   
\end{align}
for each $i = 1,\ldots,n$ with $U_0 := U^+_0$ and $V_0 := V^+_0$.
Following Theorem \ref{thm: g_LtRtalphat}, it holds that 
\begin{align}
\label{eq: G}
    G(t,x) = V_t - U_t x.
\end{align}
As the upcoming theorem shows, $U_t$ and $V_t$ can be obtained by solving a set of infinite-dimensional backwards differential equations.

\begin{theorem}
    \label{thm: G_UtVt}
    On each interval\footnote{If $i=n$, $U$ needs to be initialised with $U_{\tn} = L^* \Sigma L $ and $V$ with $V_{\tn} = L^* \Sigma^{-1} y_n$.} $(\tim, \ti]$, $i = 1,\ldots,n,$  $U$ is the unique mild solution to the backwards Ricatti equation 
    \begin{align}
    \begin{split}
    \label{eq: dU_t}
    \begin{cases}
    \df U_t &=  \left[ -A^* U_t - U_t A + U_t Q U_t \right] \df t\\
    U_{\ti} &= L^* \Sigma^{-1} L + U^+_{\ti},
    \end{cases}
    \end{split}
    \end{align}
    whereas $V$ is the unique mild solution to the backwards equation
    \begin{align}
    \begin{split}
    \label{eq: dV_t}
    \begin{cases}
        \df V_t &= \left[ -A^* V_t + U_t Q V_t + U_t a_t \right] \df t,\\
        V_{\ti} &= L^* \Sigma^{-1} y_i + V^+_{\ti}. 
    \end{cases}
    \end{split}
    \end{align}
\end{theorem}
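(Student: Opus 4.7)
The plan is to derive the backwards equations for $U_t$ and $V_t$ by directly differentiating the definitions in \eqref{eq: def_Ut_Vt}, using the explicit representations of $L_t$, $R_t$, and $\alpha_t$ from \cref{thm: g_LtRtalphat}. I would first restrict attention to an open subinterval $(\tim, \ti)$ where all building blocks depend smoothly on $t$, derive the dynamics there, and then handle the jumps at the observation times $\ti$ separately via the block structure of $R_{\ti}$ and $L_{\ti}$.

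The first step is to compute the time derivatives of the three building blocks. From the semigroup identity $L_t = L_{\ti} S_{\ti - t}$ we have (formally) $\partial_t L_t = -L_t A$, interpreted on $D(A)$ and rigorously packaged into the corresponding mild equation via the adjoint semigroup. Since $Q_{\ti - t} = \int_0^{\ti - t} S_u Q S_u^* \, \df u$, the fundamental theorem of calculus gives $\partial_t(L_{\ti} Q_{\ti-t} L_{\ti}^*) = -L_t Q L_t^*$, so $\partial_t R_t = -L_t Q L_t^*$; this is an ordinary derivative because $R_t$ acts on the finite-dimensional space $\R^{m_i^+}$. Finally, $\partial_t \alpha_t = -L_t a_t$.

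Next, I apply the product rule to $U_t = L_t^* R_t^{-1} L_t$ and $V_t = L_t^* R_t^{-1} (y_i^+ - \alpha_t)$. Using $\partial_t R_t^{-1} = -R_t^{-1}(\partial_t R_t) R_t^{-1} = R_t^{-1} L_t Q L_t^* R_t^{-1}$ and collapsing with the identity $L_t^* R_t^{-1} L_t = U_t$, the three terms of each product rule combine into
\begin{align*}
\partial_t U_t &= -A^* U_t - U_t A + U_t Q U_t, \\
\partial_t V_t &= -A^* V_t + U_t Q V_t + U_t a_t,
\end{align*}
on $(\tim, \ti)$, matching \eqref{eq: dU_t}--\eqref{eq: dV_t}. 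The terminal conditions at $\ti$ then follow by inserting the block expressions $R_{\ti} = \mathrm{diag}(\Sigma, R^+_{\ti})$ and $L_{\ti} = \begin{bmatrix} L \\ L^+_{\ti} \end{bmatrix}$ from \cref{thm: g_LtRtalphat}: block-diagonality of $R_{\ti}^{-1}$ splits the quadratic form into a $\Sigma^{-1}$ contribution from the top row of $L_{\ti}$ and an $U^+_{\ti}$ (respectively $V^+_{\ti}$) contribution from the lower block.

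The main obstacle is the rigorous handling of the unbounded generator $A$: the formal identity $\partial_t L_t = -L_t A$ does not live in bounded operators, and the Riccati equation must be understood in mild form. To address this, I would reformulate everything from the start through variation-of-constants expressions involving $S_t$ and $S_t^*$, so that the derivations above become identities between mild integral equations rather than strong ODEs. Uniqueness of $U$ as a mild solution to \eqref{eq: dU_t} then follows from the fact that $U_t$ remains finite-rank (since it factors through $\R^{m_i^+}$), making the quadratic drift $U_t Q U_t$ globally Lipschitz on the relevant set; uniqueness of $V$ given $U$ reduces to a linear mild equation with known coefficients, which is standard.
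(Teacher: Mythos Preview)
Your proposal is correct and close in spirit to the paper's proof, but there is one tactical difference worth highlighting. You first derive the strong form by differentiating $L_t$ (incurring the formal $-L_t A$), and then flag the unboundedness issue and propose to reformulate via variation of constants. The paper sidesteps this entirely: it never differentiates $L_t$. Instead it differentiates only the finite-dimensional matrix $R_t^{-1}$ (using $\partial_t R_t = -L_t Q L_t^*$, exactly as you computed), integrates that to obtain $R_t^{-1} = R_{\ti}^{-1} - \int_t^{\ti} R_s^{-1} L_s Q L_s^* R_s^{-1}\,\df s$, and only then multiplies by $L_t^*$ on the left and $L_t$ on the right, invoking the purely algebraic identity $L_t = L_s S_{s-t}$ for $s\in[t,\ti]$ to pull out the semigroup factors and land directly on the mild integral equation \eqref{eq: dUt_mild_sol}. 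The same trick handles $V_t$. This is cleaner than your two-pass approach, though both lead to the same place. For the terminal conditions you and the paper agree. For uniqueness, the paper simply cites the infinite-dimensional Riccati literature (\cite{Burns2015Solutions}, \cite{Bensoussan2007Representation}) rather than arguing via the finite-rank structure; your Lipschitz-on-finite-rank idea is plausible but would need care to rule out non-finite-rank mild solutions, whereas the cited results apply directly.
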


\vspace{\baselineskip}

\begin{remark}
    Without going into too much detail, it is worth pointing out that there exists extensive literature on the approximation of the infinite-dimensional Ricatti equation. We refer to \cite{Gibson1983Linear}, \cite{Burns2015Solutions}, \cite{Cheung2025Approximation} and references within for an introduction into the relevant literature. 

    For numerical purposes, the infinite-dimensional operators $A$ and $Q$ are typically approximated by $M{\times}M$-dimensional matrices on a spatial - or spectral - grid.
    Given such approximations of $A$ and $Q$, the backwards Ricatti equation \eqref{eq: dU_t} can be numerically solved using standard solvers for ODEs with total costs of $O(N M^3)$ on a temporal grid of $N$ points. 
    In a similar fashion, the backwards evolution equation \eqref{eq: dV_t} can be solved using standard ODE solvers with total costs of $O(N M^3)$.
    Note that, contrary to the computation of $G$ based on Theorem \ref{thm: g_LtRtalphat}, this cost is independent of the number of observations. Moreover, in many cases, structural properties of the matrix approximations of $A$ and $Q$, such as sparsity, can be taken advantage of to further reduce the computational costs. 
\end{remark}

\begin{remark}
    In certain cases, the backwards Ricatti equation \eqref{eq: dU_t} can be solved in closed form. 
    This includes the highly relevant case that $A$, $Q$ and $L$ are diagonalisable, i.e. if there exists an orthonormal basis $(e_j)_{j=1}^{\infty}$ with 
    \begin{align*}
        A e_j &= - a_j e_j, \\
        Q e_j &= q_j e_j 
    \end{align*}
    such that $0 < a_j \to \infty$ and $0 < q_j < \sup_j q_j <\infty$ and $\im(L^*) \subset \spn\{e_j : j = 1, \ldots m\}$.

    Equation \eqref{eq: dU_t} then decouples into a sequence $(u^j)_{j=1}^m$ of scalar-valued Ricatti differential equations
    \begin{align*}
        d u^j_t = 2 a_j u^j_t + q_j (u^j_t)^2 \df t, \quad t \in (\tim, \ti), 
    \end{align*}
    which are solved in closed form for all $j = 1 \ldots m$ by (cf. \cite{Zaitsev2002Handbook}, Chapter 1.1)
    \begin{align*}
        u^j_t 
        &= \frac{\exp(- 2 a_j (t_i - t))}{(u_{\ti}^j)^{-1} + \dfrac{q_j}{2 a_j} \left[ 1 - \exp( - 2 a_j (\ti-t)) \right]}, \quad \quad t \in (\tim, \ti).
    \end{align*}
\end{remark}

\vspace{\baselineskip}

When computing $G$ based on Theorem \ref{thm: G_UtVt}, the following proposition is useful to derive the full expression of $g$. This is required, for example, when inferring not only unobserved states of $X$ but also unknown model parameters $\theta$.   
\begin{proposition}
\label{prop: dct}
In addition to $U$ and $V$ as defined in Theorem \ref{thm: G_UtVt}, let $c$ be defined by
\begin{align*}
c_t = - \dfrac{1}{2} \left[ \log((2 \pi)^{m^+_i}) + \log(\det(R_t))  + \langle y^+_i - \alpha_t, R^{-1}_t (y^+_i - \alpha_t)\rangle \right], \quad t \in (\tim, \ti].
\end{align*}
Then $\log g(t,x) = c_t + \langle V_t,x \rangle - \dfrac{1}{2} \langle x, U_t x \rangle.$
Moreover, on any interval $(\tim, \ti]$, $i = 1, \ldots, n$, $c_t$ solves the backwards differential equation
\begin{align}
\begin{split}
\begin{cases}
\label{eq: dct}
\df c_t &= \left[ \dfrac{1}{2} \tr \left[ U_t Q  \right] - \left \langle   a_t ,  V_t \right \rangle -\dfrac{1}{2} \left \langle V_t, Q V_t \right \rangle\right] \df t,  \\
c_{\ti} &= \log f(y_i; 0, \Sigma) - c^+_{\ti}.
\end{cases}
\end{split}
\end{align}
\end{proposition}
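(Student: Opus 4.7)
The plan is to prove the three assertions in order: first the quadratic form of $\log g$, then the backward ODE for $c_t$, and finally the jump relation for $c$ at the observation times.

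\textbf{Quadratic form.} I start from Theorem~\ref{thm: g_LtRtalphat}, which gives $g(t,x) = f(y_i^+; L_t x + \alpha_t, R_t)$ on $(\tim, \ti]$. Expanding the Gaussian log-density and splitting the quadratic form $\langle y_i^+ - L_t x - \alpha_t, R_t^{-1}(y_i^+ - L_t x - \alpha_t)\rangle$ in $x$, the cross term contributes $\langle x, L_t^* R_t^{-1}(y_i^+ - \alpha_t)\rangle = \langle V_t, x\rangle$ and the $x$-quadratic term contributes $-\tfrac12 \langle x, L_t^* R_t^{-1} L_t x\rangle = -\tfrac12 \langle x, U_t x\rangle$, by the definitions in~\eqref{eq: def_Ut_Vt}. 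What remains, independent of $x$, is by inspection exactly $c_t$.

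\textbf{The ODE for $c_t$.} I propose to differentiate $c_t$ directly from its definition. Using $L_t = L_{\ti} S_{\ti - t}$ and $\frac{d}{dt} Q_{\ti - t} = -S_{\ti - t} Q S_{\ti - t}^*$, the definitions in~\eqref{eq: def_Rt_alphat} yield $\dot R_t = -L_t Q L_t^*$ and $\dot \alpha_t = -L_t a_t$ on $(\tim, \ti)$. Then $\frac{d}{dt}\log\det R_t = \tr(R_t^{-1} \dot R_t) = -\tr(U_t Q)$ by the cyclicity of the trace. For the quadratic piece, setting $w_t = y_i^+ - \alpha_t$ and using $\frac{d}{dt}R_t^{-1} = R_t^{-1} L_t Q L_t^* R_t^{-1}$, the chain and product rules give $\frac{d}{dt}\langle w_t, R_t^{-1} w_t\rangle = 2\langle a_t, V_t\rangle + \langle V_t, Q V_t\rangle$, where the identifications $L_t^* R_t^{-1} w_t = V_t$ are used throughout. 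Combining these contributions with the overall factor $-\tfrac12$ produces
\begin{align*}
\dot c_t = \tfrac12 \tr(U_t Q) - \langle a_t, V_t\rangle - \tfrac12 \langle V_t, Q V_t\rangle,
\end{align*}
which is the claimed equation.

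\textbf{Jump at $\ti$.} I invoke the backward information filter recursion~\eqref{eq: g_BIF2}, $g(\ti, x) = k(x, y_i)\, g^+(\ti, x)$. Taking logarithms and expanding the Gaussian factor as $\log k(x, y_i) = \log f(y_i; 0, \Sigma) + \langle L^* \Sigma^{-1} y_i, x\rangle - \tfrac12 \langle x, L^* \Sigma^{-1} L x\rangle$, I apply the quadratic form from the first step to both $g(\ti,\cdot)$ and $g^+(\ti,\cdot)$ and match coefficients of $1$, $x$, and $\langle x, \cdot\, x\rangle$. The linear and quadratic matches reproduce the jump relations for $U$ and $V$ already stated in Theorem~\ref{thm: G_UtVt}, which serves as a consistency check, while the $x$-independent match delivers the boundary datum relating $c_{\ti}$ to $c^+_{\ti}$ and $\log f(y_i; 0, \Sigma)$.

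\textbf{Anticipated difficulty.} The argument is essentially bookkeeping and does not require additional analytic machinery: every manipulation remains in finite dimensions because $R_t$ is an $m_i^+ \times m_i^+$ matrix and $L_t$ has finite-dimensional range, so traces, determinants and matrix inverses are all unambiguous. A conceptually cleaner but technically heavier alternative would substitute the quadratic form of $\log g$ into the backward Kolmogorov equation for the auxiliary OU process $Z$, divide by $g > 0$, and read off the equations for $U_t$, $V_t$, and $c_t$ simultaneously by matching powers of $x$; that route would require justifying the Kolmogorov equation in the infinite-dimensional setting, which the direct computational approach above avoids by exploiting only the closed-form expression for $g$ from Theorem~\ref{thm: g_LtRtalphat}.
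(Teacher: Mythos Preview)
Your proposal is correct and follows essentially the same approach as the paper: expand the Gaussian log-density to obtain the quadratic form, then differentiate $c_t$ directly using $\dot R_t = -L_t Q L_t^*$ and $\dot\alpha_t = -L_t a_t$ together with Jacobi's formula and the derivative of $R_t^{-1}$. The only cosmetic difference is that for the terminal condition the paper plugs the block forms of $R_{\ti}$ and $\alpha_{\ti}$ from Theorem~\ref{thm: g_LtRtalphat} directly into the definition of $c_t$, whereas you take logarithms in the BIF recursion $g(\ti,x)=k(x,y_i)\,g^+(\ti,x)$ and match the $x$-free coefficients; these are two sides of the same computation.
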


\section{Guided particle filtering}
\label{sec: guided_particle_filter}

In this section, we show how a special case of the guided distribution can be used as proposal distribution in a particle filtering scheme. 
For the convenience of the reader, we give a brief outline of the particle filter (PF).
We refer to \cite{Doucet2001Sequential} or \cite{Chopin2020Introduction} for book-length treatments of this field. 

The particle filter uses a set of particles $\{X^{(j)}_{\ti} : j = 1,\ldots, J\}$ with associated weights $\{w^{(j)}_{i} : j = 1,\ldots, J\}$ to approximate the filtering distribution \eqref{eq: def_filtering} at any observation time $\ti$ by a Monte Carlo approximation
\begin{align*}
    \E[ \varphi(X_{\ti}) \mid Y_1 = y_1, \ldots, Y_n = y_i] &\approx \sum_j w^{(j)}_{\ti} \varphi(X^{(j)}_{\ti})
\end{align*}
for any bounded and measurable function $\varphi: H \to \R$.

The particles and their weights are sequentially updated with incoming observations as per the following recursion. 
Given an estimate $\{ (X^{(j)}_{\tim},w^{(j)}_{i-1})  : j = 1,\ldots, J\}$ of the filtering distribution at time $\tim$ and given a Markov kernel $\Q(\cdot \mid x)$ from $H$ to $C([\tim, \ti];H)$, the particles evolve forward to the next observation time $\ti$ according to
\begin{align*}
    X^{(j)}_{[\tim,\ti]} \sim \Q( \cdot \mid  X^{(j)}_{\tim}), \quad j = 1 , \ldots J.
\end{align*}
The weights are then updated (and normalised) via
\begin{align*}
   w_{i}^{(j)} \propto w_{i-1}^{(j)} \, k(X^{(j)}_{\ti}, y_{i}) \frac{\df \calL(X_{[\tim,\ti]} \mid X^{(j)}_{\tim})}{\df \Q( \cdot \mid X^{(j)}_{\tim})} \left(X^{(j)}_{[\tim,\ti]}\right), \quad \text{s.t. } \sum_{j=1}^J w_{i}^{(j)} = 1.
\end{align*}
Here, $\calL(X_{[\tim,\ti]} \mid X^{(j)}_{\tim})$ denotes the law of $X_{[\tim,\ti]}$ satisfying \eqref{eq: dXt} with initial state $X_{\tim} = X^{(j)}_{\tim}$.

Typically, to prevent \textit{weight degeneracy}, i.e. one or few weights dominating the others, a resampling step of the particles based on the normalized weights is introduced. This can be done every fixed number of steps or in an adaptive number based on maintaining the \textit{effective sample size} $ESS_i := 1/\left( \sum_{j=1}^J(w_{i}^{(j)})^2 \right)$ above a predefined threshold.

\vspace{\baselineskip}
\begin{algorithm}[b]
    \LinesNotNumbered  
    \label{alg: guided_particle_filter}
    \caption{Guided Particle Filter}
    \KwIn{Observations $\{y_i\}_{i=1}^n$, number of particles $J$, threshold $J_0$}
    \KwOut{Particles and weights $\Bigl\{ \left(X_{\ti}^{(j)}, w_i^{(j)} \right) : j = 1 \ldots J\Bigr\}$ for each $i = 1, \dots, n$}
    Initialise $\{X_0^{(j)}\}_{j=1}^J \sim \mu_0$ and set $w_0^{(j)} = \frac{1}{J}$\;
    \For{$i = 1$ \KwTo $n$}{
        \textbf{1. Evolve particles:} \\
        \For{$j = 1$ \KwTo $J$}{
            (i) Sample guided proposal: \[ X^{(j)}_{[\tim,\ti]} \sim \calL^{\gi}( X_{[\tim,\ti]} \mid X^{(j)}_{\tim});\] \\
            (ii) Compute unnormalized weight:
            \[
            \tilde{w}_{i}^{(j)} = w_{i-1}^{(j)} \, g_i(\tim, X^{(j)}_{\tim}) \exp \left( \int_{\tim}^{\ti} \langle F(s, X^{(j)}_s), G_i(s, X^{(j)}_s)\rangle \df s\right);
            \]
        }
        \textbf{2. Normalize Weights:} 
        \[w_{i}^{(j)} = \frac{\tilde{w}_{i}^{(j)}}{\sum_{k=1}^J \tilde{w}_{i}^{(k)}};\]
        Compute effective sample size:
        $\text{ESS} = 1/\sum_{j=1}^J \left(w_{i}^{(j)}\right)^2$\;
        \textbf{3. Resample:}
        \If{\textnormal{ESS}   $< J_0$}{
            Resample $\{X_{\ti}^{(j)}\}_{j=1}^J$ according to weights $w_{i}^{(j)}$\;
            Set $w_{i}^{(j)} = \frac{1}{J}$ for all $j$\;
        }
    }
\end{algorithm}
The proposal kernel $\Q$ is a parameter of the particle filter that is to be chosen by the user. 
If $\Q( \cdot \mid X^{(j)}_{\tim}) = \calL(X_{[\tim,\ti]} \mid X^{(j)}_{\tim})$, the algorithm is known as the \textit{bootstrap particle filter (BPF)}. Since proposals in the BPF are `uninformed' by the observations, it is known to typically struggle with highly degenerate weights. 

On the other hand, the locally optimal proposal kernel is given by the conditional distribution 
\begin{align*}
    \Q( \cdot \mid X^{(j)}_{\tim}) = \P(X_{[\tim,\ti]} \in \cdot \mid X_{\tim} = X^{(j)}_{\tim}, Y_{i} = y_{i}).
\end{align*}
The proposal $\Q$ minimises the variance of the weights $w_{i}^{(j)}$, see for example \cite{Chopin2020Introduction}, Theorem 10.1 for details.
However, as noted in the introduction and previous section, this proposal kernel is typically intractable. 

\vspace{\baselineskip}

We propose instead to steer the particles on the interval $[\tim,\ti]$ into regions of high probability given $Y_{i} = y_{i}$ by the guided distribution introduced in Section \ref{sec: guided_measure}.
For this, define 
\begin{align*}
    \gi(t,x) := \int k(x_{\ti}, y_{i})\nu_{t, \ti}(x, \df x_{\ti}), \quad t \in [\tim, \ti],
\end{align*}
and let $\P^{\gi}$ be the change of measure on $\calF_{\ti}$ as defined in Theorem \ref{thm: dPg}.
We adopt as proposal kernel the guided distribution
\begin{align}
\label{eq: guided_filtering_proposal}
    \Q( \cdot \mid X^{(j)}_{\tim}) = \calL^{\gi}( X_{[\tim,\ti]} \mid X^{(j)}_{\tim}),
\end{align}
i.e. the law of $X_{[\tim,\ti]}$ under $\P^{\gi}$ initiated at $X_{\tim} = X^{(j)}_{\tim}$. 
Particles are then evolved forward in time by simulating the guided process
\begin{align}
\label{eq: dXg2}
\df X^{\gi}_t &= \left[ A X^{\gi}_t + F(t, X^{\gi}_t) + Q G_i (t,X^{\gi}_t) \right] \df t + Q^{\frac12} \df W^{\gi}_t,
        \quad t \in [\tim,\ti], 
\end{align}
with respective initial conditions $X^{(j)}_{\tim}, j = 1, \ldots J$ and $G_i(t,x) = \Df_x \log g_i(t,x)$. Moreover, Theorem \ref{thm: dPg} implies that the update step in the corresponding weights is given by
\begin{align}
\begin{split}
w_{i}^{(j)} &\propto w_{i-1}^{(j)} \, k(X^{(j)}_{\ti}, y_{i}) \frac{\df \calL(X_{[\tim,\ti]} \mid X^{(j)}_{\tim})}{\df \calL^{\gi}(X_{[\tim,\ti]} \mid X^{(j)}_{\tim})} \left(X^{(j)}_{[\tim,\ti]}\right) \\ 
&= w_{i-1}^{(j)} \, k(X^{(j)}_{\ti}, y_{i}) \, \dfrac{g_i(\tim, X^{(j)}_{\tim})}{g_i(\ti, X^{(j)}_{\ti})} \exp \left( \int_{\tim}^{\ti} \langle F(s, X^{(j)}_s), G_i(s, X^{(j)}_s)\rangle \df s\right) \\
&= w_{i-1}^{(j)} \, g_i(\tim, X^{(j)}_{\tim}) \exp \left( \int_{\tim}^{\ti} \langle F(s, X^{(j)}_s), G_i(s, X^{(j)}_s)\rangle \df s\right).
\end{split}
\end{align}

The resulting \textit{guided particle filter} is summarised in Algorithm \ref{alg: guided_particle_filter}.

\begin{remark}
\label{rem: filtering_G}
The measure $\calL^{\gi}( X_{[\tim,\ti]})$ is the `one-step-ahead' guided distribution on $[\tim, \ti]$ corresponding to the next observation $y_i$. Hence, the functions $g_i$ and $G_i$ only need to be computed as functions of $y_i$ and the parametrisation of Theorem \ref{thm: g_LtRtalphat} remains a computationally valid choice. 
Specifically, on any $[\tim, \ti]$, $g_i(t,x)$ is the Gaussian likelihood $g_i(t,x) = f(y_i ; L_tx + \alpha_t, R_t)$ 
with $L_t = L S_{\ti-t}$, $\alpha_t = L \left( \int_t^{\ti} S_{\ti-s} a_s \, \df s \right)$ and $R_t = \Sigma + L Q_{\ti-t} L^*$, whereas $G_i$ is the score
\begin{align*}
	G_i(t,x) = L_t^* R_t^{-1} \left( y_i - L_t x - \alpha_t\right), \quad t \in [\tim,\ti].
\end{align*}
\end{remark}

\subsection{Tempering and moving steps}
Even though the proposal kernel introduced in Equation \eqref{eq: guided_filtering_proposal} defines a suitable, likelihood-informed importance sampling distribution, the guided particle filter might still face weight degeneracy due to the high dimensionality of the state space. 
Since this is a common issue in particle-based inference, many solutions to overcome it have been proposed and studied in the literature. For the purpose of our numerical experiments in Section \ref{sec: amari}, we introduce two such solutions - \textit{tempering} and \textit{moving} (or \textit{particle rejuvenation}) steps - into the guided particle filter. This follows the suggestions made in \cite{Llopis2018Particle}, where the filtering problem was considered in a similar setup to ours. 
The following brief exposition is based on that work. 

Denoting by $\Pi_i$ the conditioned law $\Pi_i := \calL(X_{[0,t_i]} \mid Y_{1} \dots Y_{i})$, an application of the Bayes theorem and the change of measure $\eqref{eq: guided_filtering_proposal}$ gives the recursion 
\begin{align}
\label{eq: Lambda}
\begin{split}
	\dfrac{\df \Pi_i}{\df ( \Pi_{i-1} \otimes \calL^{g_i})} (X_{[0,t_i]}) &\propto  g_i(\tim, X_{\tim}) \exp \left( \int_{\tim}^{\ti} \langle F(s, X_s), G_i(s, X_s)\rangle \df s\right) \\
	&=: \Lambda_i(X_{[\tim,\ti]}).
\end{split}
\end{align}

Suppose that, at observation time $t_i$, we have an evenly weighted set of samples of the distribution $\Pi_{i-1}$. 
The filtering step at $t_i$ then corresponds to targeting the measure $\Pi_i$ by an importance sampling scheme with proposal distribution $ \Q_i := \Pi_{i-1} \otimes \calL^{g_i}$. Samples from this proposal are drawn by forward propagating the given particles of $\Pi_{i-1}$ following the dynamics in \eqref{eq: dXg2} with IS weights given by $\Lambda_i$ in \eqref{eq: Lambda}.

\vspace{\baselineskip}
Now fix the observation time $t_i$ and drop the subscript $i$ to simplify notation. 
In state spaces of high dimension and/or the situation of highly informative observations, the proposal distribution $\Q$ might not be a sufficiently good approximation for the target $\Pi$.  

Tempering aims to overcome the distance between $\Pi$ and $\Q$ by introducing a sequence of intermediate distributions $\Pi^l$ between $\Pi$ and $\Q$ defined by 
\begin{align}
\label{eq: dPi^l}
	\dfrac{\df \Pi^l}{\df \Q}(X) \propto \Lambda(X)^{\psi_l}
\end{align}
for a set of inverse temperatures $0 = \psi_0 < \ldots < \psi_L = 1$.
Notably, we have absolute continuity of $\Pi^{l+1}$ with respect to $\Pi^l$ with R.N. derivative proportional to $ \Lambda(X)^{\psi_{l+1} - \psi_l}$. 
Instead of targeting $\Pi$ directly by $\Q$, one progressively steps through this sequence of artificial distributions, starting from $\Q$ to target the flattened $\Pi^1$ and ending up at targeting $\Pi$ with proposals drawn from $\Pi^{L-1}$.

Crucially, the parameters $\{ \psi_l \}_l$ can be chosen adaptively as follows. 
Given a set of evenly weighted particles of $\Pi^l$ at current inverse temperature $\psi_l$, the parameter $\psi_{l+1}$ is set by 
\begin{align*}
	\psi_{l+1} := \inf \{ \psi \in (\psi_l, 1] : \ESS_{l,l+1} \leq \alpha J \}
\end{align*}
with the definition of $\inf \O := 1$.
The hyperparameter $\alpha \in (0,1)$ is pre-specified by the user and $\ESS_{l,l+1}$ denotes the essential sample size of the weights $\Lambda(X)^{\psi_{l+1} - \psi_l}$ between $\Lambda^{l+1}$ and $\Lambda^l$. 

After $\Pi^{l+1}$ has been determined, particles are resampled based on the weights $\Lambda(X)^{\psi_{l+1} - \psi_l}$.
Subsequently, to increase particle diversity, each particle is moved by a few steps of an MCMC scheme that targets $\Pi^{l+1}$. 
This can be done by proposing independent samples from $\Q$ and accepting/rejecting the proposals in a Metropolis-Hastings step based on the likelihood in Equation \eqref{eq: dPi^l}.
Alternatively, proposals can be localised based on the preconditioned Crank-Nicolson scheme, see Section \ref{sec: guided_MCMC} for details. 
The complete \textit{tempering and move} procedure is summarised in Algorithm \ref{alg: tempering_steps}. 
It is to be carried out at each observation time $t_i$ in place of Steps $2.$ and $3.$ in Algorithm \ref{alg: guided_particle_filter}.

\vspace{\baselineskip}
\begin{algorithm}[b]
    \LinesNotNumbered  
    \label{alg: tempering_steps}
    \caption{Tempering and move steps at fixed observation time $t_i$}
    \KwIn{Samples $X^{(j)}$ of $\Q = \Pi_{i-1} \otimes \calL^{g_i}$ with weights $\Lambda(X^{(j)})$, hyperparameters $\alpha$ and $N$}
    \KwOut{Evenly weighted samples $X^{(j)}$ of $\Pi = \Pi_i$}
    Initialise $l = 0$ and $\psi_l = 0$\;
    \While{$\psi < 1$}{
        \textbf{1. Compute next $\psi$:} set $l \gets l + 1$ and
        \[ \psi_{l} = \inf \{ \psi \in (\psi_{l-1}, 1] : \ESS_{l-1, l} \leq \alpha J \}; \]
        \textbf{2. Specify $\Pi^l$ and resample:} 
        \begin{itemize}
        	\item[(i)]	Compute weights and normalise 
        	 \[ w_l^{(j)} \propto \Lambda(X^{(j)})^{\psi_{l} - \psi_{l-1}} \quad \text{s.t.} \sum_{j=1}^J w_l^{(j)} = 1 \]

        	\item[(ii)] Resample $\{X^{(j)}\}_{j=1}^J$ according to weights $w_{l}^{(j)}$\;

        \end{itemize}

        \textbf{3. Move particles:} \\
        \For{$j = 1$ \KwTo $J$}{
           \For{$n = 1$ \KwTo $N$}{
            (i) Sample guided proposal: $ X' \sim \Q $\;
            (ii) Compute $M = \min\left(1, \left( \dfrac{\Lambda(X')}{\Lambda(X^{(j)})} \right)^{\psi_l} \right)$\;
            
            (iii) Draw $U \sim \text{Unif}(0,1)$\; 
        	\quad \quad \If{$U < M $}{
      		\quad \quad Set $X^{(j)} = X'$\;}
        }
        }
                
   }
\end{algorithm}

\begin{remark}
For ease of exposition, the MCMC move step in Algorithm \ref{alg: tempering_steps} is presented using independent proposals drawn from $X' \sim \Q$. 
These proposals can be localised in Wiener space by using the preconditioned Crank-Nicolson scheme as is further explained in Section \ref{sec: guided_MCMC}.
For practical purposes, this implies having to store at the observation time $t_i$ for each particle $X^{(j)}$ the sample of the Wiener process $W$ that corresponds to simulating the sample path $X^{(j)}_{[\tim, \ti]}$ following the dynamics in \eqref{eq: dXg2}.
Note that this is still carried out in an online setting since the stored Wiener paths relate to the evolution on $[\tim, \ti]$ only and may hence be discarded after each complete filtering step.
\end{remark}

\section{Guided smoothing via Metropolis-Hastings sampling}
\label{sec: guided_MCMC}

The following section introduces a solution to the smoothing problem via a Metropolis-Hastings (MH) algorithm that targets the smoothing distribution $\calL^h(X)$ based on proposals drawn from the guided distribution $\calL^g(X)$.

\subsection{The case of known $X_0$.}
Assume first that the initial distribution $\mu_0$ is given by a Dirac measure $\delta_{x_0}$ for some $x_0 \in H$. In that case, $X_0 = x_0$ remains unchanged under the changes of measure $\P^h$ and $\P^g$ and the Radon-Nikodym derivative in \eqref{eq: dPh_dPg} reduces to 
\begin{align*}
	\Phi(X) = \frac{g(0,x_0)}{h(0,x_0)} \Psi(X) 
\end{align*}
with $\Psi(X)$ as defined in Equation \eqref{eq: def_Psi}. A basic MH sampler that targets $\calL^h(X)$ can then be constructed by proposing samples from $\calL^g(X)$ and accepting or rejecting the proposals with an acceptance probability based on $\Phi(X)$. Note that $\Phi(X)$ includes the intractable term $h(0,x_0)$ which acts as a proportionality constant and cancels out in an MH acceptance step for a known $x_0$.

Sampling from $\calL^g(X)$ and evaluating $\Psi(X)$ relies on the function $G$. This function needs to be computed \textit{once} as a function of the complete set of observations on the basis of the backwards information filter in Equation \eqref{eq: g_BIF2}. Either of the parametrisations in Theorem \ref{thm: g_LtRtalphat} or Theorem \ref{thm: G_UtVt} can be used for this, though in practice the latter is usually the computationally favourable one. 

\vspace{\baselineskip}

To localise the proposals drawn from $\calL^g(X)$ we introduce random-walk-type proposals in the Wiener space by adapting a \textit{preconditioned Crank-Nicolson (pCN)} scheme (see \cite{Neal1999Regression}, \cite{Beskos08MCMC}, \cite{Cotter2013MCMC}) as follows.  

By Theorem \ref{thm: dPg}, drawing samples of $\calL^g(X)$ is equivalent to sampling the mild solution $X^g$ to Equation \eqref{eq: dXg}. 
The well-posedness of Equation \eqref{eq: dXg} implies the existence of a measurable map $\Gamma$ such that 
\begin{align}
\label{eq: solution_map}
	X^g = \Gamma(x_0, W), \quad \P^g\text{-a.s.,}
\end{align}
where $W$ is a $\P^g$-cylindrical Wiener process.
Assume now that the current value $X$ of the MH sampler is such that $X = \Gamma(x_0, V)$ for some process $V$. 
The pCN proposal is then given by:
\begin{itemize}
    \item[(i)] Draw a Wiener process $W$, independent of $V$;
    \item[(ii)] Set $V'= \sqrt{1 - \beta^2} V + \beta W$;
    \item[(iii)] Propose $X' = \Gamma(x_0, V')$.
\end{itemize}
The parameter $\beta \in (0,1]$ determines the size of the pCN step. For $\beta = 1$, independent proposals of $X^g$ are drawn.
The resulting MH sampler is given in Algorithm \ref{alg: MH_sampler1}.

\begin{algorithm}[t]
\label{alg: MH_sampler1}
\caption{MH Sampler of $\calL^h(X)$}
\LinesNotNumbered  
    \KwIn{Parameters $A,~F, ~Q$ and $x_0$, observations $y$, iterations $N, N_0$, step size $\beta$}
	\KwOut{Samples $\{X_i \}$ of $\calL^h(X)$ (after burn-in period $N_0$)}
	\textbf{Initialize:} \\
	Precompute $G$ by solving the backwards ODEs $(U_t)_{t \in [0,T]}$ and $(V_t)_{t \in [0,T]}$ in \eqref{eq: dU_t} and \eqref{eq: dV_t}\;
	
	Draw a Wiener process $V$ and set $X = \Gamma(x_0,V)$\;
	\For{$i = 0... ~N-1$}{
	 	\textbf{1. MH step in $X$} \\
                \quad(i) ~Draw a Wiener process $W$ and set $V'= \sqrt{1 - \beta^2} V + \beta W$\; 
            \quad(ii) Compute $X' = \Gamma(x_0,V')$ and $M = \min\left(1, \dfrac{\Psi(X')}{\Psi(X)} \right)$\;
            \quad(iii) Draw $U \sim \text{Unif}(0,1)$\; 
        \quad \quad \If{$U < M $}{
      \quad \quad Set $X = X'$ and $V = V'$\;
    }
 	\textbf{2. Save current state $X$} \\
    \quad \If{$i > N_0 $}{
      \quad \quad Set  $X_i = X$\;

    } 
  	}
\end{algorithm}

\begin{remark}
\label{rem: solution_map}
	Since $W$ is a cylindrical Wiener process on $H$, it cannot be treated directly as an $H$-valued process. However, $W$ can be defined as a Wiener process taking values in a larger Hilbert space $H'$ such that $H \hookrightarrow H'$ is embedded in a Hilbert-Schmidt way. Note that such an embedding always exists, see \cite{Hairer2009Introduction}, Section 4.4 for details.
	The solution map $\Gamma$ in \eqref{eq: solution_map} is then to be understood as a measurable mapping $\Gamma: H \times C([0,T];H') \to C([0,T];H)$. 
	
\end{remark}

\begin{remark}
\label{rem: bbV^x0}
	In the spirit of Remark \ref{rem: solution_map}, the MH sampler in Algorithm \ref{alg: MH_sampler1} can be rephrased as targeting a measure $\mathbb{V}^{x_0}$  that is absolutely continuous with respect to a Wiener measure $\mathbb{W}$ on $C([0,T];H')$. For this, let $V$ be a $\P^g$-cylindrical Wiener process on $H$, taking values in the enlargement $H'$ of $H$.
	 Note that the measures $\P^g$ and $\P^h$ depend on the fixed initial state $x_0$.
	Define $\mathbb{V}^{x_0} = \calL^h(V)$ as the law of $V$ under $\P^h$ on $C([0,T];H')$. It immediately follows that
	\begin{align}
	\label{eq: dV^x0}
		\dfrac{\df \mathbb{V}^{x_0}}{\df \mathbb{W}}(V) = \dfrac{\df \calL^h(V)}{\df \calL^g(V)}(V) = \Phi(\Gamma(x_0,V)) = \frac{g(0,x_0)}{h(0,x_0)} \Psi(\Gamma(x_0,V)). 
	\end{align}
	On the other hand, it holds for any bounded and measurable functional $f$ that 
	\begin{align*}
		\E^h\left[ f(\Gamma(x_0,V)) \right] &= \E^g[ f(\Gamma(x_0,V)) \Phi(\Gamma(x_0,V))] =  \E^g[ f(X) \Phi(X)] = \E^h[f(X)].
	\end{align*}
	Hence, $\mathbb{V}^{x_0}$ is the unique measure on $C([0,T];H')$ that is absolutely continuous with respect to $\mathbb{W}$ such that, if $V \sim \mathbb{V}^{x_0}$, then $\Gamma(x_0,V)$ equals in law the smoothing distribution of $X$ given $x_0$.
\end{remark}

\subsection{The case of unknown $X_0$.}
Let now $X_0$ be unknown with prior distribution $\mu_0$ and suppose the following assumption is satisfied.
\begin{assumption}
\label{ass: nu_0}
	There exists a Gaussian measure $\nu_0$ on $(H, \calB(H))$ such that $\mu_0 \ll \nu_0$ with density $\rho(x_0) := \tfrac{\df \mu_0}{\df \nu_0}(x_0)$.
\end{assumption}
A simple calculation shows that the conditional law $\mu_0^h = \calL^h(X_0)$ of $X_0$ given $Y=y$ remains absolutely continuous with respect to $\nu_0$ with density 
\begin{align}
\label{eq: dmu_0^h/dnu_0^h}
	\frac{\df \mu_0^h}{\df \nu_0}(x_0) = \frac{h(0,x_0) \rho(x_0)}{C^h}.
\end{align}
Alternatively, using Bayesian notation and recalling that $h(0,x_0) = p(y \mid x_0)$ and $C^h = \E[h(0,X_0)] = p(y)$, Equation \eqref{eq: dmu_0^h/dnu_0^h} is simply a restatement of the Bayes theorem.

The decomposition $X = \Gamma(X_0,V)$ introduced in the previous section suggests to sample from the full smoothing distribution of $X$ by targeting the  joint measure $\mathbb{V}\otimes \mu_0^h$ defined through the disintegration $\mathbb{V}\otimes \mu_0^h := \mathbb{V}^{x_0}(\df V) \mu_0^h(\df x_0)$ on $C([0,T];H') \otimes H$. Sampling from the disintegrated measure corresponds to first sampling an initial state $X_0 = x_0$ given the observations $y$ and subsequently drawing a sample $V \sim \mathbb{V}^{x_0}$ of the measure $\mathbb{V}^{x_0}$ introduced in Remark \ref{rem: bbV^x0}. The sample $\Gamma(x_0,V)$ then represents a realisation from the smoothing distribution of $X$.

Plugging in the densities in \eqref{eq: dV^x0} and \eqref{eq: dmu_0^h/dnu_0^h} shows that $\mathbb{V}\otimes \mu_0^h$ is absolutely continuous with respect to $\mathbb{W} \otimes \nu_0$ with Radon–Nikodym derivative
\begin{align}
\label{eq: dVmu0dWnu0}
\dfrac{\df (\mathbb{V}\otimes \mu_0^h)}{\df (\mathbb{W} \otimes \nu_0) }(V, x_0) &= \dfrac{g(0,x_0) \rho(x_0)}{C^h} \Psi(\Gamma(x_0,V)).
\end{align}
This density can be used to construct a Gibbs sampler targeting $\mathbb{V}\otimes \mu_0^h$ as summarised in Algorithm \ref{alg: Gibbs_sampler1}.

\begin{algorithm}[b]
\label{alg: Gibbs_sampler1}
\caption{Gibbs Sampler of $\mathbb{V}\otimes \mu_0^h$}
\LinesNotNumbered  
    \KwIn{Parameters $A,~F, ~Q$ and $\rho$, observations $y$, iterations $N, N_0$, step sizes $\beta, \beta_0$}
	\KwOut{Samples $\{(V,X_{0})_i \}$ of $\mathbb{V}\otimes \mu_0^h$ (after burn-in period $N_0$)}
	\textbf{Initialise:} \\
	Precompute $G$ by solving the backwards ODEs $(U_t)_{t \in [0,T]}$ and $(V_t)_{t \in [0,T]}$ in \eqref{eq: dU_t} and \eqref{eq: dV_t}\;
	
	Draw a Wiener process $V$ and $X_0 \sim \nu_0$\;
	\For{$i = 0... ~N-1$}{
 	\textbf{1. Update $V \mid X_0$} \\
                \quad(i) ~Draw a Wiener process $W$ and set $V'= \sqrt{1 - \beta^2} V + \beta W$ \; 
  			\quad(ii) ~Compute $M = \min\left(1, \dfrac{\Psi(\Gamma(X_0,V'))}{\Psi(\Gamma(X_0,V))} \right)$\;
			\quad(iii) Draw $U \sim \text{Unif}(0,1)$\; 
        \quad \quad \If{$U < M $}{
      \quad \quad Set  $V = V'$\;
    }
      \textbf{2. Update $X_0 \mid V$} \\
                \quad(i) ~Draw $z \sim \nu_0$ and set $X_0'= \sqrt{1 - \beta_0^2} X_0 + \beta_0 z$ \; 
  			\quad(ii) ~Compute $M = \min\left(1, \dfrac{\rho(X_0') g(0,X_0')  \Psi(\Gamma(X_0',V))}{\rho(X_0) g(0,X_0)  \Psi(\Gamma(X_0,V))} \right)$\;
			\quad(iii) Draw $U \sim \text{Unif}(0,1)$\; 
        \quad \quad \If{$U < M $}{
      \quad \quad Set  $X_0 = X_0'$\;
    }  
    \textbf{3. Save current state $(V, X_0)$} \\
    \quad \If{$i > N_0 $}{
      \quad \quad Set  $(V, X_0)_i = (V, X_0)$\;

    }  

  	}
\end{algorithm}

\subsection{Parameter estimation}
Suppose the parameters $A, F, Q$ and $\mu_0$ of the dynamics in Equation \eqref{eq: dXt} depend on some unknown parameter $\theta \in \R^p$. 
This dependence carries over to the majority of the processes and functions that we have discussed so far. 
However, for notational clarity, we only make this dependence explicit in certain cases by adding a subscript $\theta$ onto the dependent variables. 
In particular, let us highlight the dependence of the functions $h_{\theta}$, $g_{\theta}$ and $\Gamma_{\theta}$ as well as the measures $\mathbb{V}^{x_0}_{\theta}$ and $\mu_{0,\theta}$ on $\theta$.

The unknown parameter $\theta$ is to be inferred jointly with the unobserved states of $X$, based on the observations $y$. Our methodology makes it natural to do so in a fully Bayesian approach. 
For this, assume that $\theta$ is the realisation of some $\calF_0$-measurable random variable $\Theta$ with Lebesgue density $\pi(\theta)$. With slight abuse of notation, denote by $\pi^y$ the conditional distribution of $\theta$ given $y$. 
We then aim to sample from the joint posterior measure defined by $\mathbb{V}\otimes \mu_0^h \otimes \pi^y := \mathbb{V}_{\theta}^{x_0}(\df V) \mu_{0,\theta}^{h_{\theta}}(\df x_0) \pi^y(\df \theta)$.
Following Equation \eqref{eq: dVmu0dWnu0} and the Bayes theorem, $\mathbb{V}\otimes \mu_0^h \otimes \pi^y$ has density with respect to $\mathbb{W}\otimes \nu_0 \otimes \text{Leb}(\R^p)$ given by 
\begin{align}
\begin{split}
\label{eq: full_posterior_density}
\frac{\df (\mathbb{V}\otimes \mu_0^h \otimes \pi^y)}{\df (\mathbb{W}\otimes \nu_0 \otimes \text{Leb}(\R^p))}(V, x_0, \theta) &= \dfrac{g_{\theta}(0,x_0) \rho_{\theta}(x_0)}{C^{h_{\theta}}} \Psi_{\theta}(\Gamma_{\theta}(x_0,V)) \dfrac{p(y \mid \theta) \pi(\theta)}{\int p(y \mid \theta) \pi(\theta) \df \theta} \\ 
&\propto g_{\theta}(0,x_0) \rho_{\theta}(x_0) \pi(\theta) \Psi_{\theta}(\Gamma_{\theta}(x_0,V)),
\end{split}
\end{align}
where we used that $C^{h_{\theta}} = \E[h_{\theta}(0,X_0)] = p(y \mid \theta)$.
An MCMC algorithm sampler that targets $\mathbb{V}\otimes \mu_0^h \otimes \pi^y$ can hence be defined by adding a Gibbs sampling step $\theta \mid V, x_0$ to Algorithm \ref{alg: Gibbs_sampler1}. The details are summarised in Algorithm \ref{alg: Gibbs_sampler2}.

\begin{algorithm}[t]
\label{alg: Gibbs_sampler2}
\caption{Gibbs Sampler of $\mathbb{V}\otimes \mu_0^h \otimes \pi^y$}
\LinesNotNumbered  
    \KwIn{Parameters $A_{\theta},~F_{\theta}, ~Q_{\theta}$ and $\rho_{\theta}$, observations $y$, prior $\pi$, proposal kernel $q$, \\ iterations $N, N_0$, step sizes $\beta, \beta_0$}
	\KwOut{Samples $\{(V,X_{0}, \theta)_i \}$ of $\mathbb{V}\otimes \mu_0^h \otimes \pi^y$ (after burn-in period $N_0$)}
	\textbf{Initialize:} \\
	Draw $\theta \sim \pi$, a Wiener process $V$ and $X_0 \sim \rho_{\theta} \df \nu_0$;\\
	Compute $G_{\theta}$ by solving the backwards ODEs $(U_t)_{t \in [0,T]}$ and $(V_t)_{t \in [0,T]}$ in \eqref{eq: dU_t} and \eqref{eq: dV_t}\;
	
	\For{$i = 0... ~N-1$}{
 	\textbf{1. Update $V \mid X_0, \theta$} \\
 	See Algorithm \ref{alg: Gibbs_sampler1}, Step 1 with $(\Psi, \Gamma) = (\Psi_{\theta}, \Gamma_{\theta})$;\\
    \textbf{2. Update $X_0 \mid V, \theta$} \\  
     See Algorithm \ref{alg: Gibbs_sampler1}, Step 2 with $(\rho_{\theta}, g_{\theta}, \Psi, \Gamma) = (\rho_{\theta}, g_{\theta}, \Psi_{\theta}, \Gamma_{\theta})$; \\
    \textbf{3. Update $\theta \mid X_0, V$} \\  
     \quad(i) ~Draw $\theta' \sim q(\cdot \mid \theta)$ \; 
    \quad(ii) ~If $A, Q$ depend on $\theta$: Compute $G_{\theta'}$ by solving Eq. \eqref{eq: dU_t} and Eq. \eqref{eq: dV_t};\\
    \quad(iii) ~Compute $M = \min\left(1, \dfrac{q(\theta \mid \theta') \pi(\theta') \rho_{\theta'}(X_0) g_{\theta'}(0,X_0)  \Psi_{\theta'}(\Gamma_{\theta'}(X_0,V))}{q(\theta' \mid \theta) \pi(\theta)  \rho_{\theta}(X_0) g_{\theta}(0,X_0) \Psi_{\theta}(\Gamma_{\theta}(X_0,V))} \right)$\;
			\quad(iii) Draw $U \sim \text{Unif}(0,1)$\; 
        \quad \quad \If{$U < M $}{
      \quad \quad Set  $\theta = \theta'$ and $G_{\theta} = G_{\theta'}$\;
    }
    \textbf{4. Save current state $(V, X_0, \theta)$} \\
    \quad \If{$i > N_0 $}{
      \quad \quad Set  $(V, X_0, \theta)_i = (V, X_0, \theta)$\;

    }  

  	}
\end{algorithm}

\begin{remark}
	If $A$ and $Q$ are independent of the unknown parameter $\theta$, one only needs to solve the backwards ODEs $(U_t)_{t \in [0,T]}$ and $(V_t)_{t \in [0,T]}$ in \eqref{eq: dU_t} and \eqref{eq: dV_t} once. 
	This severely eases the computational burden of Algorithm \ref{alg: Gibbs_sampler2} for the highly relevant case that the parameter $\theta$ of interest parametrises only the nonlinearity $F = F_{\theta}$.
\end{remark}

\subsection{Non-Gaussian observations}
So far we have assumed the observations $Y_i$ given $X_{\ti}$ to be Gaussian with conditional density $k(X_{\ti}, \cdot) = f(\cdot; L X_{\ti}, \Sigma)$.
It is straightforward to let the kernel $k$ depend on the observation time $t_i$ by setting $k_i(X_{\ti}, \cdot) = f(\cdot; L_i X_{\ti}, \Sigma_i)$ for time varying observation operators $L_i: H \to \R^{m_i}$ and extrinsic noise matrices $\Sigma_i \in \R^{m_i \times m_i}$.

More crucially, our proposed methodology can be extended to the case of non-Gaussian observations in the following way. 
Let $l_i$ be some Markov kernel from $H$ to $\R^{m_i}$ and assume observations $Y_i$ are distributed following 
\begin{align}
\label{eq: ap192873}
	Y_i \mid X_{\ti} \sim l_i(X_{\ti}, \cdot).
\end{align}

Substituting $l_i(x_{\ti}, y_i)$ for $k(x_{\ti}, y_i)$ in the definition of the $h$-function in Equation \eqref{eq: def_h} preserves the properties of $h(t,x)$ established in Propositions \ref{prop: h_properties} and \ref{prop: Eht_martingale} as well as Theorem \ref{thm: dPh}.
Hence, the smoothing distribution of $X$ under the observation scheme in \eqref{eq: ap192873} is the law of $X$ under the change of measure 
\begin{align*}
	\dfrac{\df \P^h}{\df \P}(X) = E^h_T = \frac{1}{C^{h}} \left( \prod_{i=1}^{n} l_i(X_{\ti},y_i) \right)
\end{align*}
on $\calF_T$. 
For the guided distribution of $X$, the Markov kernel $k_i$ needs to remain Gaussian in order to preserve the tractable expressions for $g(t,x)$ obtained in Section \ref{sec: guided_measure}. It is therefore defined under the change of measure
\begin{align*}
	\dfrac{\df \P^g}{\df \P}(X) = E^g_T = \frac{1}{C^{g}} \left( \prod_{i=1}^{n} k_i(X_{\ti},y_i) \right) \Psi(X).
\end{align*}
In total, the R.N. derivative of the smoothing distribution with respect to the guided distribution then takes on the form 
\begin{align*}
	\Phi(X) = \frac{C^g}{C^h} \left( \prod_{i=1}^n \dfrac{l_i(X_{\ti}, y_i)}{k_i(X_{\ti}, y_i)} \right) \Psi(X).
\end{align*}

\section{Case study: stochastic Amari equation}
\label{sec: amari}

\subsection{Stochastic Amari equation}
In the following section, we evaluate our proposed methodology for filtering, smoothing, and parameter estimation in the context of a partially observed stochastic Amari model. 
For this, let $H = L^2(D)$ for some bounded spatial domain $D \subset \R^d$. For ease of exposition, we assume the domain $D$ to be rectangular and impose periodic boundary conditions on it.
Consider then the mild solution $X$ to the stochastic Amari equation
\begin{align}
\begin{split}
\label{eq: dX_Amari}
\df X_t &= \left[ - X_t + F(X_t) \right] \df t + Q^{\frac12} \df W_t, \quad  t \geq 0,\\ 
 \quad X_0 &= x_0,
\end{split}
\end{align}
with nonlinearity $F: L^2(D) \to L^2(D)$ defined by 
\begin{align}
\label{eq: F_Amari}
F(X_t) &= \int_D k_F(\xi, \xi') f(X_t(\xi')) \df \xi'.
\end{align}
The process $X_t$ models a stochastic neural field on $D$, with $X_t(\xi)$ representing the average neural activity at location $\xi \in D$ and time $t \geq 0$. 
 
The kernel $k_F(\xi,\xi')$ captures neural connectivity between locations $\xi$ and $\xi'$, whereas $f$ is an activation function, typically a sigmoid-like function or similar. 
In our experiments, we consider 
\begin{align}
\label{eq: def_kf}
\begin{split}
		k_F(\xi, \xi') &= \frac{A}{\sqrt{\pi}} \exp\left( -(|\xi - \xi'| - \delta)^2 \right) - \frac{A}{\sqrt{\pi} B} \exp\left( -\left( \frac{|\xi - \xi'| - \delta}{B} \right)^2 \right) \\
		f(x) &= \frac{1}{1 + \exp\big(-\eta \, x + \zeta \big)} - \frac{1}{1 + \exp(\zeta)},
\end{split}
\end{align}
for a set of parameters $A >0, B >0, \eta >0, \zeta >0$ and $\delta \in \R$.

For the mild solution $X$ to exist, one may consider any trace-class operator $Q$ on $L^2(D)$. We let $Q$ be a diagonal operator defined by its eigenvalues 
\begin{align}
\label{eq: def_qj}
	q_l = \sigma_0^2 (\rho_0^{-2} + (2 \pi l)^2)^{-(d/2 + \eta_0)}
\end{align}
with respect to the Fourier basis functions $(e_l)_{l=1}^{\infty}$ on $L^2(D)$. The operator $Q$ is then the Hilbert-Schmidt integral operator defined by the Matérn kernel on $D$, see for example \cite{Borovitskiy2020Matern}, Theorem 5.

\vspace{\baselineskip}

\paragraph{\bfseries Observation and guiding scheme}
We consider observations $Y_i \in \R^m$ of the latent process $X$ at observation times $(t_i)_{i=1}^n$ defined as in Eq. \eqref{eq: def_Y2}. The observation operator $L: L^2(D) \to \R^m$ we set as 
\begin{align}
\label{eq: def_L}
L x = \left( \int_D x(\xi) \om_j(\xi) \, \df \xi \right)_{j =1}^m = \left( \langle x, \om_j \rangle  \right)_{j =1}^m \in \R^m
\end{align}
for a set of positive, square-integrable functions $\om_j \in L^2(D), j = 1, \ldots m$.
The adjoint operator $L^*$ then takes the form 
\begin{align*}
	L^* y = \sum_{j=1}^m y_j \om_j \in L^2(D), \quad y \in \R^m.
\end{align*}
Specifically, we let $\om_j = \tfrac{1}{|D_j|}\mathbbm{1}_{D_j}$ for a series of mutually disjoint subsets $D_j \subset D$, $j =1 \ldots m$. The operator $L$ then represents observing localised spatial averages of $X_{t_i}(\xi)$ over the domains $D_j$ at observation time $t_i$.

\begin{figure}[b]
    \centering
    \includegraphics[width=0.9\textwidth]{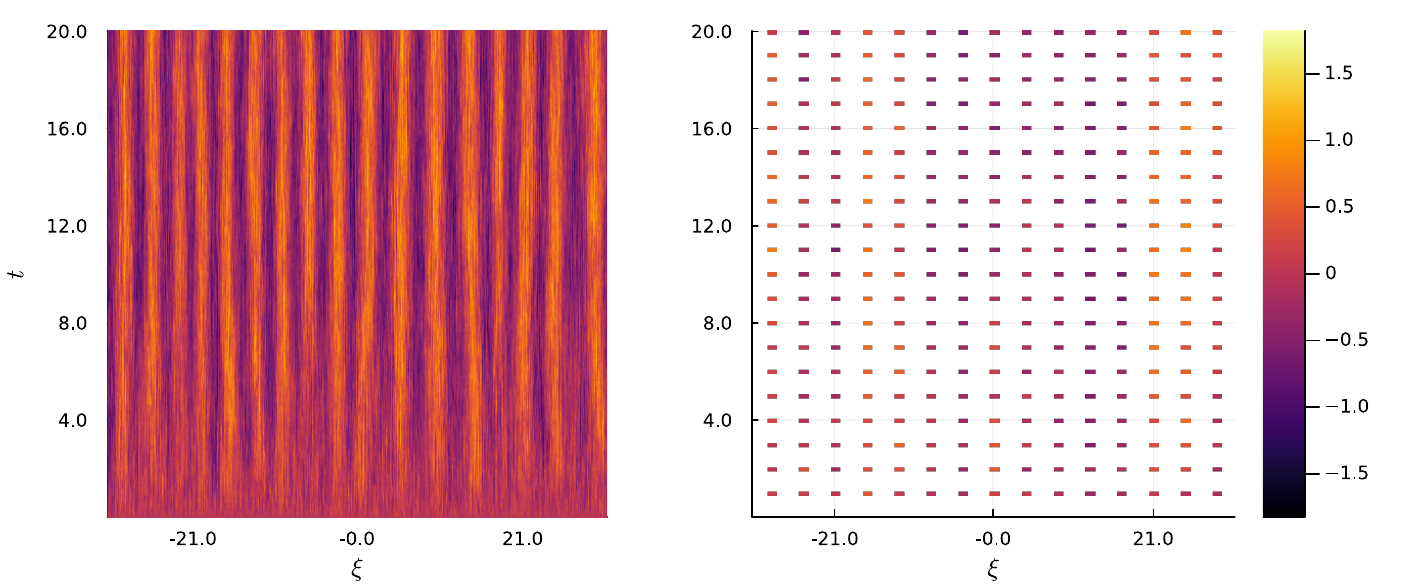}
    \caption{Data generating process of Experiment 1. Colour intensity represents the value of the process and observations, whereas time $t$ is plotted along the y-axis and spatial coordinate $\xi$ along the x-axis. Left: Heatmap of the sample path $X(t,\xi)$ of Eq. \eqref{eq: dX_Amari} with parameters as in \eqref{eq: params} and $\delta_0 = 0$. Right: Observations $Y_i$ at $n = 20$ observation times with $m = 15$ local averages of $X(\ti,\xi)$ per observation.}
    \label{fig: obs_forward_filtering_A}
\end{figure}

\begin{figure}[b]
    \centering
    \includegraphics[width=0.9\textwidth]{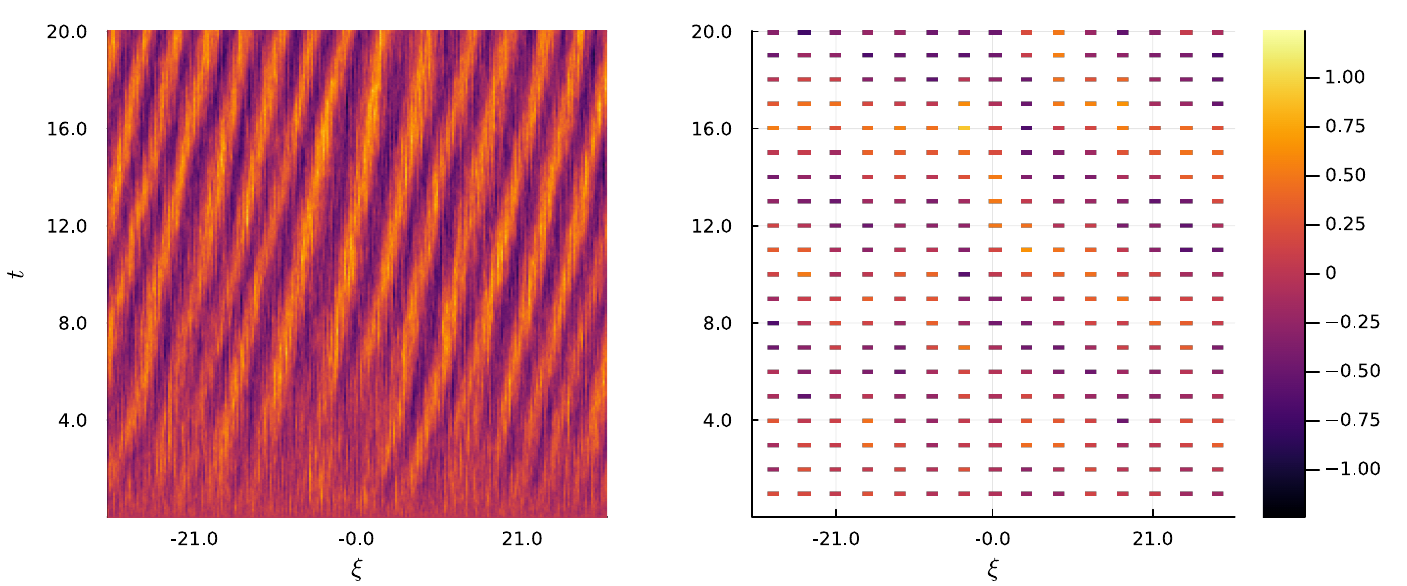}
    \caption{Data generating process of Experiment 2. Colour intensity represents the value of the process and observations, whereas time $t$ is plotted along the y-axis and spatial coordinate $\xi$ along the x-axis. Left: Heatmap of the sample path $X(t,\xi)$ of Eq. \eqref{eq: dX_Amari} with parameters as in \eqref{eq: params} and $\delta_0 = 0.5$. Right: Observations $Y_i$ at $n = 20$ observation times with $m = 15$ local averages of $X(\ti,\xi)$ per observation.}
    \label{fig: obs_forward_filtering_B}
\end{figure}

\vspace{\baselineskip}
To construct the guiding distribution, we set $(a_t)_{t \geq 0}$ in \eqref{eq: dZt} as $a_t \equiv 0$.
For the one-step ahead guiding distribution we receive the following closed-term expression. The computations can be found in Appendix \ref{app: B}.
\begin{proposition}
\label{prop: amari_G}
In the setup above, the guiding term $G_i$ of Eq. \eqref{eq: dXg2} at observation time $t_i$ is given by 
\begin{align*}
	G_i(t,x) &= \exp(- (t - t_i)) L^* \left( \Sigma + \frac{1 - \exp(- 2 (t - t_i))}{2} L Q L^* \right)^{-1} \left( y_i - \exp(-(t - t_i)) L x \right)\end{align*}
where $L Q L^*$ is the symmetric, positive definite $m \times m$-matrix with entries
\begin{align*}
	(L Q L^*)[i,j] = \sum_{l=1}^{\infty} q_l \langle \om_i, e_l \rangle \langle \om_j, e_l \rangle.
\end{align*}
\end{proposition}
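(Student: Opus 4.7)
The plan is to read off $G_i$ directly from Remark \ref{rem: filtering_G}, which gives, on $[t_{i-1}, t_i]$,
\begin{align*}
G_i(t,x) = L_t^* R_t^{-1} \bigl( y_i - L_t x - \alpha_t \bigr),
\end{align*}
with $L_t = L S_{t_i - t}$, $R_t = \Sigma + L Q_{t_i - t} L^*$, and $\alpha_t = L \int_t^{t_i} S_{t_i - s} a_s \, ds$. The task then reduces to specialising each quantity to the Amari setting in \eqref{eq: dX_Amari} and assembling the result.

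The first step is to extract the ingredients of \eqref{eq: dX_Amari}. Its generator is $A = -\mathrm{Id}$, so the semigroup is scalar and self-adjoint, $S_u = e^{-u} \mathrm{Id}$. With $a_t \equiv 0$ one immediately obtains $\alpha_t \equiv 0$ and $L_t = e^{-(t_i - t)} L$. Since $S_u$ commutes with $Q$, the covariance \eqref{eq: def_Qt} integrates explicitly,
\begin{align*}
Q_{t_i - t} = \int_0^{t_i - t} e^{-2u} Q \, du = \frac{1 - e^{-2(t_i - t)}}{2}\, Q,
\end{align*}
so that $R_t = \Sigma + \tfrac{1}{2}\bigl(1 - e^{-2(t_i - t)}\bigr) L Q L^*$. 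Substituting these expressions into the formula for $G_i$ yields the announced closed form.

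To derive the entries of $L Q L^*$, I would use the explicit adjoint $L^* y = \sum_{j=1}^m y_j \om_j$, so that $L^* e_j = \om_j$ for the canonical basis $e_j \in \R^m$ and the $(i,j)$-entry equals $\langle Q \om_j, \om_i \rangle$. Expanding $\om_j = \sum_l \langle \om_j, e_l \rangle e_l$ in the Fourier basis and using $Q e_l = q_l e_l$ then gives
\begin{align*}
(L Q L^*)[i,j] = \sum_{l=1}^{\infty} q_l \langle \om_i, e_l \rangle \langle \om_j, e_l \rangle.
\end{align*}
Absolute convergence of the series is a consequence of $(q_l)_l$ being bounded together with Bessel's inequality $\sum_l \langle \om_j, e_l \rangle^2 \leq \|\om_j\|^2 < \infty$. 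Symmetry and positive semi-definiteness follow from those of $Q$, so $R_t$ is invertible for every $t \leq t_i$ because $\Sigma$ is positive definite.

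There is no substantial obstacle here; the proof is a direct specialisation of the general parametrisation established in Theorem \ref{thm: g_LtRtalphat}. The one simplification that does real work is the commutation allowing $Q_{t_i - t}$ to reduce to a scalar multiple of $Q$; this is specific to the scalar generator $A = -\mathrm{Id}$ and would force one back onto the backward Riccati route of Theorem \ref{thm: G_UtVt} for a less trivial generator.
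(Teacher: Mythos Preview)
Your proposal is correct and follows essentially the same route as the paper's proof: both start from the one-step-ahead formula in Remark~\ref{rem: filtering_G}, specialise the semigroup to $S_u = e^{-u}\mathrm{Id}$ (since $A = -\mathrm{Id}$), integrate $Q_{t_i-t}$ explicitly, and then read off the entries of $LQL^*$ via the Fourier expansion of the $\om_j$. Your version is slightly more explicit in handling $\alpha_t \equiv 0$ and in noting convergence of the series and invertibility of $R_t$, but the argument is otherwise identical.
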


\subsection{Filtering}

\paragraph{\bfseries Model setup}
For the filtering problem, we run two experiments with different choices $\delta_0 = 0$ and $\delta_1 = 0.5$ of the model parameter $\delta$. 
The remaining model parameters in \eqref{eq: def_kf} and \eqref{eq: def_qj} are set in both experiments as
\begin{align}
\label{eq: params}
	[A, B, \eta, \zeta, \sigma_0, \rho_0, \eta_0] = [4.0, 1.5, 10.0, 0.5, 3 \cdot 10^5, 5 \cdot 10^{-5}, 1.0]
\end{align}
and the initial state is set to be $X_0 \equiv 0$. Under this parametrisation, the stochastic Amari equation exhibits either steady state Turing patterns ($\delta_0 = 0$) or travelling waves ($\delta_1 = 0.5$) emerging from the initial state $X_0$. 

Experiments are carried out on data generated by forward simulating the solution $X$ to Equation \eqref{eq: dX_Amari} on the spatiotemporal domain $[0,T] \times D = [0,20] \times [-10 \pi, 10\pi]$ with step sizes $\Delta_t = 0.02$ and $\Delta_x = 20 \pi / 2^8$.
\vspace{\baselineskip}

\begin{figure}[htbp]
    \centering
    \subfigure[All methods, $n=20$.]{%
        \includegraphics[width=0.45\textwidth]{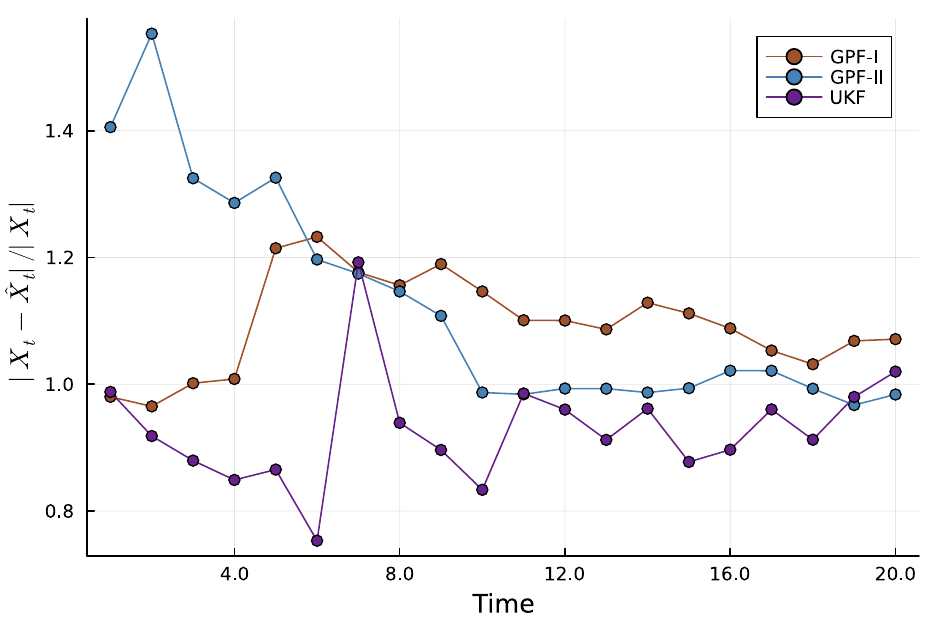}
    }
    \hspace{0.25cm}
    \subfigure[GPF-I, $n = 20$, $10$, $5$.]{%
        \includegraphics[width=0.45\textwidth]{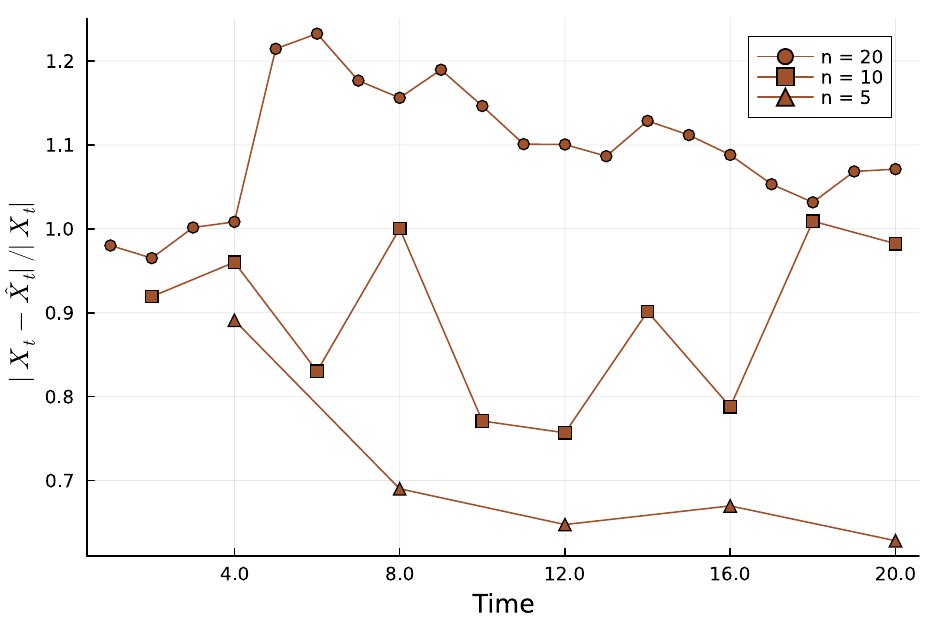}
    }
    \vspace{0.25cm}
    
    \subfigure[GPF-II, $n = 20$, $10$, $5$.]{%
        \includegraphics[width=0.45\textwidth]{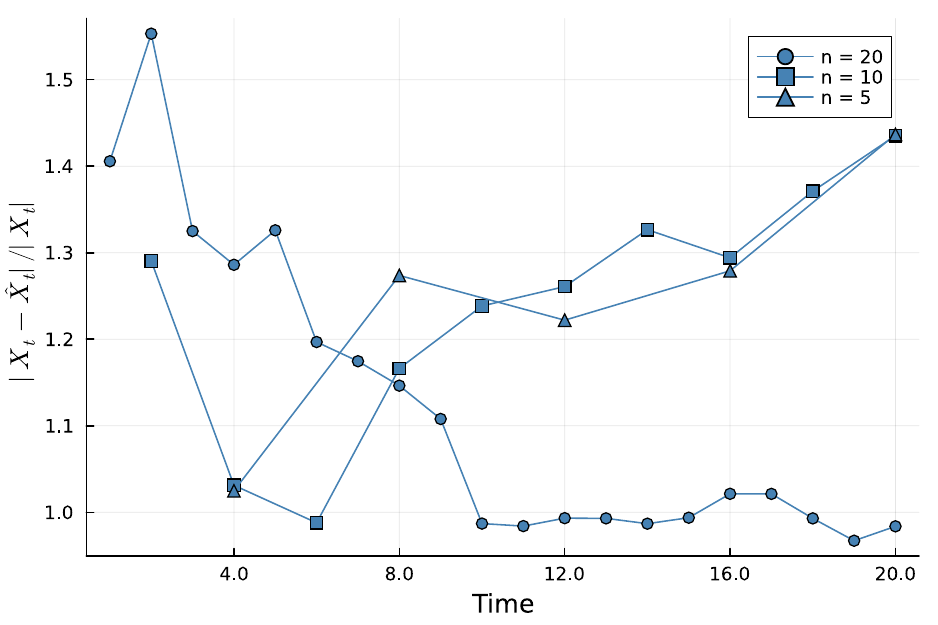}
    }
    \hspace{0.25cm}
    \subfigure[UKF, $n = 20$, $10$, $5$.]{%
        \includegraphics[width=0.45\textwidth]{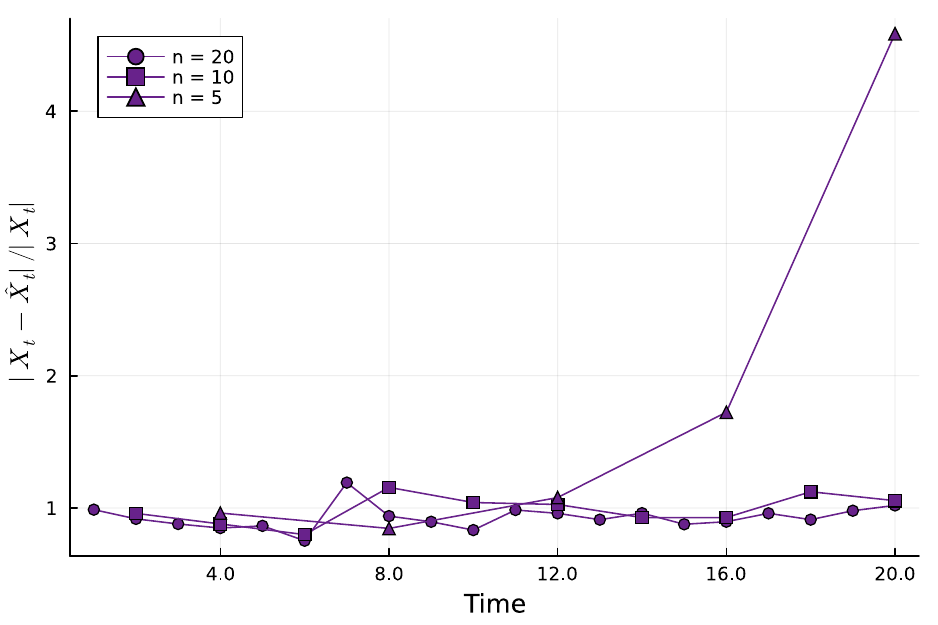}
    }

    \caption{Relative errors $|\hat{X}_{\ti} - X_{\ti}|/|X_{\ti}|$ at observation times $\ti$ for the filtering estimates $\hat{X}_{\ti}$ based on the data set of Figure \ref{fig: obs_forward_filtering_A} and the three methods GPF-I (our proposed method), GPF-II (method with guiding term in \eqref{eq: Gtld}) and UKF. Subfigure (a): a comparison of estimation errors for all three methods in the dense observation case.
    Subfigures (b) - (d): estimation errors of individual methods across subsampled datasets.}
    \label{fig: filtering_errors_A}
\end{figure}

\begin{figure}[htbp]
    \centering
    \subfigure[GPF-I, $n=20$.]{%
        \includegraphics[width=0.45\textwidth]{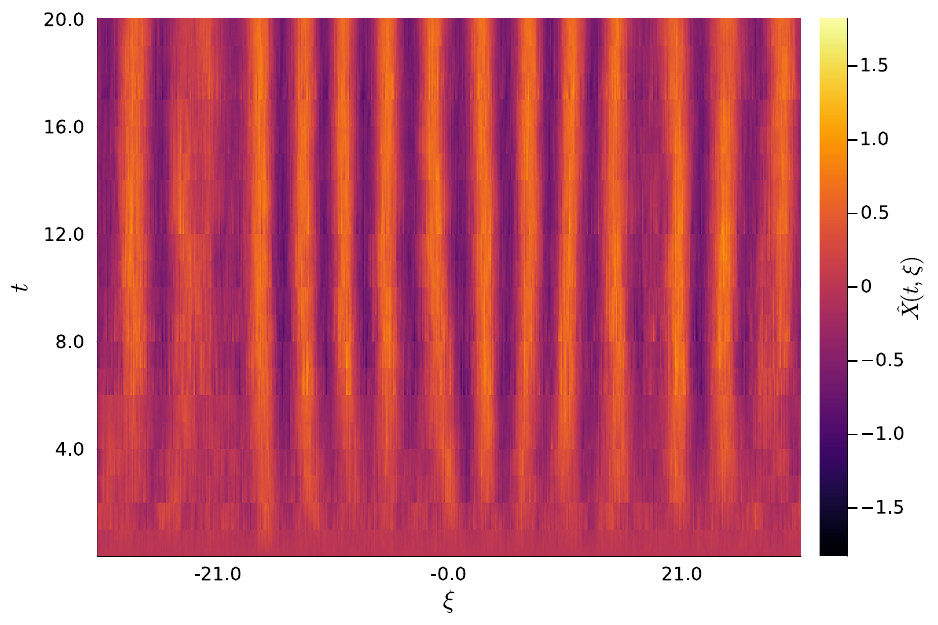}
    }
    \hspace{0.25cm}
    \subfigure[GPF-I, $n=20$.]{%
        \includegraphics[width=0.45\textwidth]{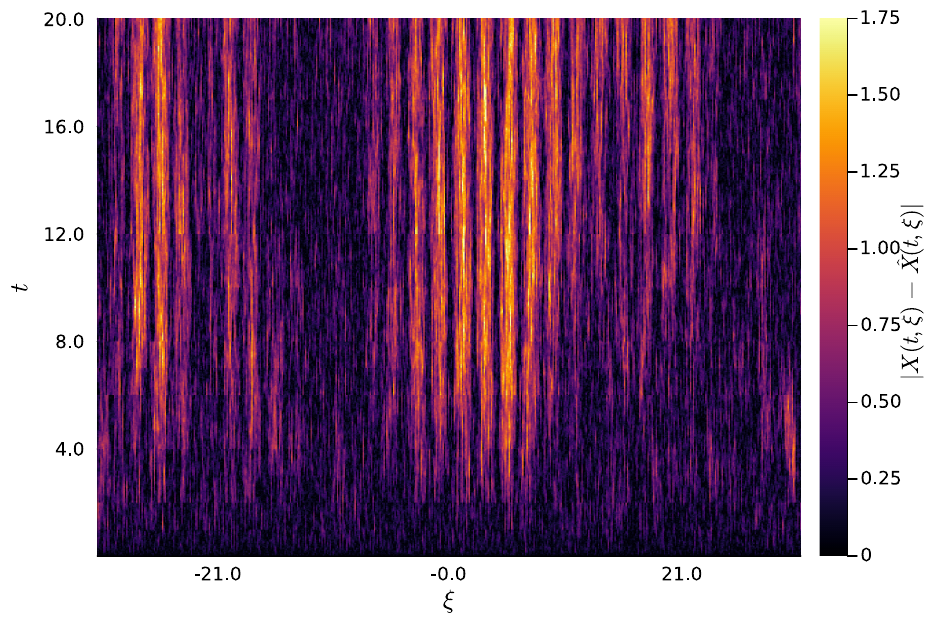}
    }
    \vspace{0.15cm}
        \subfigure[GPF-II, $n = 20$.]{%
        \includegraphics[width=0.45\textwidth]{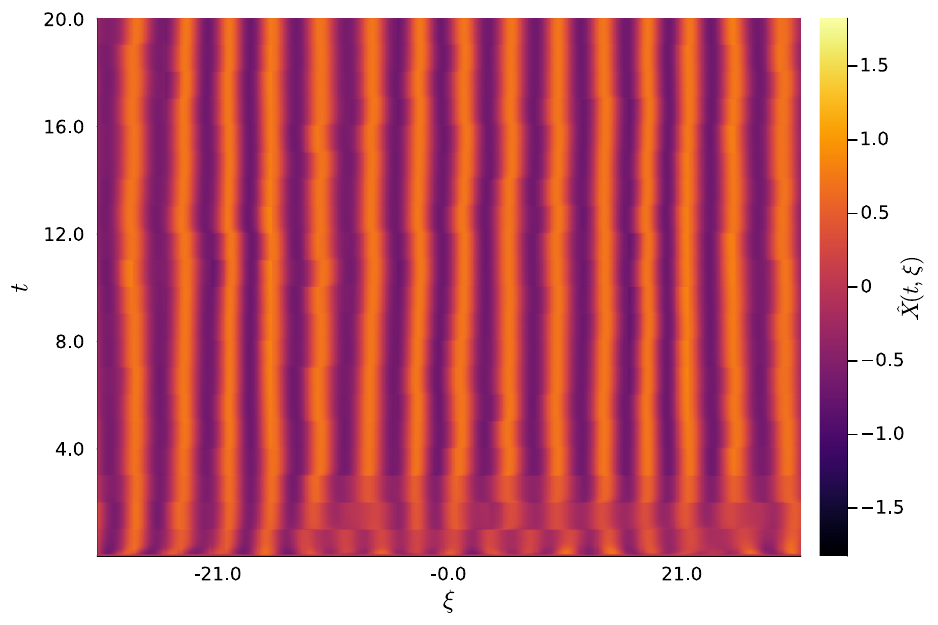}
    }
    \hspace{0.25cm}
    \subfigure[GPF-II, $n = 20$.]{%
        \includegraphics[width=0.45\textwidth]{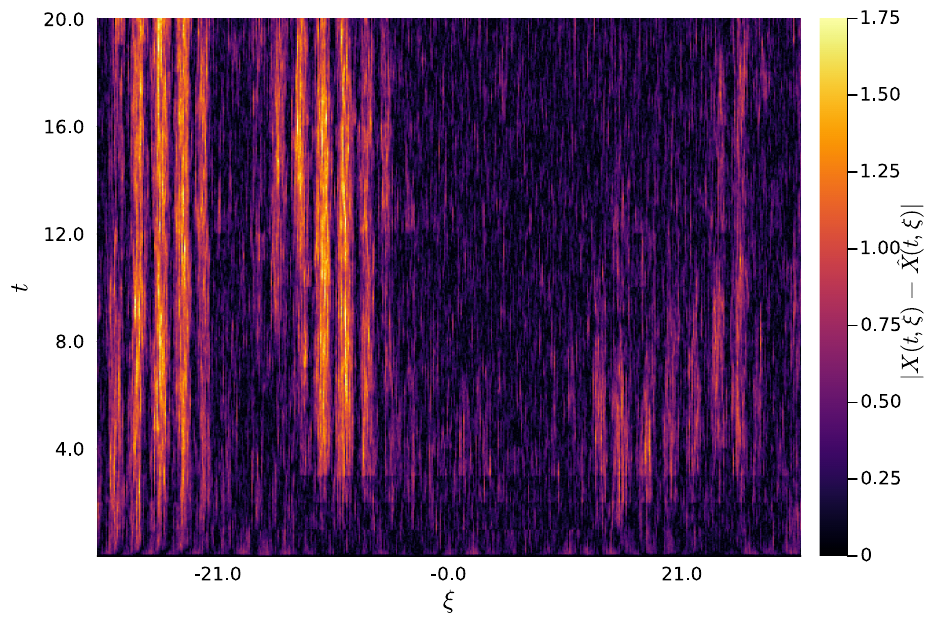}
    }
    \vspace{0.15cm}
        \subfigure[GPF-I, $n = 5$.]{%
        \includegraphics[width=0.45\textwidth]{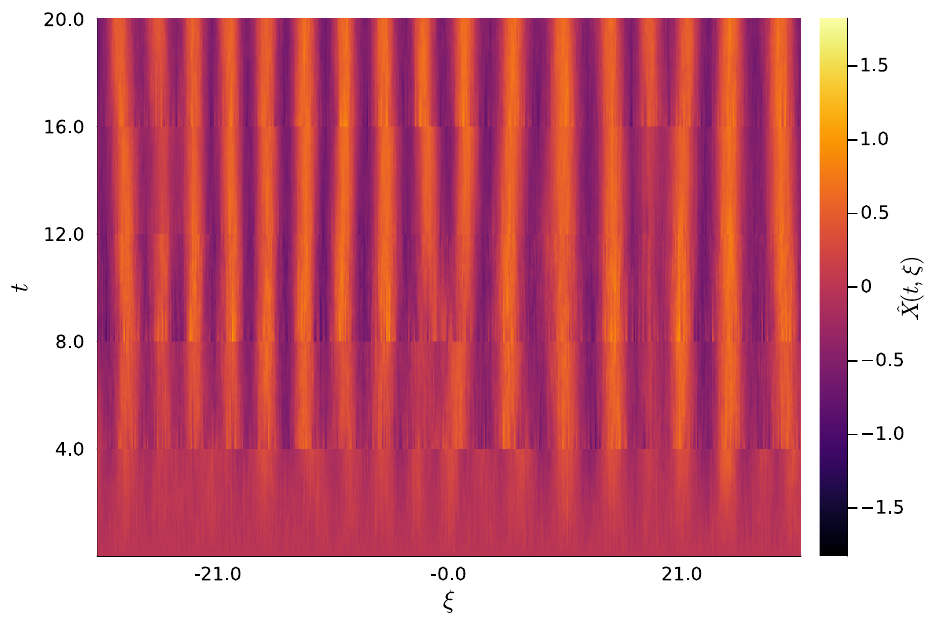}
    }
    \hspace{0.25cm}
    \subfigure[GPF-I, $n = 5$.]{%
        \includegraphics[width=0.45\textwidth]{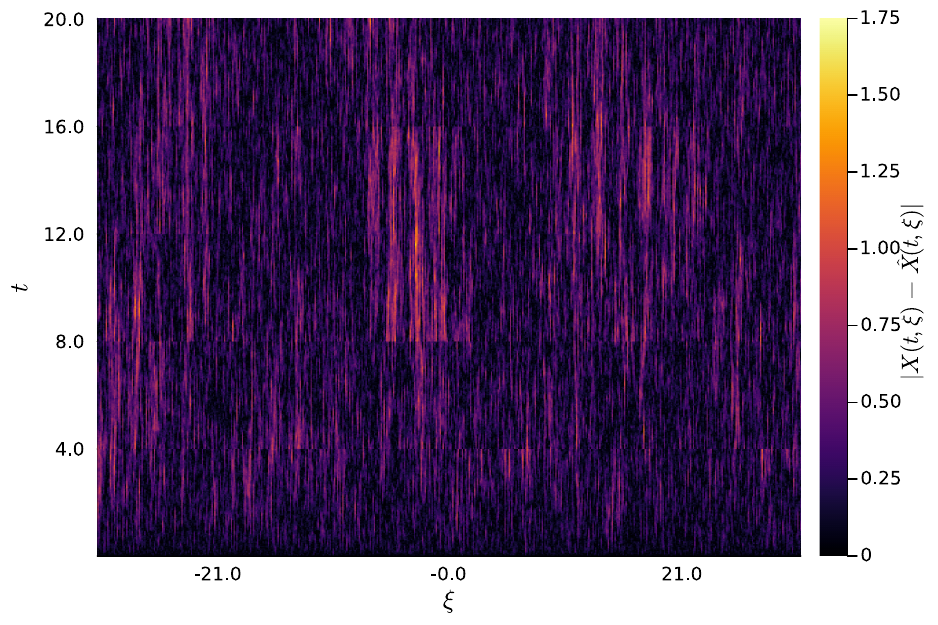}
    }
    \vspace{0.15cm}
        \subfigure[GPF-II, $n = 5$.]{%
        \includegraphics[width=0.45\textwidth]{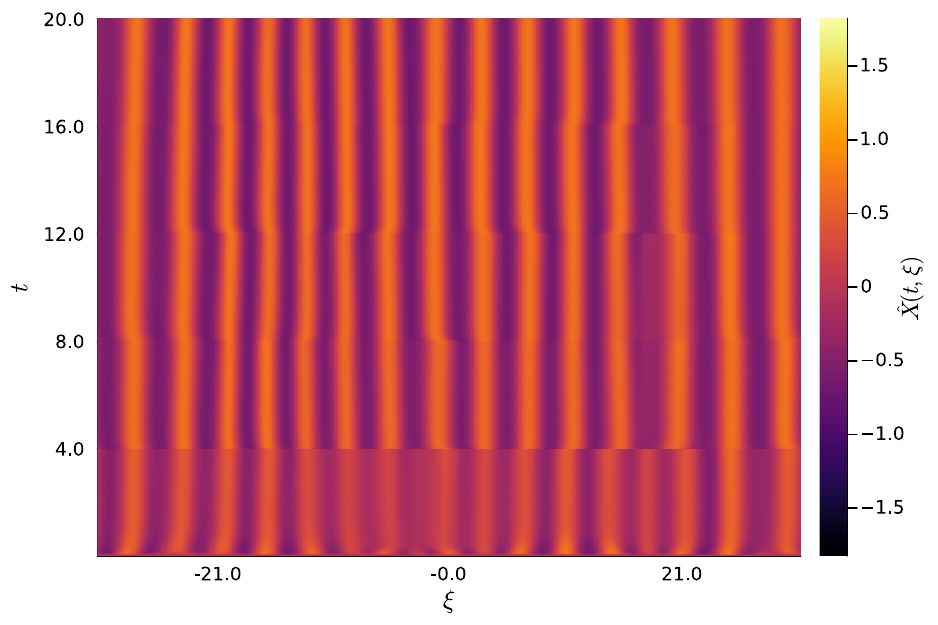}
    }
    \hspace{0.25cm}
    \subfigure[GPF-II, $n = 5$.]{%
        \includegraphics[width=0.45\textwidth]{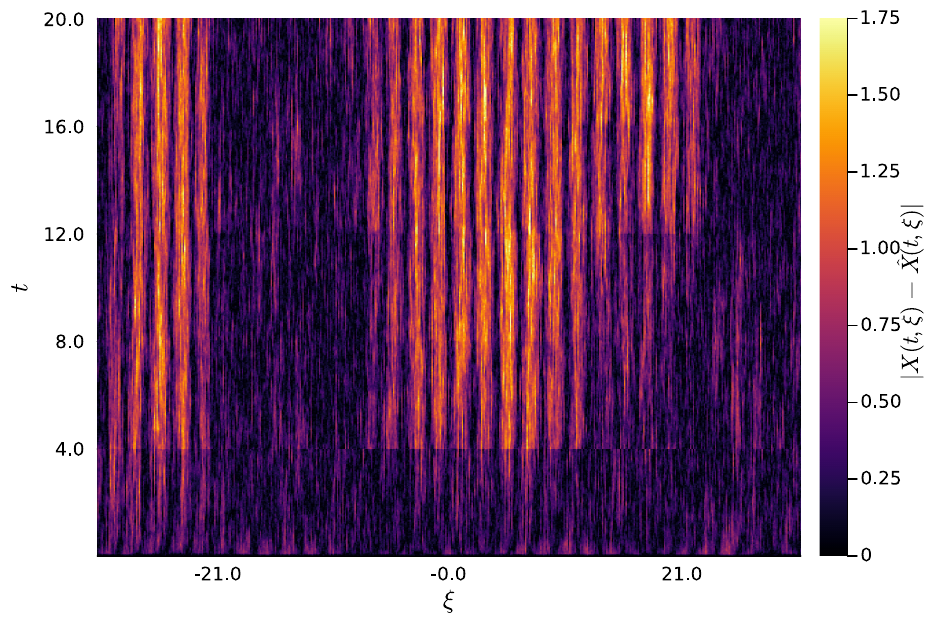}
    }
    \vspace{0.15cm}
	 \caption{Filtering estimates for the two guided particle filter methods based on the dataset in \ref{fig: obs_forward_filtering_A}. Left column: Heatmaps of the filtered path estimate. Right column: Heatmaps of the absolute estimation error. The first and third rows show GPF-I (our proposed method), while the second and fourth rows show GPF-II (method with guiding term in \eqref{eq: Gtld}) under the dense and sparse observation schemes, respectively.}
    \label{fig: filtering_heatmaps_A}
\end{figure}

\paragraph{\bfseries Data}
A total number of $n = 20$ observations are taken at observation times $1.0, 2.0, \ldots, 20.0$.
Each observation $Y_i, i = 1, \ldots, n,$ is defined as in Equation \eqref{eq: def_L} with $\om_j =  \tfrac{1}{|D_j|}\mathbbm{1}_{D_j}$ and measurement domains $D_j$ defined at $m = 15$ equal spatial intervals and length $|D_j| = 1$. As measurement noise we set $\Sigma_i = 0.01 \cdot I$.

The data generating processes $X$ and resulting observations are shown in Figure \ref{fig: obs_forward_filtering_A} and \ref{fig: obs_forward_filtering_B} for the two choices of $\delta$ respectively. 

Additionally, for each experiment, we investigate performance of the proposed methods when downsampling the data from $n =20$ to $n=10$ and $n=5$ observations. 

\vspace{\baselineskip}

\begin{figure}[htbp]
    \centering
    \subfigure[All methods, $n=20$.]{%
        \includegraphics[width=0.45\textwidth]{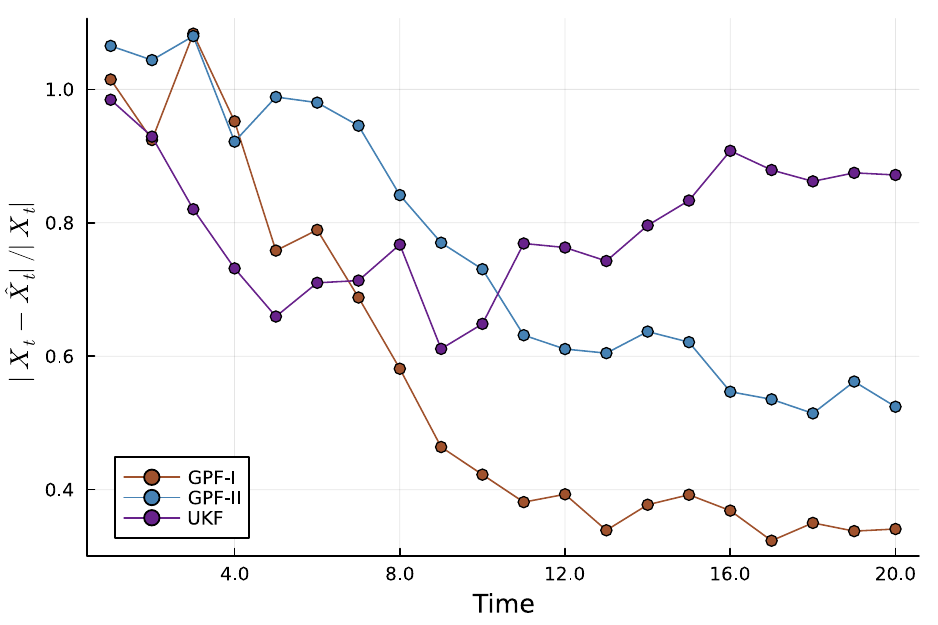}
    }
    \hspace{0.25cm}
    \subfigure[GPF-I, $n = 20$, $10$, $5$.]{%
        \includegraphics[width=0.45\textwidth]{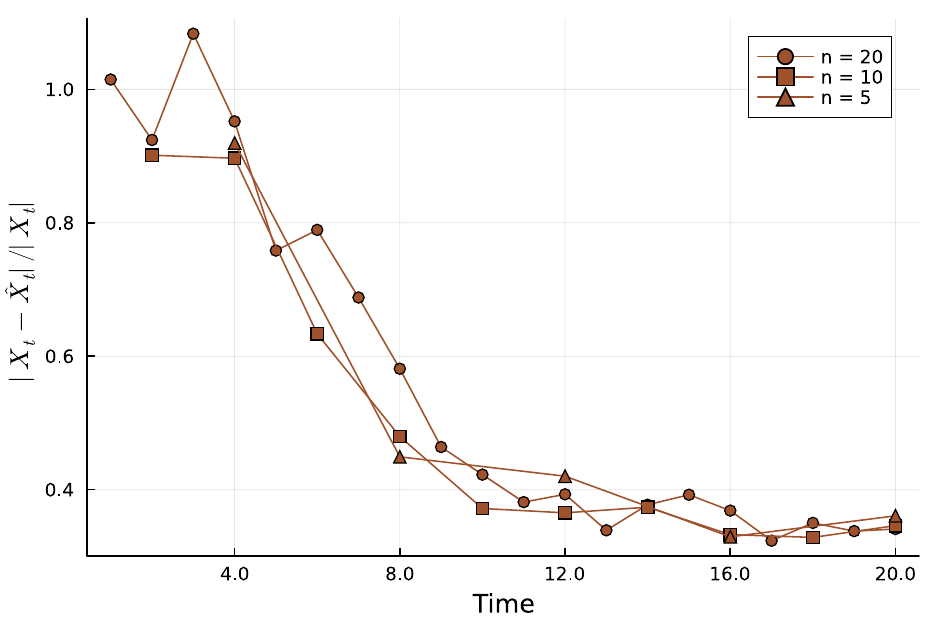}
    }
    \vspace{0.25cm}
    
    \subfigure[GPF-II, $n = 20$, $10$, $5$.]{%
        \includegraphics[width=0.45\textwidth]{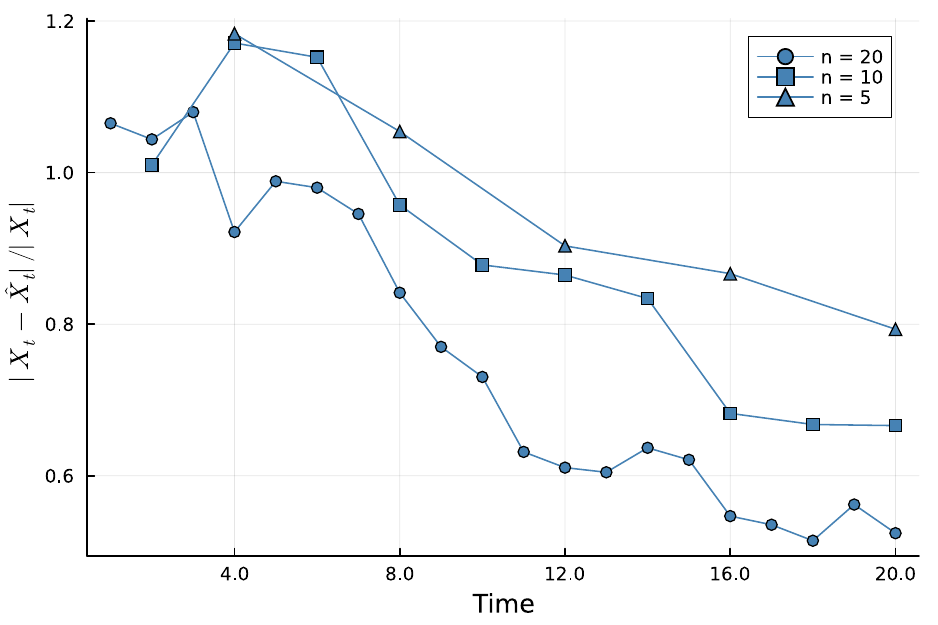}
    }
    \hspace{0.25cm}
    \subfigure[UKF, $n = 20$, $10$, $5$.]{%
        \includegraphics[width=0.45\textwidth]{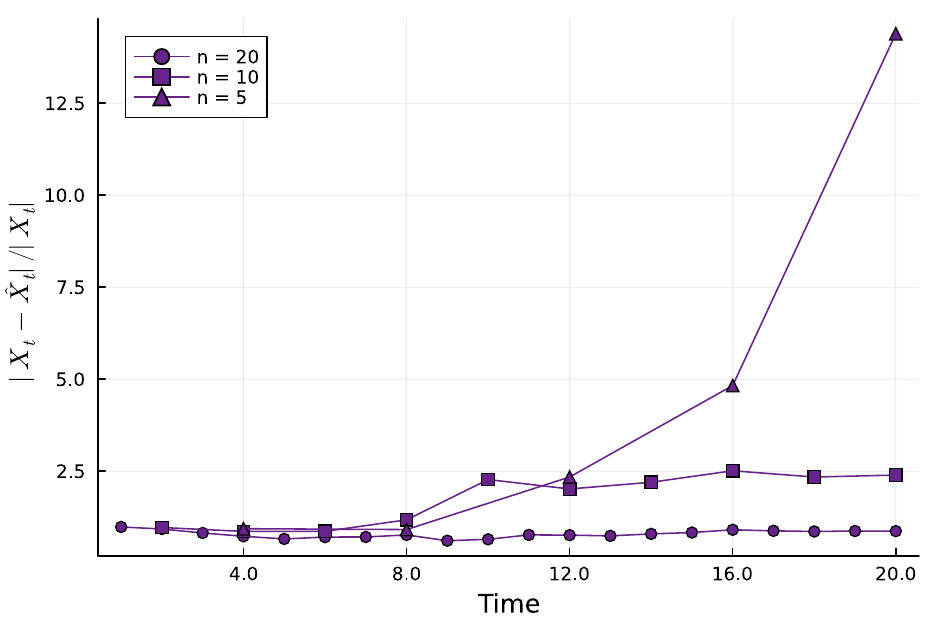}
    }

    \caption{Relative errors $|\hat{X}_{\ti} - X_{\ti}|/|X_{\ti}|$ at observation times $\ti$ for the filtering estimates $\hat{X}_{\ti}$ based on the data set of Figure \ref{fig: obs_forward_filtering_B} and the three methods GPF-I (our proposed method), GPF-II (method with guiding term in \eqref{eq: Gtld}) and UKF. Subfigure (a): a comparison of estimation errors for all three methods in the dense observation case.
    Subfigures (b) - (d): estimation errors of individual methods across subsampled datasets.}
    \label{fig: filtering_errors_B}
\end{figure}

\begin{figure}[htbp]
    \centering
    \subfigure[GPF-I, $n=20$.]{%
        \includegraphics[width=0.45\textwidth]{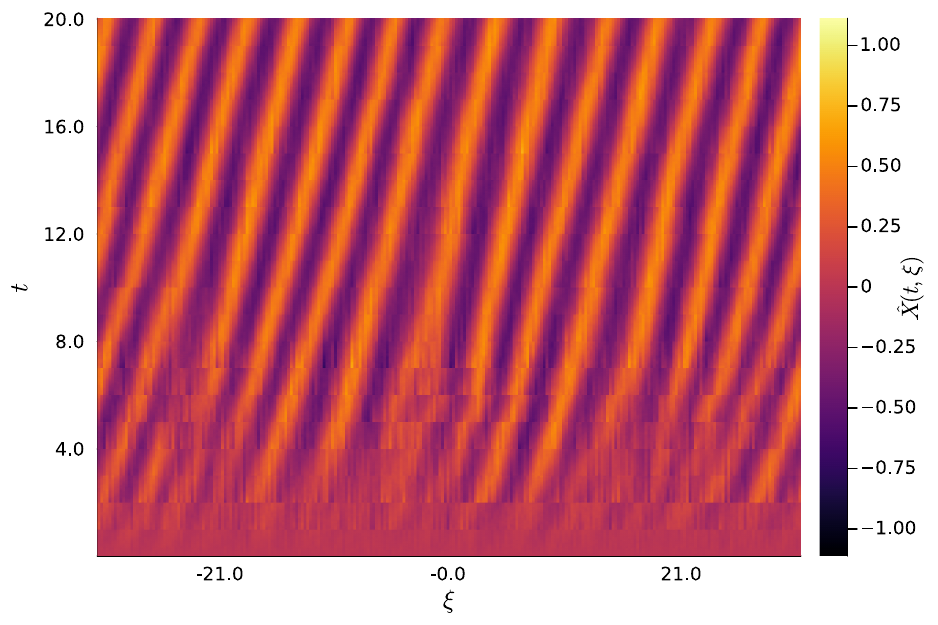}
    }
    \hspace{0.25cm}
    \subfigure[GPF-I, $n=20$.]{%
        \includegraphics[width=0.45\textwidth]{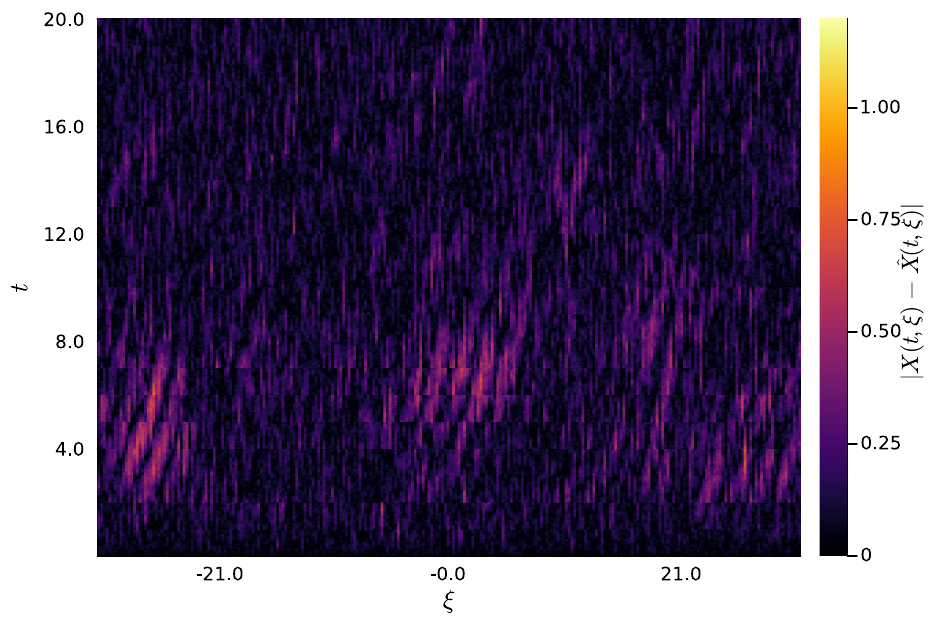}
    }
    \vspace{0.15cm}
        \subfigure[GPF-II, $n = 20$.]{%
        \includegraphics[width=0.45\textwidth]{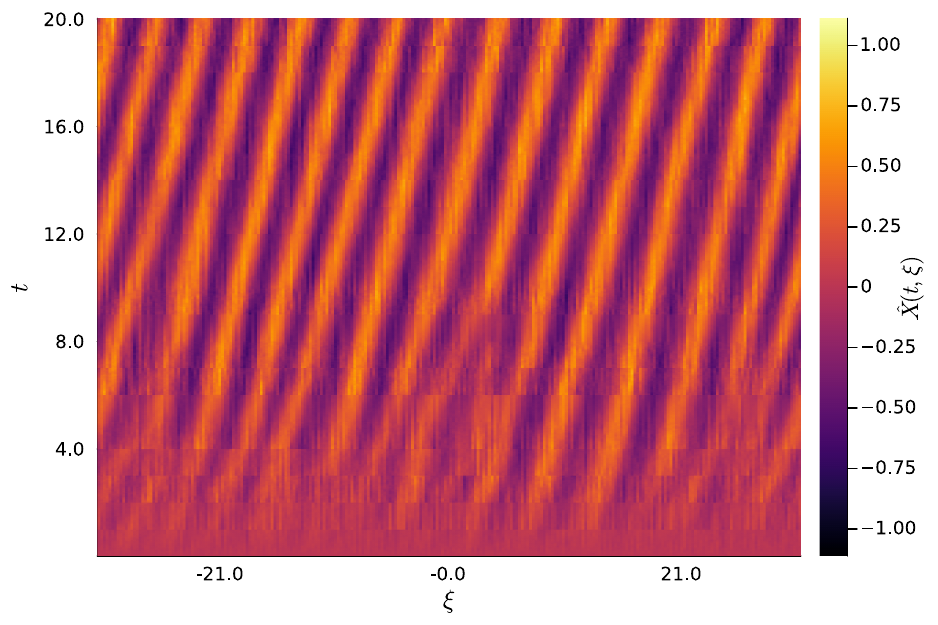}
    }
    \hspace{0.25cm}
    \subfigure[GPF-II, $n = 20$.]{%
        \includegraphics[width=0.45\textwidth]{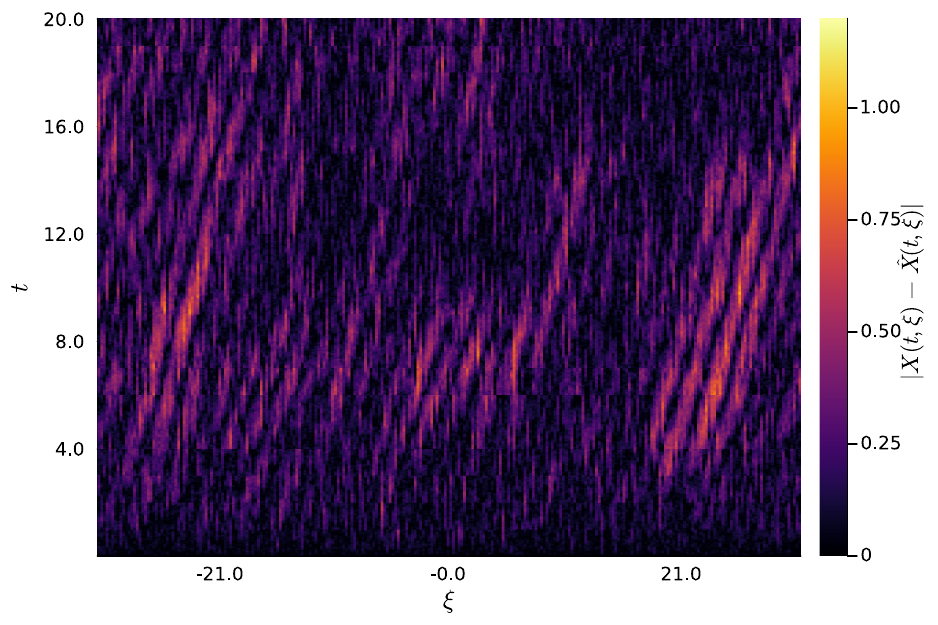}
    }
    \vspace{0.15cm}
        \subfigure[GPF-I, $n = 5$.]{%
        \includegraphics[width=0.45\textwidth]{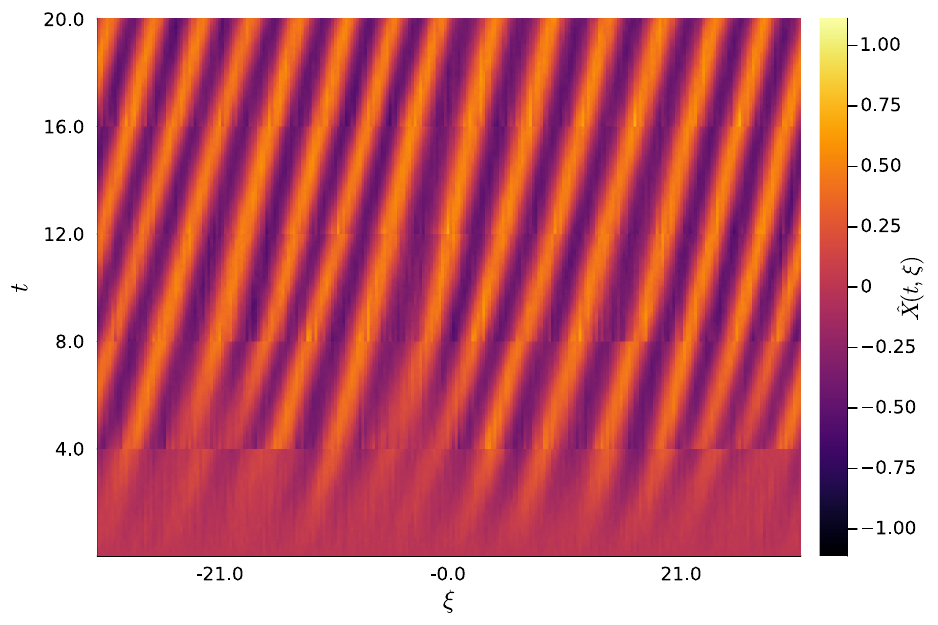}
    }
    \hspace{0.25cm}
    \subfigure[GPF-I, $n = 5$.]{%
        \includegraphics[width=0.45\textwidth]{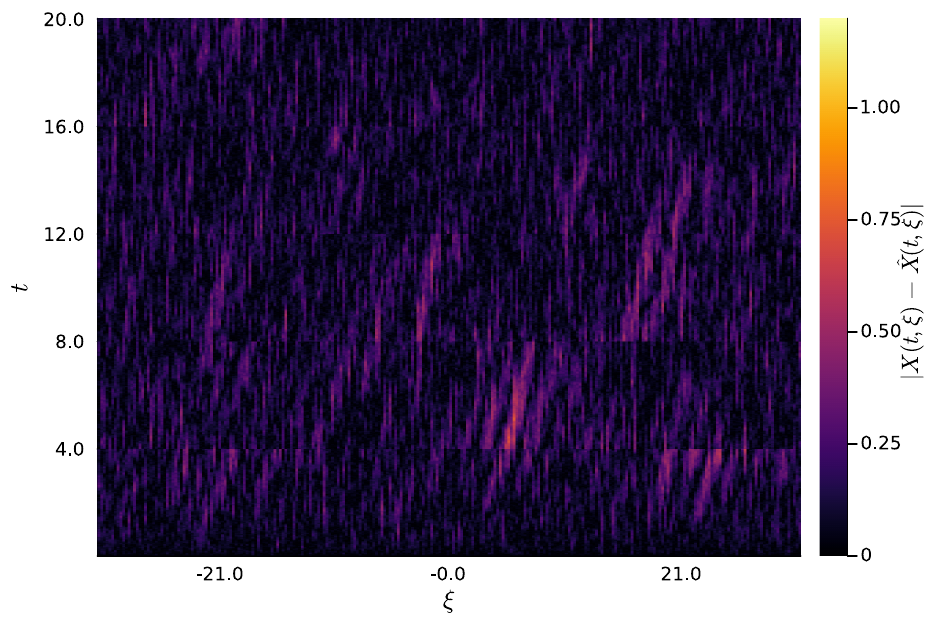}
    }
    \vspace{0.15cm}
        \subfigure[GPF-II, $n = 5$.]{%
        \includegraphics[width=0.45\textwidth]{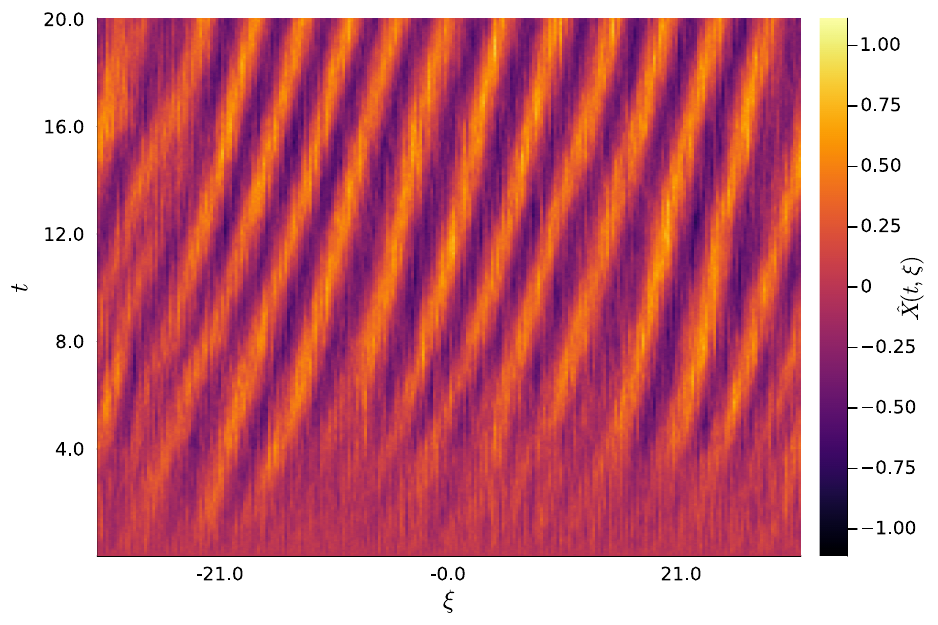}
    }
    \hspace{0.25cm}
    \subfigure[GPF-II, $n = 5$.]{%
        \includegraphics[width=0.45\textwidth]{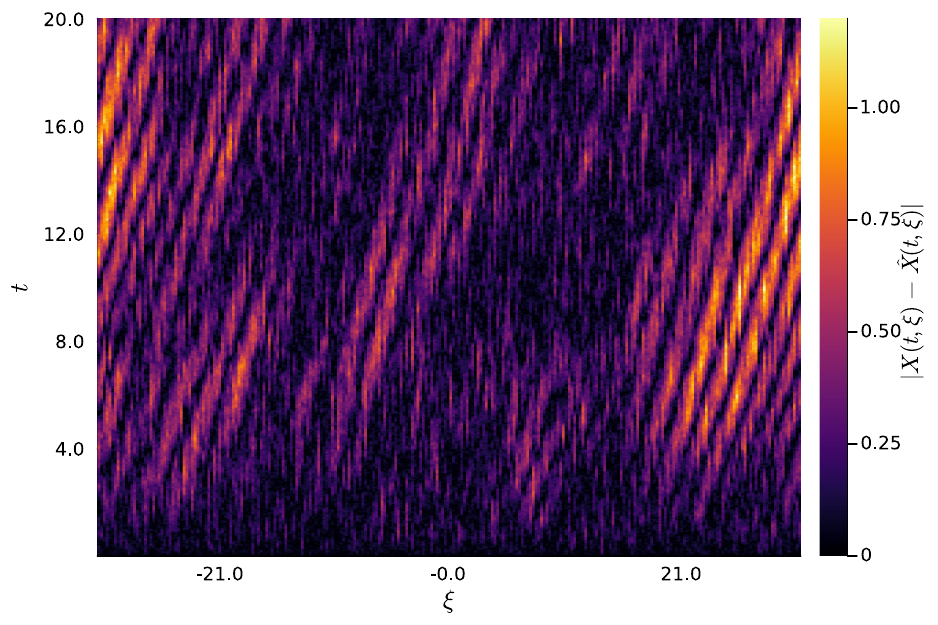}
    }
    \vspace{0.15cm}
	 \caption{Filtering estimates for the two guided particle filter methods based on the dataset in \ref{fig: obs_forward_filtering_A}. Left column: Heatmaps of the filtered path estimate. Right column: Heatmaps of the absolute estimation error. The first and third rows show GPF-I (our proposed method), while the second and fourth rows show GPF-II (method with guiding term in \eqref{eq: Gtld}) under the dense and sparse observation schemes, respectively.}
    \label{fig: filtering_heatmaps_B}
\end{figure}

\paragraph{\bfseries Methods}
For each of the two experiments and the three (downsampled) observation schemes, we run the proposed guided particle filter in Algorithm \ref{alg: guided_particle_filter}, including the tempering and moving steps of Algorithm \ref{alg: tempering_steps} with $J = 100$ particles, $N = 30$ MCMC move steps with pCN step size $\beta = 0.1$ and ESS threshold parameter $\alpha = 0.75$. 

Additionally, we implement the following two established methods as a baseline for comparison: 
\begin{itemize}
\item[1.] The guided particle filter as proposed in \cite{Llopis2018Particle}. This method differs to ours in the proposal distribution $\Q$ of Eq. \eqref{eq: guided_filtering_proposal}.
The proposals chosen in \cite{Llopis2018Particle} are defined by solving the guided process \eqref{eq: dXg2} with guiding term 
\begin{align}
\label{eq: Gtld}
	\tilde{G}_i(t,x) = L^* \left( \Sigma + (t_i - t) L Q L^* \right)^{-1} (y - L x), \quad t \in [\tim, \ti].
\end{align}
In the context of our work, this guiding term can be derived by taking $A = 0$ in the auxiliary process $Z$. To differentiate between the two guided particle filters in the experiments we denote by GPF-I our version and GPF-II the one based on the guiding term $\tilde{G}$. For each experiment, both algorithms are implemented with the same hyperparameter choices of $J, N, \beta$ and $\alpha$.
\item[2.] The unscented Kalman filter (UKF) introduced in \cite{Wan2000Unscented}. The UKF has been proposed for the filtering problem of an Amari model in \cite{Schiff2007Kalman}, where it has been shown to perform well in the setting of a discretisation of Equation \eqref{eq: dX_Amari}.
\end{itemize}

\vspace{\baselineskip}

\paragraph{\bfseries Results Experiment 1} 
Figure \ref{fig: filtering_errors_A} shows relative errors $|\hat{X}_{\ti} - X_{\ti}|/|X_{\ti}|$ of the filtering estimates $\hat{X}_{\ti}$ returned by the three respective methods at observation times $t_1, \ldots t_n$. In Figure \ref{fig: filtering_errors_A}(a), all three methods are compared in the dense observation scheme with $n = 20$ observations. 
In this setting, the UKF  outperforms both particle filtering methods. We hypothesise that this is due to two reasons. Firstly, the UKF relies on the temporal discretisation of the Amari equation between observation points. In this dense observation setting, the discretisation proves to be sufficient for the UKF to perform well on the filtering task. Secondly, in the chosen parametrisation with $\delta = 0$, the Amari equation converges to an equilibrium state at which the dynamics behave roughly as 
	\begin{align}
	\label{eq: a9p1723}
		\df X_t \approx \sqrt{Q} dW_t.
	\end{align} 
	Hence, once equilibrium is reached, the nonlinear part of the dynamical system vanishes, rendering a temporal discretisation more forgiving. 
	As can be observed in Figure \ref{fig: filtering_errors_A}(d), with increasing time lag between observations, the performance of the UKF worsens drastically to the point of diverging in the case of $n = 5$.  
\vspace{\baselineskip}

A comparison of the two particle filter methods in Figure \ref{fig: filtering_errors_A}(a) shows that GPF-II with proposals based on the guiding function in \eqref{eq: Gtld} slightly outperforms our proposed GPF-I in the dense observation setting. This may be explained yet again by the models convergence to an equilibrium state. Since the guiding function defined in \eqref{eq: Gtld} is derived based on the choice $\df Z_t = \sqrt{Q} \df W_t$ for the auxiliary process $Z$, once equilibrium is reached and $X$ behaves following \eqref{eq: a9p1723}, such proposals are almost locally optimal. Hence, GPF-II starts outperforming GPF-I around the observation times just before $t = 8.0$ when an equilibrium state is `fully' reached the first time. On the other hand, when observations are sparse, the quality of the proposals for the GPF-I at states away from equilibrium lead to a better performance in the cases of $n = 10$ and $n = 5$. Interestingly, as can be seen in Figure \ref{fig: filtering_errors_A}(b), the performance of GPF-I in this experiment increases as observations become more sparse. This coincides with findings in \cite{Llopis2018Particle} for GPF-II in the context of a different dynamical system. In our experiment, this increase in performance is drastic enough for GPF-I in the $n=5$ case to be the best performing method over all observation schemes.

Figure \ref{fig: filtering_heatmaps_A} shows heatmaps of the filtered path reconstructions returned by the guided particle filters as well as the corresponding error heatmaps. Note that we are only interested in the filtering distribution and hence do not resample the whole path history at each filtering step. This results in the discontinuity of the paths at observation times seen in Figure \ref{fig: filtering_heatmaps_A}.

It can clearly be seen that the filtering task at hand reduces to estimating the right equilibrium state early on. Once such a steady state is estimated, the particle filters struggle to correct their estimates. Only GPF-I with $n=5$ seems to have sufficient time for the guided proposals to evolve into the correct steady state.

\paragraph{\bfseries Results Experiment 2} 
For the second filtering experiment, the comparison between relative filtering errors of the three methods can be found in Figure \ref{fig: filtering_errors_B}. In this parametrisation with $\delta = 0.5$, the system does not converge to an equilibrium; instead, it exhibits traveling wave behaviour. Expectedly, the proposed GPF-I outperforms GPF-II over all three observation schemes. We also see the UKF performing worse than the particle filter methods, even in the dense observation scheme. Moreover, the large time steps between observations plus the nonlinear dynamics of the traveling waves states result in significantly more divergent behaviour in the case of $n=5$ observations. On the other hand, GPF-II performs only slightly worse as observation numbers decrease, whereas GPF-I remains robust to the sparsity of observations. 

This is confirmed by the heatmaps of the reconstructed paths in \ref{fig: filtering_heatmaps_B}. In the dense observation scheme, performance of the filtering methods is similar. However, for $n=5$, only the better informed proposals of GPF-I estimate the correct traveling waves at the first observation time $t_1 = 4$. Still, in contrast to the first experiment, both particle filters manage to overcome poor estimates instead of getting stuck in misestimated equilibria. 

\subsection{Smoothing and parameter estimation}

\paragraph{\bfseries Model setup and data}
The smoothing and parameter estimation experiments are run on the same data generating process and observation scheme as depicted in Figure \ref{fig: obs_forward_filtering_B}.
We assume the parameters $x_0, B, \sigma_0, \rho_0, \eta_0$ as defined in \eqref{eq: params} to be known.
On the other hand, the parameters $\eta, \zeta, A$ and $\delta$ are treated as unknown and therefore to be estimated. 

\paragraph{\bfseries Method}
We apply Algorithm \ref{alg: Gibbs_sampler2} to the given dataset with $N = 15 \, 000$ MCMC iterations and pCN step size $\beta = 0.1$.
As priors we choose 
\begin{align*}
	\eta \sim \Unif([0,15]), \, \zeta \sim \Unif([0,3]),\,  A \sim \Unif([0,8]), \, \delta \sim \Unif([0,1]).
\end{align*}

The parameters $\{\eta, \zeta, A, \delta \}$ are updated individually in a Gibbs sampling manner. Proposals for each parameter are chosen via an adaptive scaling Metropolis scheme (see e.g. \cite{Andrieu2008Tutorial}) as follows.
At iteration $j$, given a step size $S^{\eta}_j$, say we propose $\eta_{j+1}$ via 
\begin{align*}
	\eta_{j+1} = \eta_j + \exp(S^{\eta}_j) V
\end{align*}
with $V \sim \calN(0,1)$. Setting $\theta_{j+1} = [\eta_{j+1}, \zeta_{j}, A_{j}, \delta_{j}]$, we accept/reject $\theta_{j+1}$ with Metropolis acceptance probability 
\begin{align*}
	M_{j+1}= \min\left(1, \dfrac{\pi(\theta_{j+1})  g_{\theta_{j+1}}(0,x_0)  \Psi_{\theta_{j+1}}(\Gamma_{\theta_{j+1}}(x_0,V))}{\pi(\theta_j)   g_{\theta_j}(0,x_0) \Psi_{\theta_j}(\Gamma_{\theta_j}(x_0,V))} \right).
\end{align*}
Subsequently, the step size $S^{\eta}_j$ is updated via
\begin{align*}
	S^{\eta}_{j+1} = S^{\eta}_j + r_j (M_{j+1} - \alpha^*)
\end{align*}
where $r_j$ is a scaling function and $\alpha^*$ a target acceptance rate. 
The remaining parameters $\zeta_j, A_j, \delta_j$ are updated equivalently in the standard Gibbs sampling fashion, each with their own adaptive step sizes $S^{\zeta}_j, S^A_j, S^{\eta}_j$.
In our implementation, we set $r_j = j^{-2/3}$ and $\alpha^* = 0.234$. The step sizes are initiated at $S_0 = 1$ for all parameters.
To stress-test Algorithm \ref{alg: Gibbs_sampler2}, we run the experiment twice, initialising each parameter at either end of its prior interval. 

\paragraph{\bfseries Results}
Figure \ref{fig: smoothing_trace_plots} shows the trace-plots of the MCMC outputs returned by Algorithm \ref{alg: Gibbs_sampler2} for the two different initialisations. Given the initialisations on either end of the prior intervals, the chains appear convergent remarkably fast. The true parameters fall within the range of one standard deviation of the posterior mean for each individual parameter. The exact posterior mean and standard deviation, computed after a burn-in period of $5000$ iterations, are given in Table \ref{tab: param_estimation}. 

An estimate of the smoothed path, given by the mean sample path of the final $1000$ MCMC iterations, is shown in Figure \ref{fig: smoothing_heatmaps}.  It is noteworthy that the quality of the path reconstruction matches that of the filtering experiments, even in this setting, in which half of the model parameters are assumed to be unknown. Errors in the reconstruction also appear to be mostly due to local noise and the slight misestimation of parameters, rather than a mismatch in the structural patterns of the Amari equation.
Additionally, we plot a spatially localised path of every $100$th of the first $5000$ sample paths of the MCMC output. Each path is localised in space by applying the observation operator $L$ and taking the first and eighth component of the resulting vector. These correspond to the first and eighth measurement domains with centres at $\xi_1 \approx - 28.9$ and $\xi_8 = 0.0$ plotted on the right-hand side of Figure \ref{fig: obs_forward_filtering_B}. The localised paths as well as the true localised signal and measurements thereof are depicted in Figure \ref{fig: smoothing_localised}.
 The figures show a clear estimation quality of the posterior samples, with deviations to the true signal being stronger in areas where measurements deviate as well.

\begin{table}[]\begin{tabular}{l|l|l|l|l|l|}
\cline{2-6}
                               & True Value & \begin{tabular}[c]{@{}l@{}}Exp. 1 - \\ Mean (Std)\end{tabular} & \begin{tabular}[c]{@{}l@{}}Exp. 1 - \\ Acceptance Ratio\end{tabular} & \begin{tabular}[c]{@{}l@{}}Exp. 2 - \\ Mean (Std)\end{tabular} & \begin{tabular}[c]{@{}l@{}}Exp. 2 -\\ Acceptance Ratio\end{tabular} \\ \hline
\multicolumn{1}{|l|}{$\eta$}   & \textbf{10.0}      & \textbf{10.95} (1.72)                                                   & 0.239                                                                & \textbf{11.26} (2.2)                                                    & 0.237                                                               \\ \hline
\multicolumn{1}{|l|}{$\zeta$}  & \textbf{0.5}        & \textbf{0.31} (0.23)                                                    & 0.236                                                                & \textbf{0.33} (0.23)                                                    & 0.234                                                               \\ \hline
\multicolumn{1}{|l|}{$A$}      & \textbf{4.0}        & \textbf{3.89} (0.17)                                                    & 0.223                                                                & \textbf{3.89} (0.24)                                                    & 0.226                                                               \\ \hline
\multicolumn{1}{|l|}{$\delta$} & \textbf{0.5}        & \textbf{0.5} (0.006)                                                    & 0.211                                                                & \textbf{0.5} (0.007)                                                    & 0.208                                                               \\ \hline
\end{tabular}
\caption{Posterior means and standard deviations as well as acceptance ratios returned for the model parameters by Algorithm \ref{alg: Gibbs_sampler2}.}
\label{tab: param_estimation}
\end{table}

\begin{figure}[htbp]
    \centering
    \subfigure{%
        \includegraphics[width=0.45\textwidth]{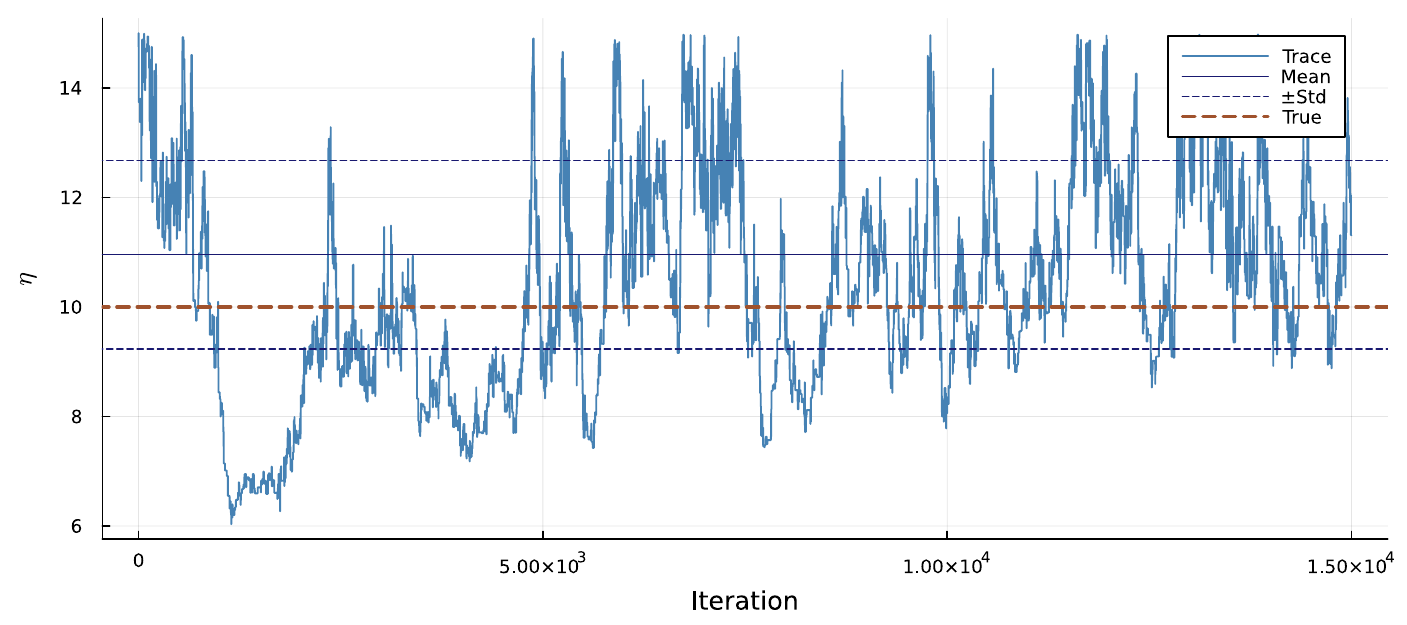}
    }
    \hspace{0.25cm}
    \subfigure{%
        \includegraphics[width=0.45\textwidth]{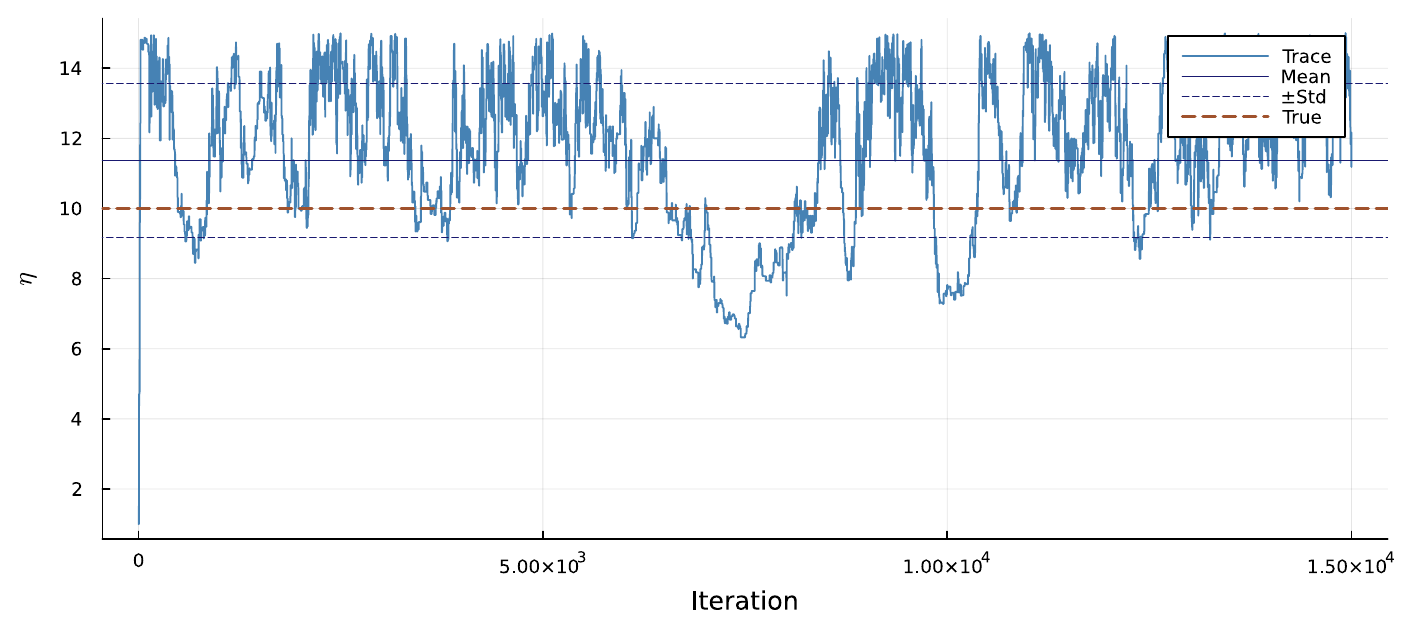}
    }
    \vspace{0.25cm}
        \subfigure{%
        \includegraphics[width=0.45\textwidth]{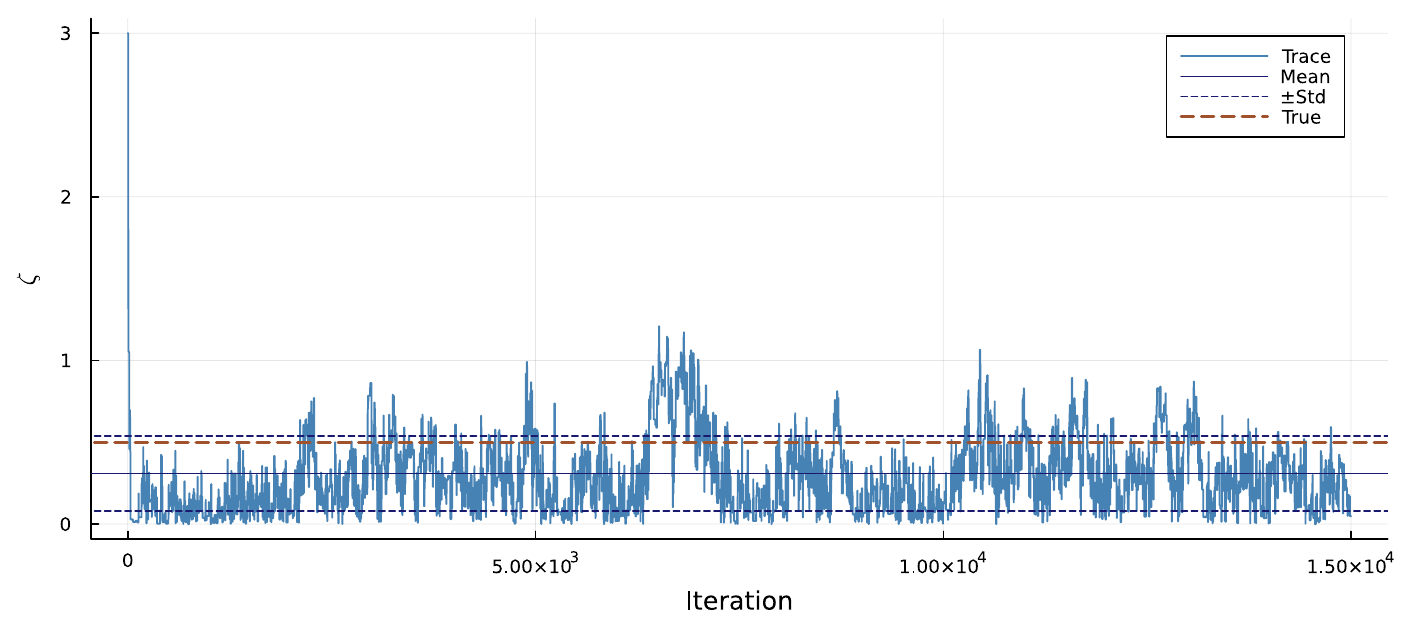}
    }
    \hspace{0.25cm}
    \subfigure{%
        \includegraphics[width=0.45\textwidth]{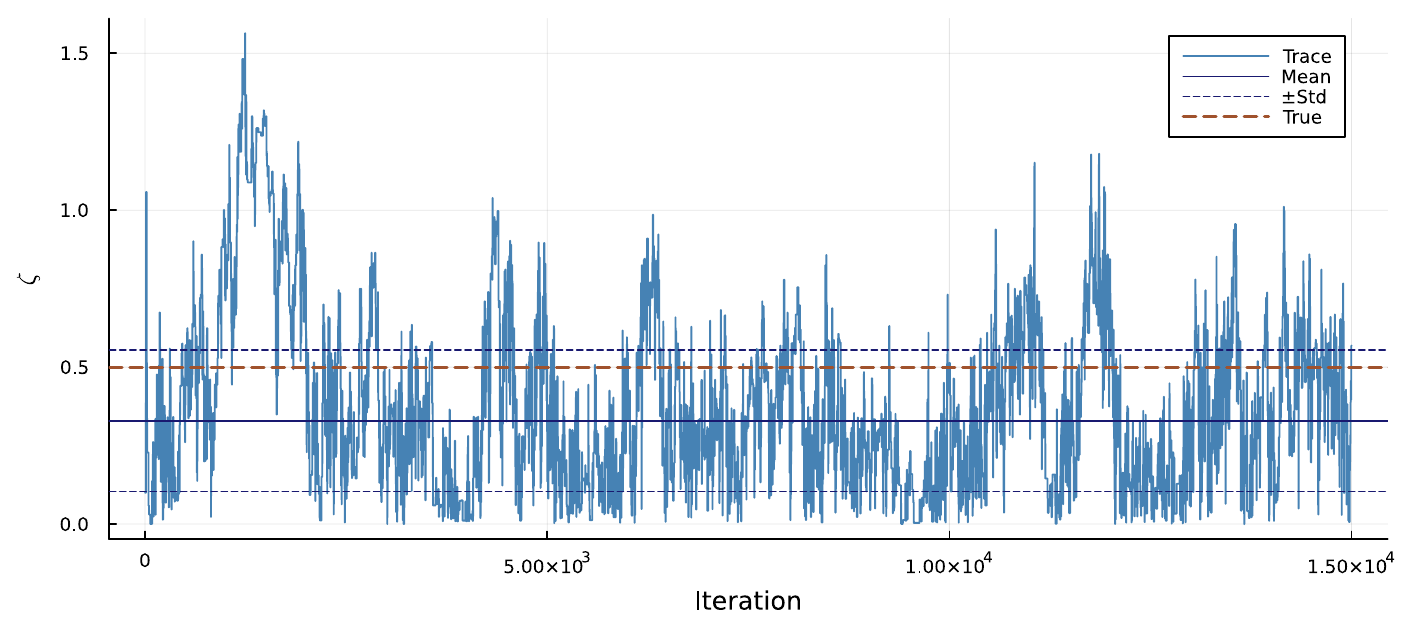}
    }
    \vspace{0.25cm}
        \subfigure{%
        \includegraphics[width=0.45\textwidth]{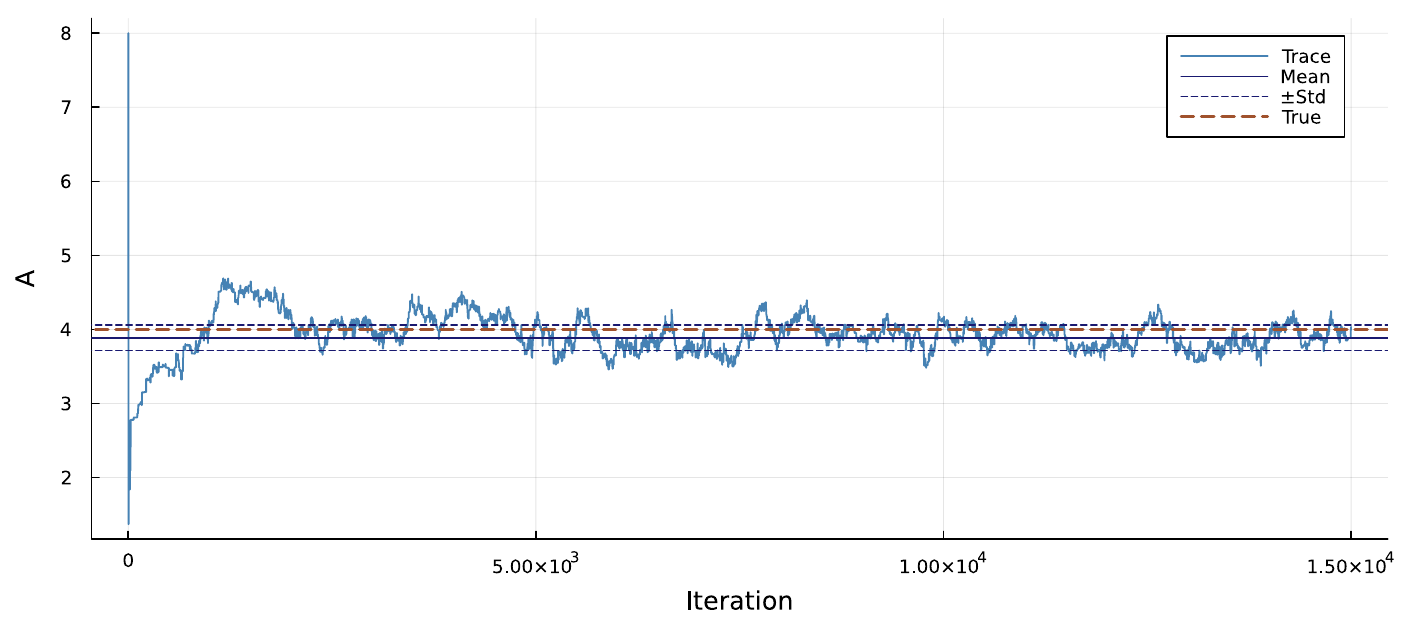}
    }
    \hspace{0.25cm}
    \subfigure{%
        \includegraphics[width=0.45\textwidth]{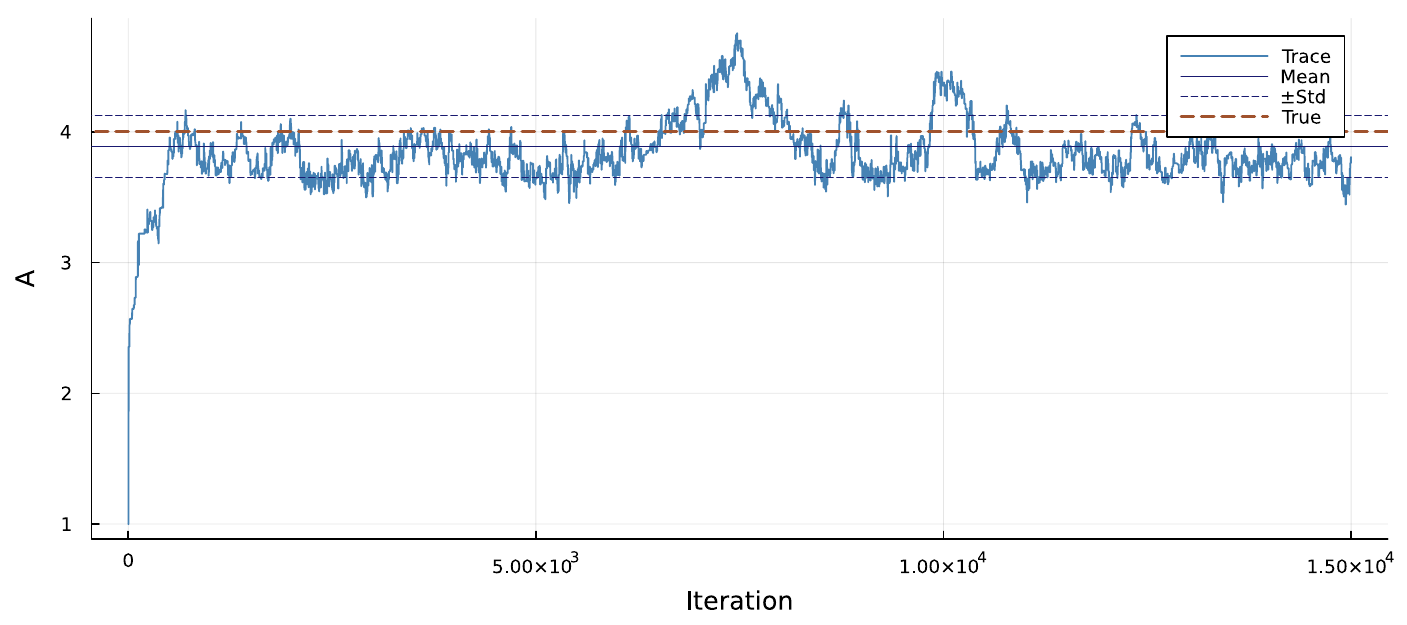}
    }
    \vspace{0.25cm}
        \subfigure{%
        \includegraphics[width=0.45\textwidth]{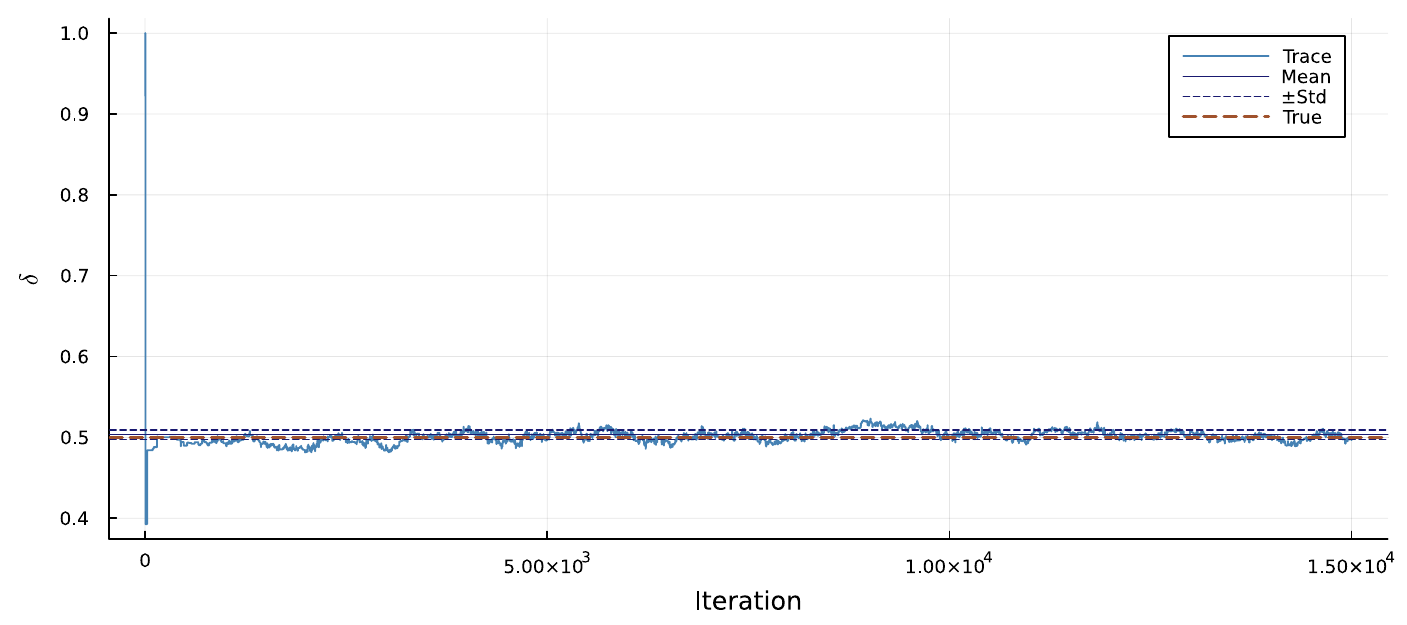}
    }
    \hspace{0.25cm}
    \subfigure{%
        \includegraphics[width=0.45\textwidth]{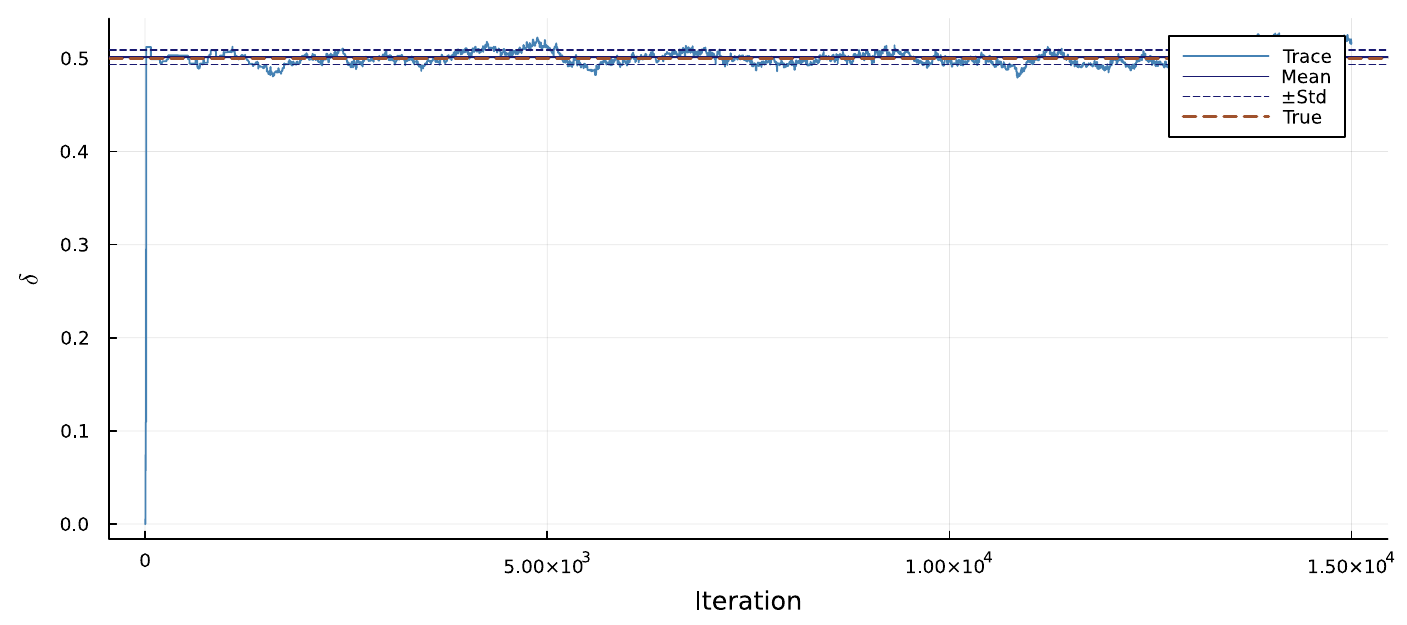}
    }
    \vspace{0.25cm}
	 \caption{Trace-plots of the MCMC outputs returned by Algorithm \ref{alg: Gibbs_sampler2} for various model parameters. The rows show, in order, the trace-plots of $\eta, \zeta, A$ and $\delta$. The dark blue solid line represents the sample mean after a burn-in period of $5000$ samples, whereas the dashed lines represent the interval spanning one empirical standard deviation above and below the sample mean. The orange lines represent the true value. Different columns relate to a different initialisation of the algorithm. }
    \label{fig: smoothing_trace_plots}
\end{figure}

\begin{figure}[htbp]
    \centering
    \subfigure{%
        \includegraphics[width=0.45\textwidth]{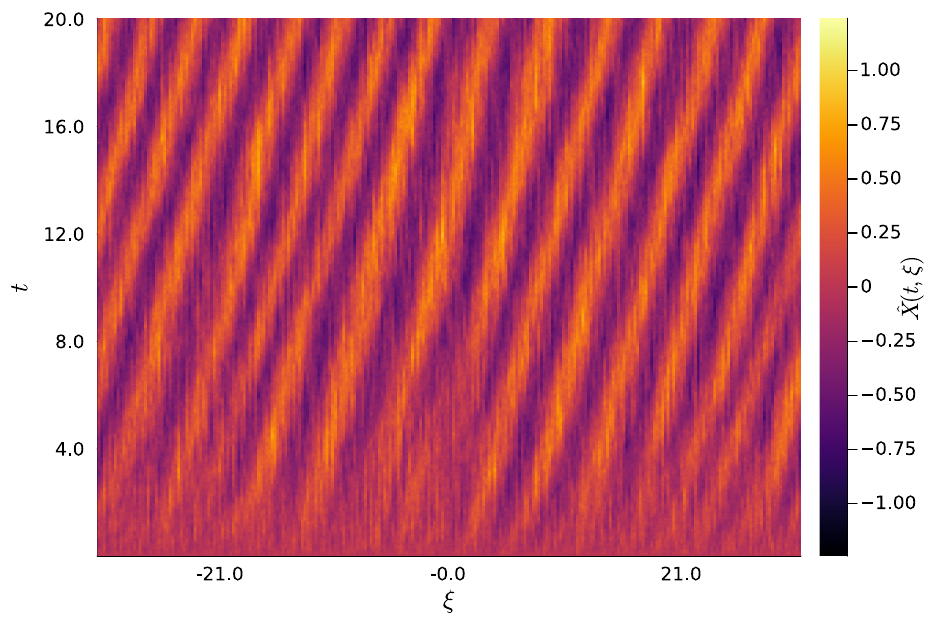}
    }
    \hspace{0.25cm}
    \subfigure{%
        \includegraphics[width=0.45\textwidth]{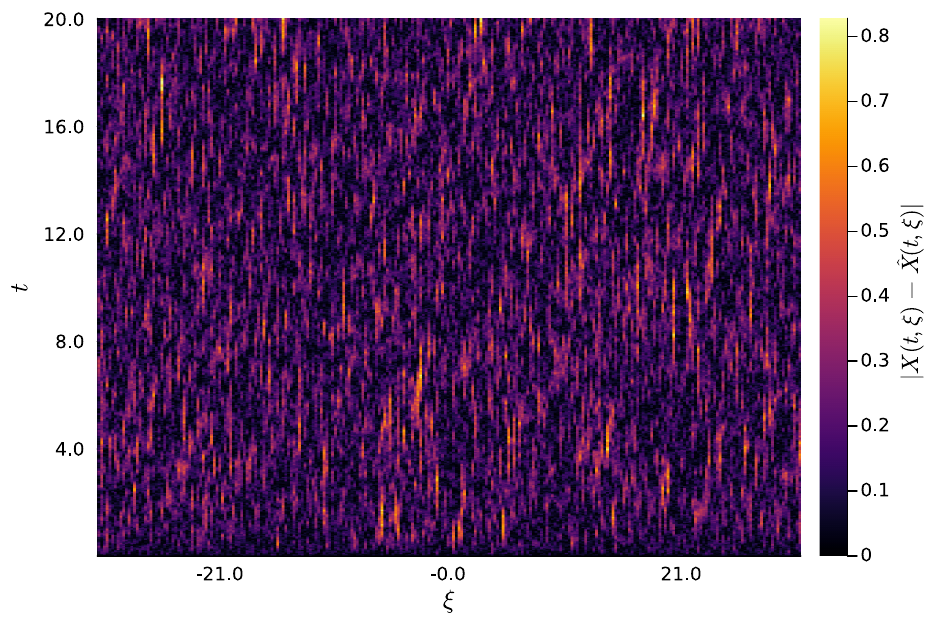}
    }
    \caption{Heatmap of the smoothing estimate of $X$. Left: Path estimate $\hat{X}$ given by the mean of the final $1000$ MCMC iterations returned by Algorithm \ref{alg: Gibbs_sampler2}. Right: Absolute error with respect to the true signal $X$.}
    \label{fig: smoothing_heatmaps}
\end{figure}

\begin{figure}[b]
    \centering
    \includegraphics[width=0.9\textwidth]{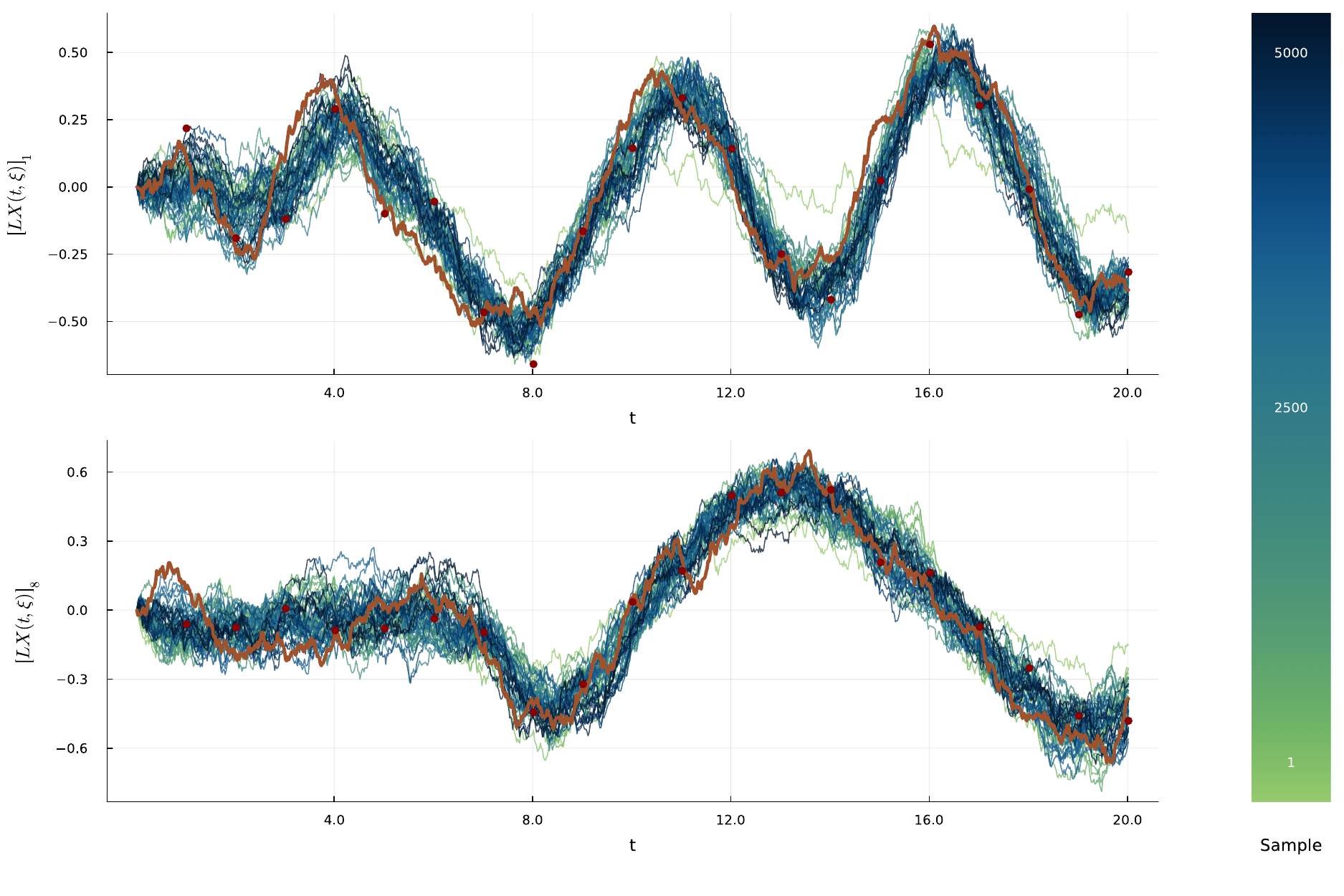}
    \caption{Spatially localised sample paths of the smoothing distribution of $X$. Spatial localisations are computed by applying the observation operator $L$. Green represents early samples in the Markov chain, whereas blue represents later samples. The orange line shows the true localised signal, with red dots symbolising the measurements thereof. Top: Local average over the first measurement domain around $\xi_1 \approx -28.9$. Bottom: Local average over the eighth measurement domain around $\xi_8 = 0.0$.}
    \label{fig: smoothing_localised}
\end{figure}

\bigskip
\noindent\textbf{Code availability:} \\
The source code for the experiments in Section \ref{sec: amari} is available at

\href{https://github.com/tpiepersethmacher/NeuralFieldsFiltering.jl}{\texttt{https://github.com/tpiepersethmacher/NeuralFieldsFiltering.jl}}.

\newpage 

\appendix  

\section{}
\label{app: A}
\subsection{Proofs of Section \ref{sec: smoothing_measure}}
\begin{proof}[Proof of Proposition \ref{prop: h_properties}]
    Both equalities follow from rewriting the definition of $h$ in \eqref{eq: def_h}, whereas the continuity claims are a consequence of the dominated convergence theorem and almost sure continuity of $X$.
\end{proof}

\begin{proof}[Proof of Proposition \ref{prop: Eht_martingale}]
We first show that the function $h$ defined in \eqref{eq: def_h} is space-time harmonic on any time interval $(\tim,\ti]$, i.e. 
\begin{align}
\label{eq: hspacetimeharmonic}
    \E[ h(t, X_{t}) \mid X_s = x] &= h(s,x)
\end{align}
for any $x \in H$ and $s,t \in (\tim,\ti]$ such that $ s < t.$

Indeed, from plugging in \eqref{eq: def_h} and using the Chapman-Kolmogorov equation, it follows that 
\begin{align*}
    \E[ h(t, X_{t}) \mid X_s = x] &= \int h(t,x_t) \mu_{s,t}(x, \df x_t) \\
    &= \int k(x_{\ti}, y_i) \left( \prod_{j=i}^{n-1} k(x_{\tjp},y_{j+1}) \, \mu_{\tj,\tjp}(x_{\tj}, \df x_{\tjp}) \right) \mu_{s,t}(x, \df x_t) \mu_{t, \ti}(x_t, \df x_{\ti}) \\
    &= \int k(x_{\ti}, y_i) \left( \prod_{j=i}^{n-1} k(x_{\tjp},y_{j+1}) \, \mu_{\tj,\tjp}(x_{\tj}, \df x_{\tjp}) \right) \mu_{s,\ti}(x,\df x_{\ti}) \\
    &= h(s,x).
\end{align*}
This gives \eqref{eq: hspacetimeharmonic}.
To show that $E^h$ is a $\P$-martingale, first consider the case that $\tim < s < t \leq t_i$.
Then, by the Markov property of $X$, the definition of $E^h$ and the preceding display we have
\begin{align*}
    \E[ E^h_t \mid \calF_s] &= \frac{1}{C^h} \E\left[\left( \prod_{j=1}^{i-1} k(X_{t_j},y_j) \right) h(t,X_t) \mid \calF_s\right] \\
    &= \frac{1}{C^h} \prod_{j=1}^{i-1} k(X_{t_j},y_j)  \E \left[ h(t,X_t) \mid X_s \right] \\
    &= \frac{1}{C^h} \prod_{j=1}^{i-1} k(X_{t_j},y_j) h(s,X_s) = E^h_s.
\end{align*}

On the other hand, if $t_{i-2} < s \leq \tim < t \leq t_i$, then it holds that
\begin{align*}
    \E[ E^h_t \mid \calF_s] &= \frac{1}{C^h} \E\left[\left( \prod_{j=1}^{i-1} k(X_{t_j},y_j) \right) h(t,X_t) \mid \calF_s\right] \\ 
    &= \frac{1}{C^h} \left(\prod_{j=1}^{i-2} k(X_{t_j},y_j) \right) \E\left[  k(X_{\tim},y_{i-1})  h(t,X_t) \mid X_s\right] \\
    &= \frac{1}{C^h} \left(\prod_{j=1}^{i-2} k(X_{t_j},y_j) \right) \int k(x_{\tim},y_{i-1}) h(t,x_t) \,\mu_{s,\tim}(X_s, \df x_{\tim}) \mu_{\tim, t}(x_{\tim},\df x_t) \\
    &= \frac{1}{C^h} \left(\prod_{j=1}^{i-2} k(X_{t_j},y_j) \right) h(s,X_s) = E^h_s.
\end{align*}
Here, we use the Markov property of $X$ as well as the definition of $E^h_t$ in the first step and the $\calF_s$-measurability of $X_{t_j}, j \leq i-2$ in the second step. The last step follows by plugging in the definition of $h(t,x_t)$ and integrating out the latent variable $X_t$. The general case $s < t$ follows by the same arguments.

\end{proof}

\begin{proof}[Proof of Theorem \ref{thm: dPh}]
We begin by showing that $\P^h$ satisfies \eqref{eq: smoothingEh}. For the proof of this first claim, let us write $\P^{h,y}$ and $E^{h,y}$ to emphasise the dependence of the measure $\P^h$ and process $E^h$ on the observation $Y = y$.
To prove \eqref{eq: smoothingEh} it suffices to show that 
\begin{align}
\label{eq: aap918273}
    \E \left[ \E^{h,Y} \left[ \varphi(X_t) \right] \mathbbm{1}_{\{Y \in A\}} \right] &= \E[\varphi(X_t) \mathbbm{1}_{\{Y \in A\}}]
\end{align}
for any $A \in (\calB(\R^{k}))^n$ and bounded, measurable function $\varphi$ and $t \in [0,T].$
Without loss of generality let $t \in (\tim, \ti]$ for some $i \in \{1,\dots,n\}$.
Then indeed we have that
\begin{align*}
    \E \left[ \E^{h,Y} \left[ \varphi(X_t) \right] \mathbbm{1}_{\{Y \in A\}} \right] &= \int_A \E^{h,y} \left[ \varphi(X_t)  \right] p(y) \df y \\
    &= \int_A \E \left[ \varphi(X_t)  \frac{1}{C^{h,y}} \left( \prod_{j=1}^{i-1} k(X_{t_j},y_j) \right) h(t,X_t) \right] p(y) \, \df y \\
    &= \int_A \E \left[ \varphi(X_t)  \left( \prod_{j=1}^{i-1} k(X_{t_j},y_j) \right) h(t,X_t) \right]  \, \df y \\
    &= \E \left[ \varphi(X_t)  \int_A  \left( \prod_{j=1}^{i-1} k(X_{t_j},y_j) \right) h(t,X_t) \, \df y\right] \\
    &= \E \left[ \varphi(X_t)  \E \left[\mathbbm{1}_{\{Y \in A\}} \mid X_{t_1}, \dots, X_{\tim}, X_t \right] \right] \\
    &= \E \left[ \varphi(X_t) \mathbbm{1}_{\{Y \in A\}} \right].
\end{align*}
Here, we used in the second step the definition of $\P^{h,y}$ and the martingale property of $E^{h,y}_t$ and in the third step the equality $C^{h,y} = p(y)$. Moreover, we used Fubini's theorem in the fourth equality as well as the identity of $p(y \mid x_{t_1}, \dots, x_{t}) = \left( \prod_{j=1}^{i-1} k(x_{t_j},y_j) \right) h(t,x_t)$ as derived in Remark \ref{rem: prodkh_likelihood} in the second to last step.

We continue to prove the second claim of Theorem \ref{thm: dPh}. 
To do so, we show that the martingale $E^h_t$ defined in \eqref{eq: def_Eht} is given by 
\begin{align}
\label{eq: a1p092873}
E^{h}_t = \dfrac{h(0,X_0)}{C^h} \calE(M^h)_t, \quad t \in [0,T],
\end{align}
where $\calE(M^h)$ denotes the Doléans-Dade exponential of 
\begin{align*}
	M^h_t := \int_0^t \langle Q^{\frac12} \Df_x \log h(s,X_s), \df W_s \rangle, \quad t \in [0,T].
\end{align*}
The claim then follows by an application of the Girsanov theorem. 

Let first $t \in [0,t_1]$ and denote by $K$ the infinitesimal space-time generator of the Markov process $X$. 
Following \eqref{eq: hspacetimeharmonic}, it holds that $K h = 0$. 
From this and the Fréchet differentiability of $h$ in $x$ it follows with Lemma 3.3 in \cite{Piepersethmacher2025Class} that 
\begin{align*}
    h(t,X_t) = h(0,X_0) + \int_0^t \langle Q^{\frac12} \Df_x h(s,X_s), \df W_s \rangle, \quad t \in [0, t_1).
\end{align*}
Hence the process $\bar{h}(t,X_t) := h(t,X_t)/h(0,X_0)$ is the unique solution to 
\begin{align*}
\begin{cases}
        \df \bar{h}(t,X_t) &= \bar{h}(t,X_t) \df M^{h}_t \\
    \bar{h}(0,X_0) &= 1.
\end{cases}
\end{align*}
From this and the left-continuity of $h$ it follows that
\begin{align*}
    h(t,X_t) = h(0,X_0) \calE(M^{h})_t, \quad t \in [0,t_1].
\end{align*}
This, jointly with $E^h_t = h(t,X_t)/C^h, ~t \in [0,t_1],$ shows \eqref{eq: a1p092873} on $[0,t_1]$.
Now, let $t \in (\tim, \ti], i \geq 2,$ and assume \eqref{eq: a1p092873} holds for all $[0,\tim].$

Again, by the definition of $E^{h}$, Equation \eqref{eq: hspacetimeharmonic} and Lemma 3.3 in \cite{Piepersethmacher2025Class} we have that
\begin{align*}
    \df E^{h}_t &=  \frac{1}{C^{h}} \left( \prod_{j=1}^{i-1} k(X_{t_j},y_j) \right) \df h(t,X_t) \\
    &= \frac{1}{C^{h}} \left( \prod_{j=1}^{i-1} k(X_{t_j},y_i) \right) \langle Q^{\frac12} \Df_x h(t,X_t), \df W_t \rangle \\
    &= E^{h}_t \df M^{h}_t, \quad t \in (\tim,\ti).
\end{align*}

From this it is apparent that $\bar{E}^{h}_t := E^{h}_t/E^{h}_{\tim}$ solves 
\begin{align}
    \begin{cases}
        \df \bar{E}^{h}_t &= \bar{E}^{h}_t \df M^{h}_t, \quad t \in (\tim,\ti),  \\
    \df \bar{E}^{h}_{\tim} &= 1.
\end{cases}
\end{align}
It follows that 
\begin{align*}
    \bar{E}^h_t = \exp \left(\int_{\tim}^t \langle Q^{\frac12} \Df_x \log h(s,X_s), \df W_s \rangle  - \frac{1}{2} \int_{\tim}^t |\langle Q^{\frac12} \Df_x \log h(s,X_s)|^2 \, \df s \right)
\end{align*}
for all $t \in (\tim,\ti]$ and hence
\begin{align*}
E^h_t &= E^h_{\tim} \exp \left(\int_{\tim}^t \langle Q^{\frac12} \Df_x \log h(s,X_s), \df W_s \rangle  - \frac{1}{2} \int_{\tim}^t |\langle Q^{\frac12} \Df_x \log h(s,X_s)|^2 \, \df s \right) \\
&= \dfrac{h(0,X_0)}{C^h} \calE(M^h)_{\tim} \exp \left(\int_{\tim}^t \langle Q^{\frac12} \Df_x \log h(s,X_s), \df W_s \rangle  - \frac{1}{2} \int_{\tim}^t |\langle Q^{\frac12} \Df_x \log h(s,X_s)|^2 \, \df s \right)\\
&= \dfrac{h(0,X_0)}{C^h} \calE(M^h)_t.
\end{align*}
The claim in \eqref{eq: a1p092873} then follows inductively, thereby finishing the proof.

\end{proof}

\subsection{Proofs of Section \ref{sec: guided_measure}}

\begin{proof}[Proof of Theorem \ref{thm: g_LtRtalphat}]

\textit{Step One.} We begin deriving $L_t, R_t$ and $\alpha_t$ on the interval $(\tnm,\tn]$.
By definition of $g$ in \eqref{eq: def_g}, we have $g(\tn,x) = k(x_n,y_n) = f(y_n ; L x, \Sigma)$.
Moreover, from the backwards recursion \eqref{eq: g_BIF2} it follows that 
\begin{align*}
        g(t,x) &= \int f(y_n ; L x_{\tn}, \Sigma)\, \nu_{t,\tn}(x, \df x_{\tn}) \\
        &= \int f(y_n ; \xi, \Sigma) f\left(\xi; L S_{t_n-t}x + L \int_t^{t_n} S_{t_n-s} a_s \, \df s, L Q_{t_n-t} L^*\right)\, \df \xi \\
        &= \int f(y_n ; \xi, \Sigma) f\left(\xi; L_t x + \alpha_t, L Q_{t_n-t} L^*\right)\, \df \xi.
\end{align*}
Here we use in the second equality that $\xi = Lx_{\tn}$ under $\nu_{t,\tn}(x, \df x_{\tn})$ is a Gaussian measure on $\R^m$ with mean 
\begin{align*}
    L S_{t_n-t}x + L \int_t^{t_n} S_{t_n-s} a_s \, \df s = L_t x + \alpha_t
\end{align*}
and covariance matrix $L Q_{t_n-t} L^*$. Now, integrating out the latent Gaussian variable $\xi$ gives
\begin{align*}
    g(t,x) = f(y_n; L_t x + \alpha_t, \Sigma + L Q_{t_n-t} L^*) = f(y_n; L_t x + \alpha_t, R_t),
\end{align*}
with $L_t$, $\alpha_t$ and $R_t$ as defined in \eqref{eq: def_Lt} and \eqref{eq: def_Rt_alphat}.

\textit{Step Two.} 
Consider $g(\tnm,x)$ evaluated at the observation time $\tnm$. From the second equality in the backwards recursion \eqref{eq: g_BIF2} it follows that
\begin{align*}
    g(\tnm,x) = k(y_{n-1},x) \, g^+(\tnm,x) = f(y_{n-1}; L x, \Sigma) \,f(y_n; L^+_{\tnm} + \alpha^+_{\tnm}, R^+_{\tnm}).
\end{align*}
Expanding the Gaussian densities on the right-hand side and simple algebra gives that
\begin{align*}
    g(\tnm,x) &= f(y_{n-1}^+; L_{\tnm} x + \alpha_{\tnm}, R_{\tnm}) 
\end{align*}
with 
\begin{align*}
    L_{\tnm} &= \begin{bmatrix}  L  \\  L^+_{\tnm}\end{bmatrix}, \,
    R_{\tnm} = \begin{bmatrix} 
\Sigma & 0 \\
0 & R^+_{\tnm} \\ 
\end{bmatrix}, \,
\alpha_{\tnm} = \begin{bmatrix}  0 \\ \alpha^+_{\tnm} \end{bmatrix}.
\end{align*}

\textit{Step Three.}
The general claim follows by repeating the backwards recursion of the first two steps to obtain $g(t,x)$ on the complete interval $[0,t_n].$
\end{proof}

\begin{proof}[Proof of Theorem \ref{thm: dPg}]
The proof of Theorem \ref{thm: dPg} follows roughly the same structure as the second part of the proof of Theorem \ref{thm: dPh}.
We show that $E^g$ as defined in Eq. \eqref{eq: def_Egt} equals the Doléans-Dade exponential
\begin{align}
\label{eq: ap912783987p123}
	E^g_t = \frac{g(0,X_0)}{C^g}\calE(M^g)_t, \quad t \in [0,T],
\end{align}
of the martingale $M^g_t := \int_0^t \langle Q^{\frac12} G(s,X_s), \df W_s \rangle$. If $\calE(M^g)_t$ is then a $\P$-martingale, Theorem \ref{thm: dPg} follows from an application of the Girsanov theorem. 

First note that, following Theorem \ref{thm: g_LtRtalphat}, $g(t,x)$ is bounded, continuous and Fréchet differentiable in $x$ on any interval $(\tim, \ti)$ with 
\begin{align*}
	\Df_x g(t,x) = g(t,x) \Df_x \log g(t,x) = g(t,x) G(t,x). 
\end{align*}
Hence, denoting by $K$ the infinitesimal space-time generator of $X$, it follows from \cite{Manca2009Fokker}, Theorem 4.1 that $K g(t,x) = \langle F(t,x), \Df_x g(t,x) \rangle$. Lemma 3.3 of \cite{Piepersethmacher2025Class} therefore gives that 
\begin{align}
\label{eq: ap918273123123}
g(t,X_t) = g(0,X_0) + \int_0^t \langle  F(s,X_s), \Df_x g(s,X_s) \rangle \df s + \int_0^t \langle Q^{\frac12} \Df_x g(s,X_s), \df W_s \rangle, \quad t \in [0,t_1). 
\end{align}
Consequently, plugging in $E^g$ as in \eqref{eq: def_Egt} and applying the integration by parts for semimartingales we have on $[0,t_1)$ that
\begin{align*}
\df E^g_t &= \dfrac{1}{C^g} \df \left( g(t,X_t)  \exp \left(- \int_0^t \langle F(s,X_s), G(s,X_s) \rangle \, \df s \right) \right) \\
&=\dfrac{1}{C^g}   \exp \left(- \int_0^t \langle F(s,X_s), G(s,X_s) \rangle \, \df s \right)\left[ \langle F(t,X_t), \Df_x g(t,X_t) \rangle \df t  + \langle Q^{\frac12} \Df_x g(t,X_t), \df W_t \rangle \right] \\
&\quad - \dfrac{1}{C^g} g(t,X_t) \exp \left(- \int_0^t \langle F(s,X_s), G(s,X_s) \rangle \, \df s \right)  \langle F(t,X_t), G(t,X_t) \rangle \, \df t \\
&= \dfrac{1}{C^g}   \exp \left(- \int_0^t \langle F(s,X_s), G(s,X_s) \rangle \, \df s \right) \langle Q^{\frac12} \Df_x g(t,X_t), \df W_t \rangle \\
&= \dfrac{g(t,X_t)}{C^g}   \exp \left(- \int_0^t \langle F(s,X_s), G(s,X_s) \rangle \, \df s \right) \langle Q^{\frac12}  G(t,X_t), \df W_t \rangle \\
&= E^g_t \df M^g_t.
\end{align*}
This shows that $\bar{E}^g_t := E^g_t / E^g_0 = \calE(M^g)_t$ and hence \eqref{eq: ap912783987p123} for all $t \in [0,t_1]$. The extension onto the complete time interval $[0,T]$ follows by the same inductive argument as in the proof of Theorem \ref{thm: dPh}.
Lastly, to show that $\calE(M^g)$ is a proper martingale, it suffices to note that $x \mapsto G(t,x)$ is Lipschitz continuous in $x$, uniformly on $[0,T]$. The martingale property of $\calE(M^g)$ then follows from \cite{Piepersethmacher2025Class}, Lemma C.1.
\end{proof}

\subsection{Proofs of Section \ref{sec: comp_efficient_guiding}}

\begin{proof}[Proof of Theorem \ref{thm: G_UtVt}]
 
We start by showing that $U_t$ is a solution to \eqref{eq: dU_t} in a mild sense, i.e. that $U_t$ satisfies
\begin{align}
\label{eq: dUt_mild_sol}
U_t = S^*_{\ti-t} U_{\ti} S_{\ti-t} - \int_t^{\ti} S^*_{s-t} U_s Q U_s S_{s-t} \, \df s, \quad t \in (\tim,\ti],
\end{align}
and $U_{\ti} = L^* \Sigma L + U^+_{\ti}$. The latter follows from plugging in $L_{\ti}$ and $R_{\ti}$ as given in Theorem \ref{thm: g_LtRtalphat} into the definition of $U_t$.
To show \eqref{eq: dUt_mild_sol}, note that $R_t$ as defined in Theorem \ref{thm: g_LtRtalphat} satisfies 
\begin{align}
\label{eq: a918o273}
    \dfrac{\df}{\df t} R_t= \dfrac{\df}{\df t} \left( L_{\ti} Q_{\ti-t} L^*_{\ti} \right)= - L_{\ti} S_{\ti-t} Q S^*_{\ti-t} L^*_{\ti} = -L_t Q L^*_t.
\end{align}
Here, we used that $\df Q_t = S_t Q S^*_t \df t$ following the definition of $Q_t$ in \eqref{eq: def_Qt}.
It follows
\begin{align}
\label{eq: a129o387}
    \dfrac{\df}{\df t} R^{-1}_t &= - R^{-1}_t \left(  \dfrac{\df}{\df t} R_t \right)R^{-1}_t = R^{-1}_t L_t Q L^*_t R^{-1}_t
\end{align}
and thus 
\begin{align}
\label{eq: ap9018273}
    R^{-1}_{t} &= R^{-1}_{\ti} - \int_t^{\ti} R^{-1}_s L_s Q L^*_s R^{-1}_s \, \df s.
\end{align}
Hence, plugging \eqref{eq: ap9018273} into \eqref{eq: def_Ut_Vt} and using that $L_t = L_{s} S_{s-t}$ for all $s \in [t, \ti]$, this gives
\begin{align*}
    L^*_t R^{-1}_t L_t &= L^*_t R^{-1}_{\ti} L_t -  L^*_t  \left( \int_t^{\ti} R^{-1}_s L_s Q L^*_s R^{-1}_s \, \df s \right) L_t \\
    &= S^*_{\ti-t} L^*_{\ti} R^{-1}_{\ti} L_{\ti} S_{\ti - t} -  \int_t^{\ti} S^*_{s-t} L^*_{s} R^{-1}_s L_s Q L^*_s R^{-1}_s L_s S_{s-t}\, \df s \\
    &= S^*_{\ti-t} U_{\ti} S_{\ti - t} - \int_t^{\ti} S^*_{s-t} U_s Q U_s S_{s-t}\, \df s.
\end{align*}
This shows \eqref{eq: dUt_mild_sol} and uniqueness of the mild solution follows from uniqueness of the infinite-dimensional forward Ricatti equation, see for example \cite{Burns2015Solutions}, Theorem 3.6 as well as \cite{Bensoussan2007Representation}, Chapter 2.2 and references within.

To show that $V_t$ satisfies \eqref{eq: dV_t} in a mild sense, first note that
\begin{align}
\label{eq: ap91782397123}
\begin{split}
    \dfrac{\df}{\df t} \left( R_t^{-1} (y_i^+ - \alpha_t) \right) &= \left( \dfrac{\df}{\df t} R_t^{-1} \right) (y_i^+ - \alpha_t) - R_t^{-1} \left( \dfrac{\df}{\df t} \alpha_t \right)  \\
    &= R^{-1}_t L_t Q L^*_t R^{-1}_t (y_i^+ - \alpha_t) + R_t^{-1} L_t a_t \\
    &= R^{-1}_t L_t Q V_t + R_t^{-1} L_t a_t, \quad t \in (\tim,\ti).
\end{split}
\end{align}
Here we used \eqref{eq: a129o387} and the definition of $\alpha_t$ in the second equality and the definition of $V_t$ in the last.
This gives 
\begin{align*}
    R_t^{-1} (y_i^+ - \alpha_t) = R_{\ti}^{-1} (y_i^+ - \alpha_{\ti}) - \int_t^{\ti} R^{-1}_s L_s Q V_s + R_s^{-1} L_s a_s \, \df s, \quad t \in (\tim,\ti),
\end{align*}
and consequently, using that $L^*_t = S^*_{s-t} L^*_s$ for all $s \in [t,\ti]$ and the definition of $U_s$,
\begin{align*}
    V_t &= L^*_t R_t^{-1} (y_i^+ - \alpha_t) \\
    &= L^*_t R_{\ti}^{-1} (y_i^+ - \alpha_{\ti}) - L_t^* \left( \int_t^{\ti} R^{-1}_s L_s Q V_s + R_s^{-1} L_s a_s \, \df s \right) \\
    &= S^*_{\ti-t} L^*_{\ti} R_{\ti}^{-1} (y_i^+ - \alpha_{\ti}) -  \left( \int_t^{\ti} S^*_{s-t} L^*_sR^{-1}_s L_s Q V_s + S^*_{s-t} L^*_sR_s^{-1} L_s a_s \, \df s \right) \\
    &= S^*_{\ti-t} V_{\ti} - \left( \int_t^{\ti} S^*_{s-t} U_s Q V_s + S^*_{s-t} U_s a_s \, \df s \right), \quad t \in (\tim,\ti).
\end{align*}
This, together with the fact that $V_{\ti} = L^* \Sigma^{-1} y_i + V^+_{\ti}$ follows from plugging in $L_{\ti}, R_{\ti}$ and $\alpha_{\ti}$ as given by Theorem \ref{thm: g_LtRtalphat} into the definition of $V_t$, shows that $V_t$ is a mild solution to \eqref{eq: dV_t}. Moreover, uniqueness of $V$ is a direct consequence of classical results concerning the uniqueness of mild solutions to the abstract Cauchy problem.
\end{proof}

\begin{proof}[Proof of Proposition \ref{prop: dct}]
    The first claim follows from taking the log of $g(t,x)$ in \eqref{eq: g_BIF_computed}. 
    To derive the ODE \eqref{eq: dct}, first note that 
    \begin{align*}
        \dfrac{\df}{\df t} \det(R_t) &= \det(R_t) \tr \left[ R^{-1}_t \left( \dfrac{\df}{\df t} R_t \right) \right] = - \det(R_t) \tr \left[ R^{-1}_t L_t Q L^*_t \right] = - \det(R_t) \tr \left[ U_t Q  \right].
    \end{align*}
    on any $(\tim, \ti).$ Here, we use \eqref{eq: a918o273} in the second step and the definition of $U_t$ in the last. It follows that 
    \begin{align}
    \label{eq: ap9182732398}
     \dfrac{\df}{\df t}  \log(\det(R_t)) = - \tr \left[ U_t Q  \right].
    \end{align}
    Furthermore, plugging in $\df \alpha_t = -L_t a_t \df t$ and \eqref{eq: a129o387} it holds that 
    \begin{align}
    \label{eq: ao1987263123}
    \begin{split}
        \dfrac{\df}{\df t} \left \langle y^+_i - \alpha_t, R^{-1}_t (y^+_i - \alpha_t)\right \rangle  &= \left \langle  \dfrac{\df}{\df t} (y^+_i - \alpha_t), R^{-1}_t (y^+_i - \alpha_t)\right \rangle  \\
        &\quad+ \left \langle  y^+_i - \alpha_t, \dfrac{\df}{\df t} \left( R^{-1}_t (y^+_i - \alpha_t)\right) \right \rangle  \\
        &= \left \langle   L_t a_t , R^{-1}_t (y^+_i - \alpha_t)\right \rangle  \\
        &\quad+ \left \langle  y^+_i - \alpha_t, R^{-1}_t L_t Q L^*_t R^{-1}_t (y^+_i - \alpha_t) + R^{-1}_t L_t a_t \right \rangle \\
        &= 2 \left \langle   a_t ,  L^*_t R^{-1}_t (y^+_i - \alpha_t)\right \rangle + \left \langle  y^+_i - \alpha_t, R^{-1}_t L_t Q L^*_t R^{-1}_t (y^+_i - \alpha_t) \right \rangle \\
        &= 2 \left \langle   a_t ,  V_t \right \rangle + \left \langle V_t, Q V_t \right \rangle.
    \end{split}
    \end{align}
    Hence, combining \eqref{eq: ap9182732398} and \eqref{eq: ao1987263123} gives 
    \begin{align*}
         \dfrac{\df}{\df t} c_t &= \dfrac{1}{2} \tr \left[ U_t Q  \right] - \left \langle   a_t ,  V_t \right \rangle -\dfrac{1}{2} \left \langle V_t, Q V_t \right \rangle, \quad t \in (\tim, \ti).
    \end{align*}
    Lastly, the expression for the terminal condition $c_{\ti}$ follows from plugging $R_{\ti}$ and $\alpha_{\ti}$ as given in Theorem \ref{thm: g_LtRtalphat} into the definition of $c_t$.
\end{proof}

\section{}
\label{app: B}

\subsection{Proofs of Section \ref{sec: amari}}

\begin{proof}[Proof of Proposition \ref{prop: amari_G}]
As noted in Remark \ref{rem: filtering_G}, the function $G_i$ for the one-step-ahead guiding distribution is given by
\begin{align*}
	G_i(t,x) = L_t^* R_t^{-1} \left( y_i - L_t x \right), \quad t \in [\tim,\ti],
\end{align*}
with $L_t = L S_{\ti-t}$ and $R_t = \Sigma + L Q_{\ti-t} L^*$.
Let us define, with slight abuse of notation, $\Delta_i := t_i - t$.

Given $A x = -x$, it holds that $(S_t)_t$ is the semigroup given by $S_{\Delta_i} = \exp(- \Delta_i)$.
Hence, following the definition of $Q_t$ given in \eqref{eq: def_Qt}, we have
\begin{align*}
	Q_{\Delta_i} &= \int_0^{\Delta_i} \exp (- 2 s ) Q \, \df s = \frac{1 - \exp(- 2 \Delta_i)}{2} Q. 
\end{align*}
Plugging this into the expression of $G_i$ above gives the first claim of Proposition \ref{prop: amari_G}.
The matrix representation of $L Q L^*$ in the second claim can be deduced from the fact that, for any $y \in \R^m$,
\begin{align*}
(L Q L^*) y &= LQ \left(\sum_{i=1}^m y_i \om_i \right) \\
&= L \left( \sum_{i=1}^m y_i \sum_{l=1}^{\infty} q_l \langle \om_i, e_l \rangle e_l \right) \\
&= \left( \sum_{i=1}^m y_i \sum_{l=1}^{\infty} q_l \langle \om_i, e_l \rangle \langle \om_j, e_l \rangle \right)_{j=1}^m \in \R^m.
\end{align*}

\end{proof}

\section{}
\label{app: C}

The idea of guiding bears similarities to a data assimilation technique known as nudging, where a dynamical system described by a partial differential equation is adjusted by superimposing a data-dependent term. This ``nudging'' term is designed to ``nudge'' trajectories of the system  towards  observations. The underlying idea  dates back to the 1960s and 1970s and is also known as Newtonian relaxation (\cite{hoke1976initialization}) and   Luenberger observer  in control theory for linear systems (\cite{luenberger1964observing}).  The proposed nudging term is rather ad-hoc and consequently does not satisfy any minimum error criterion. Recently, \cite{conti2025model} (see also \cite{conti2022physical}) derived  nudging terms in a more principled way. In particular, the two proposed nudging terms proposed in \cite{conti2025model} --PNDRQF (Physical Nudging Data-Retrieval Quadratic Form) and PNDRQ (Physical Nudging Data-Retrieval Quadrature)-- are special instances of guiding terms inherited from a Gaussian system. In \cite{Singh2025Dataassimilationusingglobal}, a particle filter for SPDEs is introduced in which the nudging term is optimised to increase the effective sample size of the ensemble of particles.

	The nudging approach differs from ours in several ways. Firstly, nudging is designed to tackle the filtering problem, and nudging terms are thereby restricted to one future observation. 
	In contrast, the guiding terms we propose can be generalised to take {\it all} future observations into account, thereby enabling methodology to efficiently address the smoothing problem.  
	Moreover, contrary to our approach, no Radon-Nikodym derivative for the law of the nudged process with respect to the true conditioned process is derived, precluding exact inference. 
	Lastly, current approaches derive the nudging terms \textit{after} discretising infinite-dimensional SDEs into a high-dimensional SDE. Guided processes, however, are derived in the fully infinite-dimensional Hilbert space setting, hence ensuring methodology that remains valid under mesh refinement.

\bibliographystyle{plainnat}  
\bibliography{references}  

\begin{thebibliography}{72}
\providecommand{\natexlab}[1]{#1}
\providecommand{\url}[1]{\texttt{#1}}
\expandafter\ifx\csname urlstyle\endcsname\relax
  \providecommand{\doi}[1]{doi: #1}\else
  \providecommand{\doi}{doi: \begingroup \urlstyle{rm}\Url}\fi

\bibitem[Alswaihli et~al.(2018)Alswaihli, Potthast, Bojak, Saddy, and
  Hutt]{alswaihliKernelReconstructionDelayed2018}
Jehan Alswaihli, Roland Potthast, Ingo Bojak, Douglas Saddy, and Axel Hutt.
\newblock Kernel {{Reconstruction}} for {{Delayed Neural Field Equations}}.
\newblock \emph{The Journal of Mathematical Neuroscience}, 8\penalty0
  (1):\penalty0 3, December 2018.
\newblock ISSN 2190-8567.
\newblock \doi{10.1186/s13408-018-0058-8}.

\bibitem[Altmeyer et~al.(2023)Altmeyer, Cialenco, and
  Pasemann]{Altmeyer2023Parameter}
Randolf Altmeyer, Igor Cialenco, and Gregor Pasemann.
\newblock Parameter estimation for semilinear spdes from local measurements.
\newblock \emph{Bernoulli}, 29\penalty0 (3):\penalty0 2035--2061, 2023.

\bibitem[Amari(1977)]{amari1977dynamics}
Shun-ichi Amari.
\newblock Dynamics of pattern formation in lateral-inhibition type neural
  fields.
\newblock \emph{Biological {C}ybernetics}, 27\penalty0 (2):\penalty0 77--87,
  1977.

\bibitem[Andrieu and Thoms(2008)]{Andrieu2008Tutorial}
Christophe Andrieu and Johannes Thoms.
\newblock A tutorial on adaptive mcmc.
\newblock \emph{Statistics and computing}, 18:\penalty0 343--373, 2008.

\bibitem[Bain and Crisan(2009)]{bain2009fundamentals}
A.~Bain and D.~Crisan.
\newblock \emph{Fundamentals of Stochastic Filtering}.
\newblock Stochastic Modelling and Applied Probability. Springer New York,
  2009.
\newblock ISBN 9780387768960.
\newblock URL \url{https://books.google.de/books?id=hE3KF5Wf6ecC}.

\bibitem[Bensoussan et~al.(2007)Bensoussan, Da~Prato, Delfour, and
  Mitter]{Bensoussan2007Representation}
Alain Bensoussan, Giuseppe Da~Prato, Michel~C Delfour, and Sanjoy~K Mitter.
\newblock \emph{Representation and control of infinite dimensional systems},
  volume~1.
\newblock Springer, 2007.

\bibitem[Beskos et~al.(2008)Beskos, Roberts, Stuart, and Voss]{Beskos08MCMC}
Alexandros Beskos, Gareth Roberts, Andrew Stuart, and Jochen Voss.
\newblock Mcmc methods for diffusion bridges.
\newblock \emph{Stochastics and Dynamics}, 08\penalty0 (03):\penalty0 319--350,
  2008.
\newblock \doi{10.1142/S0219493708002378}.
\newblock URL \url{https://doi.org/10.1142/S0219493708002378}.

\bibitem[Borovitskiy et~al.(2020)Borovitskiy, Terenin, Mostowsky,
  et~al.]{Borovitskiy2020Matern}
Viacheslav Borovitskiy, Alexander Terenin, Peter Mostowsky, et~al.
\newblock Mat{\'e}rn gaussian processes on riemannian manifolds.
\newblock \emph{Advances in Neural Information Processing Systems},
  33:\penalty0 12426--12437, 2020.

\bibitem[Bressloff(2014)]{bressloffWavesNeuralMedia2014}
Paul~C. Bressloff.
\newblock \emph{Waves in {{Neural Media}}}.
\newblock Lecture {{Notes}} on {{Mathematical Modelling}} in the {{Life
  Sciences}}. Springer New York, New York, NY, 2014.
\newblock ISBN 978-1-4614-8865-1 978-1-4614-8866-8.
\newblock \doi{10.1007/978-1-4614-8866-8}.

\bibitem[Burns and Rautenberg(2015)]{Burns2015Solutions}
John~A Burns and Carlos~N Rautenberg.
\newblock Solutions and approximations to the riccati integral equation with
  values in a space of compact operators.
\newblock \emph{SIAM Journal on Control and Optimization}, 53\penalty0
  (5):\penalty0 2846--2877, 2015.

\bibitem[Cheung(2025)]{Cheung2025Approximation}
James Cheung.
\newblock On the approximation of operator-valued riccati equations in hilbert
  spaces.
\newblock \emph{Journal of Mathematical Analysis and Applications},
  547\penalty0 (1):\penalty0 129250, 2025.

\bibitem[Chopin et~al.(2020)Chopin, Papaspiliopoulos,
  et~al.]{Chopin2020Introduction}
Nicolas Chopin, Omiros Papaspiliopoulos, et~al.
\newblock \emph{An introduction to sequential Monte Carlo}, volume~4.
\newblock Springer, 2020.

\bibitem[Cialenco(2018)]{cialenco2018statistical}
Igor Cialenco.
\newblock Statistical inference for spdes: an overview.
\newblock \emph{Statistical Inference for Stochastic Processes}, 21\penalty0
  (2):\penalty0 309--329, 2018.
\newblock \doi{10.1007/s11203-018-9175-4}.

\bibitem[Cialenco and Glatt-Holtz(2011)]{Cialenco11Parameter}
Igor Cialenco and Nathan Glatt-Holtz.
\newblock Parameter estimation for the stochastically perturbed navier–stokes
  equations.
\newblock \emph{Stochastic Processes and their Applications}, 121:\penalty0
  701--724, 2011.

\bibitem[Cialenco et~al.(2024)Cialenco, Kim, and
  Pasemann]{Cialenco2024Statistical}
Igor Cialenco, Hyun-Jung Kim, and Gregor Pasemann.
\newblock Statistical analysis of discretely sampled semilinear spdes: a power
  variation approach.
\newblock \emph{Stochastics and Partial Differential Equations: Analysis and
  Computations}, 12\penalty0 (1):\penalty0 326--351, 2024.

\bibitem[Clark(1990)]{Clarke}
J.~M.~C. Clark.
\newblock The simulation of pinned diffusions.
\newblock In \emph{Decision and Control, 1990., Proceedings of the 29th IEEE
  Conference on}, pages 1418--1420. IEEE, 1990.

\bibitem[Conti et~al.(2022)Conti, Aydo{\u{g}}du, Gualdi, Navarra, and
  Tribbia]{conti2022physical}
Giovanni Conti, Ali Aydo{\u{g}}du, Silvio Gualdi, Antonio Navarra, and Joe
  Tribbia.
\newblock On the physical nudging equations.
\newblock \emph{Climate Dynamics}, 58\penalty0 (5):\penalty0 1459--1476, 2022.

\bibitem[Conti et~al.(2025)Conti, van Leeuwen, and Anderson]{conti2025model}
Giovanni Conti, Peter~Jan van Leeuwen, and Jeffrey Anderson.
\newblock Model and observation-error covariance matrix information in the
  physical nudging equations.
\newblock \emph{Quarterly Journal of the Royal Meteorological Society}, page
  e4979, 2025.

\bibitem[Coombes and
  Wedgwood(2023)]{coombesNeurodynamicsAppliedMathematics2023}
Stephen Coombes and Kyle C.~A. Wedgwood.
\newblock \emph{Neurodynamics: {{An Applied Mathematics Perspective}}},
  volume~75 of \emph{Texts in {{Applied Mathematics}}}.
\newblock Springer International Publishing, Cham, 2023.
\newblock ISBN 978-3-031-21915-3 978-3-031-21916-0.
\newblock \doi{10.1007/978-3-031-21916-0}.

\bibitem[Cotter et~al.(2025)Cotter, Crisan, and Singh]{Cotter2025camassaholm}
Colin~J. Cotter, Dan Crisan, and Maneesh~Kumar Singh.
\newblock Data assimilation for the stochastic camassa-holm equation using
  particle filtering: A numerical investigation.
\newblock In Bertrand Chapron, Dan Crisan, Darryl~D. Holm, Etienne M{\'e}min,
  and Jane-Lisa Coughlan, editors, \emph{Stochastic Transport in Upper Ocean
  Dynamics III}, pages 137--160, Cham, 2025. Springer Nature Switzerland.
\newblock ISBN 978-3-031-70660-8.

\bibitem[Cotter et~al.(2013)Cotter, Roberts, Stuart, and White]{Cotter2013MCMC}
S.~L. Cotter, G.~O. Roberts, A.~M. Stuart, and D.~White.
\newblock {MCMC Methods for Functions: Modifying Old Algorithms to Make Them
  Faster}.
\newblock \emph{Statistical Science}, 28\penalty0 (3):\penalty0 424 -- 446,
  2013.
\newblock \doi{10.1214/13-STS421}.
\newblock URL \url{https://doi.org/10.1214/13-STS421}.

\bibitem[D'Elia et~al.(2024)D'Elia, Gunzburger, and
  Vollmann]{duNonlocalModelingAnalysis2019}
Marta D'Elia, Max~D. Gunzburger, and Christian Vollmann.
\newblock \emph{Nonlocal Integral Equation Continuum Models: Nonstandard
  Symmetric Interaction Neighborhoods and Finite Element Discretizations}.
\newblock Number~31 in Computational Science and Engineering. {Society for
  Industrial and Applied Mathematics}, Philadelphia, 2024.
\newblock ISBN 978-1-61197-804-9 978-1-61197-805-6.

\bibitem[Delyon and Hu(2006)]{DelyonHu}
Bernard Delyon and Ying Hu.
\newblock Simulation of conditioned diffusion and application to parameter
  estimation.
\newblock \emph{Stochastic Processes and their Applications}, 116\penalty0
  (11):\penalty0 1660 -- 1675, 2006.
\newblock ISSN 0304-4149.
\newblock \doi{https://doi.org/10.1016/j.spa.2006.04.004}.
\newblock URL
  \url{http://www.sciencedirect.com/science/article/pii/S0304414906000469}.

\bibitem[Doucet et~al.(2001)Doucet, De~Freitas, Gordon,
  et~al.]{Doucet2001Sequential}
Arnaud Doucet, Nando De~Freitas, Neil~James Gordon, et~al.
\newblock \emph{Sequential Monte Carlo methods in practice}, volume~1.
\newblock Springer, 2001.

\bibitem[Du(2019)]{deliaNonlocalIntegralEquation2024}
Qiang Du.
\newblock \emph{Nonlocal Modeling, Analysis, and Computation}.
\newblock Number~94 in {{CBMS-NSF}} Regional Conference Series in Applied
  Mathematics. {Society for Industrial and Applied Mathematics}, Philadelphia,
  2019.
\newblock ISBN 978-1-61197-561-1.

\bibitem[Ermentrout and
  Terman(2010)]{ermentroutMathematicalFoundationsNeuroscience2010}
G.~Bard Ermentrout and David~H. Terman.
\newblock \emph{Mathematical {{Foundations}} of {{Neuroscience}}}, volume~35 of
  \emph{Interdisciplinary {{Applied Mathematics}}}.
\newblock Springer New York, New York, NY, 2010.
\newblock ISBN 978-0-387-87707-5 978-0-387-87708-2.
\newblock \doi{10.1007/978-0-387-87708-2}.

\bibitem[Faugeras and Inglis(2015)]{faugerasStochasticNeuralField2015}
O.~Faugeras and J.~Inglis.
\newblock Stochastic neural field equations: A rigorous footing.
\newblock \emph{Journal of Mathematical Biology}, 71\penalty0 (2):\penalty0
  259--300, August 2015.
\newblock ISSN 0303-6812, 1432-1416.
\newblock \doi{10.1007/s00285-014-0807-6}.

\bibitem[Gaudlitz(2023)]{Gaudlitz2023Non}
Sascha Gaudlitz.
\newblock Non-parametric estimation of the reaction term in semi-linear spdes
  with spatial ergodicity.
\newblock \emph{arXiv preprint arXiv:2307.05457}, 2023.

\bibitem[Gaudlitz and Rei{\ss}(2023)]{Gaudlitz2023Estimation}
Sascha Gaudlitz and Markus Rei{\ss}.
\newblock Estimation for the reaction term in semi-linear spdes under small
  diffusivity.
\newblock \emph{Bernoulli}, 29\penalty0 (4):\penalty0 3033--3058, 2023.

\bibitem[Gibson(1983)]{Gibson1983Linear}
J.~S. Gibson.
\newblock Linear-quadratic optimal control of hereditary differential systems:
  Infinite dimensional riccati equations and numerical approximations.
\newblock \emph{SIAM Journal on Control and Optimization}, 21\penalty0
  (1):\penalty0 95--139, 1983.
\newblock \doi{10.1137/0321006}.

\bibitem[Hairer(2009)]{Hairer2009Introduction}
Martin Hairer.
\newblock An introduction to stochastic pdes.
\newblock \emph{arXiv preprint arXiv:0907.4178}, 2009.

\bibitem[Hildebrandt and Trabs(2023)]{HildebrandtTrabs2023Nonparametric}
Florian Hildebrandt and Mathias Trabs.
\newblock Nonparametric calibration for stochastic reaction--diffusion
  equations based on discrete observations.
\newblock \emph{Stochastic Processes and their Applications}, 162:\penalty0
  171--217, 2023.

\bibitem[Hoke and Anthes(1976)]{hoke1976initialization}
J.~E. Hoke and R.~A. Anthes.
\newblock The initialization of numerical models by a dynamic-initialization
  technique.
\newblock \emph{Monthly Weather Review}, 104\penalty0 (12):\penalty0
  1551--1556, 1976.

\bibitem[Izhikevich(2006)]{izhikevichDynamicalSystemsNeuroscience2006}
Eugene~M. Izhikevich.
\newblock \emph{Dynamical {{Systems}} in {{Neuroscience}}: {{The Geometry}} of
  {{Excitability}} and {{Bursting}}}.
\newblock The MIT Press, 2006.
\newblock ISBN 978-0-262-27607-8.
\newblock \doi{10.7551/mitpress/2526.001.0001}.

\bibitem[Kilpatrick and Ermentrout(2013)]{kilpatrick2013wandering}
Zachary~P Kilpatrick and Bard Ermentrout.
\newblock Wandering bumps in stochastic neural fields.
\newblock \emph{SIAM Journal on Applied Dynamical Systems}, 12\penalty0
  (1):\penalty0 61--94, 2013.

\bibitem[Kilpatrick and Faye(2014)]{kilpatrickPulseBifurcationsStochastic2014}
Zachary~P. Kilpatrick and Gr{\'e}gory Faye.
\newblock Pulse {{Bifurcations}} in {{Stochastic Neural Fields}}.
\newblock \emph{SIAM Journal on Applied Dynamical Systems}, 13\penalty0
  (2):\penalty0 830--860, January 2014.
\newblock ISSN 1536-0040.
\newblock \doi{10.1137/140951369}.

\bibitem[Kuehn and Riedler(2014)]{kuehnLargeDeviationsNonlocal2014}
Christian Kuehn and Martin~G Riedler.
\newblock Large {{Deviations}} for {{Nonlocal Stochastic Neural Fields}}.
\newblock \emph{The Journal of Mathematical Neuroscience}, 4\penalty0
  (1):\penalty0 1, 2014.
\newblock ISSN 2190-8567.
\newblock \doi{10.1186/2190-8567-4-1}.

\bibitem[Kulikova and Kulikov(2023)]{kulikovaDatadrivenParameterEstimation2023}
M.~V. Kulikova and G.~{\relax Yu}. Kulikov.
\newblock Data-driven parameter estimation in stochastic dynamic neural fields
  by state-space approach and continuous-discrete extended {{Kalman}}
  filtering.
\newblock \emph{Digital Signal Processing}, page 104010, March 2023.
\newblock ISSN 1051-2004.
\newblock \doi{10.1016/j.dsp.2023.104010}.

\bibitem[Lang et~al.(2022)Lang, Van~Leeuwen, Crisan, and
  Potthast]{Lang2022Bayesian}
Oana Lang, Peter~Jan Van~Leeuwen, Dan Crisan, and Roland Potthast.
\newblock Bayesian inference for fluid dynamics: a case study for the
  stochastic rotating shallow water model.
\newblock \emph{Frontiers in Applied Mathematics and Statistics}, 8:\penalty0
  949354, 2022.

\bibitem[Law et~al.(2015)Law, Stuart, and Zygalakis]{Law2015}
Kody Law, Andrew Stuart, and Konstantinos Zygalakis.
\newblock \emph{Data Assimilation}.
\newblock Springer. Springer, September 2015.
\newblock ISBN 3-319-20325-8.

\bibitem[Lima(2019)]{limaNumericalInvestigationStochastic2019}
Pedro~M. Lima.
\newblock Numerical {{Investigation}} of {{Stochastic Neural Field Equations}}.
\newblock In Vinai~K. Singh, David Gao, and Andreas Fischer, editors,
  \emph{Advances in {{Mathematical Methods}} and {{High Performance
  Computing}}}, volume~41, pages 51--67. Springer International Publishing,
  Cham, 2019.
\newblock ISBN 978-3-030-02486-4 978-3-030-02487-1.
\newblock \doi{10.1007/978-3-030-02487-1_2}.

\bibitem[Lima et~al.(2022)Lima, Erlhagen, Kulikova, and
  Kulikov]{limaNumericalSolutionStochastic2022a}
P.M. Lima, W.~Erlhagen, M.V. Kulikova, and {\relax G.Yu}.~Kulikov.
\newblock Numerical solution of the stochastic neural field equation with
  applications to working memory.
\newblock \emph{Physica A: Statistical Mechanics and its Applications},
  596:\penalty0 127166, June 2022.
\newblock ISSN 03784371.
\newblock \doi{10.1016/j.physa.2022.127166}.

\bibitem[Lindgren et~al.(2011)Lindgren, Rue, and
  Lindstr{\"o}m]{Lindgren2011Explicit}
Finn Lindgren, H{\aa}vard Rue, and Johan Lindstr{\"o}m.
\newblock An explicit link between gaussian fields and gaussian markov random
  fields: the stochastic partial differential equation approach.
\newblock \emph{Journal of the Royal Statistical Society Series B: Statistical
  Methodology}, 73\penalty0 (4):\penalty0 423--498, 2011.

\bibitem[Llopis et~al.(2018)Llopis, Kantas, Beskos, and
  Jasra]{Llopis2018Particle}
Francesc~Pons Llopis, Nikolas Kantas, Alexandros Beskos, and Ajay Jasra.
\newblock Particle filtering for stochastic navier--stokes signal observed with
  linear additive noise.
\newblock \emph{SIAM Journal on Scientific Computing}, 40\penalty0
  (3):\penalty0 A1544--A1565, 2018.

\bibitem[Luenberger(1964)]{luenberger1964observing}
D.G. Luenberger.
\newblock Observing the state of a linear system.
\newblock \emph{IEEE Transactions on Military Electronics}, 8\penalty0
  (2):\penalty0 74--80, 1964.

\bibitem[MacLaurin and Bressloff(2020)]{MacLaurin.2020}
James~N. MacLaurin and Paul~C. Bressloff.
\newblock Wandering bumps in a stochastic neural field: {{A}} variational
  approach.
\newblock \emph{Physica D: Nonlinear Phenomena}, 406:\penalty0 132403, 2020.
\newblock ISSN 0167-2789.
\newblock \doi{10.1016/j.physd.2020.132403}.

\bibitem[Manca(2009)]{Manca2009Fokker}
Luigi Manca.
\newblock Fokker--planck equation for kolmogorov operators with unbounded
  coefficients.
\newblock \emph{Stochastic analysis and applications}, 27\penalty0
  (4):\penalty0 747--769, 2009.

\bibitem[Mider et~al.(2021)Mider, Schauer, and van~der
  Meulen]{mider2021continuous}
Marcin Mider, Moritz Schauer, and Frank van~der Meulen.
\newblock {Continuous-discrete smoothing of diffusions}.
\newblock \emph{Electronic Journal of Statistics}, 15\penalty0 (2):\penalty0
  4295 -- 4342, 2021.
\newblock \doi{10.1214/21-EJS1894}.

\bibitem[Nakamura and Potthast(2015)]{nakamuraInverseModelingIntroduction2015}
Gen Nakamura and Roland Potthast.
\newblock \emph{Inverse Modeling: An Introduction to the Theory and Methods of
  Inverse Problems and Data Assimilation}.
\newblock {{IOP}} Expanding Physics. IOP Publishing, Bristol, UK, 2015.
\newblock ISBN 978-0-7503-1218-9 978-0-7503-1219-6.
\newblock \doi{10.1088/978-0-7503-1218-9}.

\bibitem[Neal(1999)]{Neal1999Regression}
Radford~M Neal.
\newblock {Regression and Classification Using Gaussian Process Priors}.
\newblock In \emph{{Bayesian Statistics 6: Proceedings of the Sixth Valencia
  International Meeting June 6-10, 1998}}. Oxford University Press, 08 1999.
\newblock ISBN 9780198504856.
\newblock \doi{10.1093/oso/9780198504856.003.0021}.
\newblock URL \url{https://doi.org/10.1093/oso/9780198504856.003.0021}.

\bibitem[Papaspiliopoulos and Roberts(2012)]{PapaspiliopoulosRoberts2012}
Omiros Papaspiliopoulos and Gareth~O. Roberts.
\newblock Importance sampling techniques for estimation of diffusion models.
\newblock In Mathieu Kessler, Alexander Linder, and Michael Sørensen, editors,
  \emph{Statistical Methods for Stochastic Differential Equations}, volume 124
  of \emph{CRC Monographs on Statistics \& Applied Probability}, pages
  311--340. CRC Press, Boca Raton, FL, USA, 2012.

\bibitem[Pasemann and Stannat(2020)]{Stannat20Drift}
Gregor Pasemann and Wilhelm Stannat.
\newblock Drift estimation for stochastic reaction-diffusion systems.
\newblock \emph{Electronic Journal of Statistics}, 14:\penalty0 547 --579,
  2020.

\bibitem[Pieper-Sethmacher et~al.(2025{\natexlab{a}})Pieper-Sethmacher, {van
  der Meulen}, and {van der Vaart}]{Piepersethmacher2025Class}
Thorben Pieper-Sethmacher, Frank {van der Meulen}, and Aad {van der Vaart}.
\newblock On a class of exponential changes of measure for stochastic pdes.
\newblock \emph{Stochastic Processes and their Applications}, 185:\penalty0
  104630, 2025{\natexlab{a}}.
\newblock ISSN 0304-4149.
\newblock \doi{https://doi.org/10.1016/j.spa.2025.104630}.

\bibitem[Pieper-Sethmacher et~al.(2025{\natexlab{b}})Pieper-Sethmacher, van~der
  Meulen, and van~der Vaart]{Piepersethmacher2025Simulation}
Thorben Pieper-Sethmacher, Frank van~der Meulen, and Aad van~der Vaart.
\newblock Simulation of infinite-dimensional diffusion bridges.
\newblock \emph{arXiv preprint arXiv:2503.13177}, 2025{\natexlab{b}}.

\bibitem[Potthast and beim Graben(2009)]{potthastInverseProblemsNeural2009}
Roland Potthast and Peter beim Graben.
\newblock Inverse {{Problems}} in {{Neural Field Theory}}.
\newblock \emph{SIAM Journal on Applied Dynamical Systems}, 8\penalty0
  (4):\penalty0 1405--1433, January 2009.
\newblock ISSN 1536-0040.
\newblock \doi{10.1137/080731220}.

\bibitem[Reich and Cotter(2015)]{Reich2015}
Sebastian Reich and Colin Cotter.
\newblock \emph{Probabilistic Forecasting and Bayesian Data Assimilation}.
\newblock Cambridge University Press. Cambridge University Press, May 2015.
\newblock ISBN 1-316-29942-2.

\bibitem[{Sanz-Alonso} et~al.(2023){Sanz-Alonso}, Stuart, and
  Taeb]{Sanz-alonsoInverseProblemsData2023}
Daniel {Sanz-Alonso}, Andrew Stuart, and Armeen Taeb.
\newblock \emph{Inverse {{Problems}} and {{Data Assimilation}}}.
\newblock Cambridge University Press, 1 edition, July 2023.
\newblock ISBN 978-1-009-41431-9 978-1-009-41432-6 978-1-009-41429-6.
\newblock \doi{10.1017/9781009414319}.

\bibitem[Sarkka et~al.(2013)Sarkka, Solin, and
  Hartikainen]{Sarkka2013Spatiotemporal}
Simo Sarkka, Arno Solin, and Jouni Hartikainen.
\newblock Spatiotemporal learning via infinite-dimensional bayesian filtering
  and smoothing: A look at gaussian process regression through kalman
  filtering.
\newblock \emph{IEEE Signal Processing Magazine}, 30\penalty0 (4):\penalty0
  51--61, 2013.

\bibitem[Sauer and Schiff(2009)]{sauerDataAssimilationHeterogeneous2009}
Timothy~D. Sauer and Steven~J. Schiff.
\newblock Data assimilation for heterogeneous networks: {{The}} consensus set.
\newblock \emph{Physical Review E}, 79\penalty0 (5):\penalty0 051909, May 2009.
\newblock ISSN 1539-3755, 1550-2376.
\newblock \doi{10.1103/PhysRevE.79.051909}.

\bibitem[Schauer et~al.(2017)Schauer, van~der Meulen, and van
  Zanten]{schauer2017guided}
Moritz Schauer, Frank van~der Meulen, and Harry van Zanten.
\newblock {Guided proposals for simulating multi-dimensional diffusion
  bridges}.
\newblock \emph{Bernoulli}, 23\penalty0 (4A):\penalty0 2917 -- 2950, 2017.
\newblock \doi{10.3150/16-BEJ833}.

\bibitem[Schiff and Sauer(2007)]{Schiff2007Kalman}
Steven~J Schiff and Tim Sauer.
\newblock Kalman filter control of a model of spatiotemporal cortical dynamics.
\newblock \emph{Journal of neural engineering}, 5\penalty0 (1):\penalty0 1,
  2007.

\bibitem[Sigrist et~al.(2015)Sigrist, Künsch, and Stahel]{sigrist2015spde}
Fabio Sigrist, Hans~R. Künsch, and Werner~A. Stahel.
\newblock An spde-based spatio-temporal model for large data sets with an
  application to postprocessing precipitation forecasts.
\newblock \emph{Journal of the Royal Statistical Society: Series C (Applied
  Statistics)}, 64\penalty0 (2):\penalty0 371--392, 2015.
\newblock \doi{10.1111/rssc.12078}.

\bibitem[Singh et~al.(2025)Singh, Hope-Collins, Cotter, and
  Crisan]{Singh2025Dataassimilationusingglobal}
Maneesh~Kumar Singh, Joshua Hope-Collins, Colin~J. Cotter, and Dan Crisan.
\newblock Data assimilation using a global girsanov nudged particle filter,
  2025.
\newblock URL \url{https://arxiv.org/abs/2507.17685}.

\bibitem[Smith(2014)]{smithUncertaintyQuantificationTheory2014}
Ralph~C. Smith.
\newblock \emph{Uncertainty Quantification: Theory, Implementation, and
  Applications}.
\newblock Number~12 in Computational Science \& Engineering. {siam, Society for
  Industrial and Applied Mathematics}, Philadelphia, 2014.
\newblock ISBN 978-1-61197-321-1.

\bibitem[Stuart(2010)]{Stuart2010Inverse}
Andrew~M Stuart.
\newblock Inverse problems: a bayesian perspective.
\newblock \emph{Acta numerica}, 19:\penalty0 451--559, 2010.

\bibitem[van~der Meulen et~al.(2025)van~der Meulen, Schauer, and
  Sommer]{vdMeulenSchauerSommer2025}
Frank van~der Meulen, Moritz Schauer, and Stefan Sommer.
\newblock Backward filtering forward guiding, 2025.
\newblock arXiv preprint.

\bibitem[Volpert(2014)]{volpertEllipticPartialDifferential2014}
Vitaly Volpert.
\newblock \emph{Elliptic {{Partial Differential Equations}}. {{Volume}}: 2
  {{Reaction-Diffusion Equations}}}, volume 104 of \emph{Monographs in
  {{Mathematics}}}.
\newblock Springer Basel, Basel, 2014.
\newblock ISBN 978-3-0348-0812-5 978-3-0348-0813-2.
\newblock \doi{10.1007/978-3-0348-0813-2}.

\bibitem[Wan and Van Der~Merwe(2000)]{Wan2000Unscented}
Eric~A Wan and Rudolph Van Der~Merwe.
\newblock The unscented kalman filter for nonlinear estimation.
\newblock In \emph{Proceedings of the IEEE 2000 adaptive systems for signal
  processing, communications, and control symposium (Cat. No. 00EX373)}, pages
  153--158. Ieee, 2000.

\bibitem[Wikle et~al.(2019)Wikle, Zammit-Mangion, and Cressie]{Wikle2019Spatio}
Christopher~K Wikle, Andrew Zammit-Mangion, and Noel Cressie.
\newblock \emph{Spatio-temporal statistics with R}.
\newblock Chapman and Hall/CRC, 2019.

\bibitem[Wilson and Cowan(1973)]{wilson1973mathematical}
Hugh~R Wilson and Jack~D Cowan.
\newblock A mathematical theory of the functional dynamics of cortical and
  thalamic nervous tissue.
\newblock \emph{Kybernetik}, 13\penalty0 (2):\penalty0 55--80, 1973.

\bibitem[Yang et~al.(2024)Yang, Baker, Severinsen, Hipsley, and
  Sommer]{Yang2024Simulating}
Gefan Yang, Elizabeth~Louise Baker, Michael~L. Severinsen, Christy~Anna
  Hipsley, and Stefan Sommer.
\newblock Simulating infinite-dimensional nonlinear diffusion bridges, 2024.
\newblock URL \url{https://arxiv.org/abs/2405.18353}.

\bibitem[Zaitsev and Polyanin(2002)]{Zaitsev2002Handbook}
Valentin~F Zaitsev and Andrei~D Polyanin.
\newblock \emph{Handbook of exact solutions for ordinary differential
  equations}.
\newblock Chapman and Hall/CRC, 2002.

\end{thebibliography}

\end{document}